\def\natural{{\mathbb N}}
\def\zahlen{{\mathbb Z}}
\def\real{{\mathbb{R}}}
\def\rational{{\mathbb Q}} 
\def\ball#1,#2.{B(#1,#2)} 
\def\biglip{\text{\normalfont Lip}} 
\def\smllip{\text{\normalfont lip}} 
\def\clball#1,#2.{\bar B(#1,#2)} 
\def\sball#1,#2,#3.{B_{#1}(#2,#3)} 
\def\scball#1,#2,#3.{\bar B_{#1}(#2,#3)} 
\def\glip#1.{{\bf L}(#1)} 
\def\glipdec#1,#2.{{\bf L}_{#2}(#1)}
\def\pcreatedst#1#2{\expandafter\def\csname #1dst\endcsname##1,##2.{#2(##1,##2)}
\expandafter\def\csname #1dstp\endcsname##1.{{#2}_{##1}}
\expandafter\def\csname #1dname\endcsname{#2}} 
\def\pcreatenrm#1#2#3{\expandafter\def\csname
  #1nrm\endcsname##1.{\left #3 ##1\right #3_{#2}} \expandafter\def\csname
  #1nrmname\endcsname{\left #3\,\cdot\,\right #3_{#2}}} 
\def\pcreateasymnrm#1#2#3#4{\expandafter\def\csname
  #1nrm\endcsname##1.{\left #3 ##1\right #4_{#2}} \expandafter\def\csname
  #1nrmname\endcsname{\left #3\,\cdot\,\right #4_{#2}}} 
\def\wder#1.{{\mathscr{X}}({#1})}
\def\wform#1.{{\mathscr{E}}({#1})}
\newcommand{\dist}{\operatorname{dist}}
\def\mpush#1.{{#1}_{\sharp}} 
\newcommand{\tang}{\operatorname{Bw-up}}
\def\lipalg#1.{{\rm Lip}_{\text{\normalfont b}}(#1)} 
\def\albrep#1.{{\mathcal A}_{#1}} 
\def\ccompact#1.{{C_c(#1)}}
\newcommand{\diam}{\operatorname{diam}}
\newcommand{\on}{\:\mbox{\rule{0.1ex}{1.2ex}\rule{1.1ex}{0.1ex}}\:}
\DeclareMathOperator\frags{Frag} 
\DeclareMathOperator\rescf{Res} 
\DeclareMathOperator\dom{dom} 
\def\lebmeas#1.{\setbox1=\hbox{$#1$\unskip}{\mathcal L}^{\ifdim\wd1>0pt
    #1 \else 1 \fi}} 
\DeclareMathOperator\metdiff{md} 
\DeclareMathOperator\curves{Curves}
\DeclareMathOperator\acspace{AC}
\DeclareMathOperator\spt{spt} 
\def\locnorm#1,#2.{\left|{#1}\right|_{{#2},\text{\normalfont loc}}}
\def\lipfun#1.{{\rm Lip}(#1)} 
\def\pcreateconv#1#2{\expandafter\def\csname
  #1cvj\endcsname{\xrightarrow{#2}}} 
\def\makeunderscoreletter{\catcode`_=11}
\def\makecolonletter{\catcode`:=11}
\def\unmakecolonletter{\catcode`:=12}
\def\makeunderscoresub{\catcode`_=8}
\def\sync #1.{\makeunderscoreletter\makecolonletter\expandafter\ifx\csname
  sync#1\endcsname\relax Undefined\else \csname
  sync#1\endcsname\fi\makeunderscoresub\unmakecolonletter}
\def\syncrem:derivation_extension{
  2.115}\makeunderscoresub\unmakecolonletter
\def\synccor:derbound{ 5.136}\makeunderscoresub\unmakecolonletter
\def\syncthm:alb_derivation{ 3.11}\makeunderscoresub\unmakecolonletter
\def\syncsec_7_cks{ 7}\makeunderscoresub\unmakecolonletter
\def\synceq:cks_reg_dec{(7.34)}\makeunderscoresub\unmakecolonletter
\def\synclem:cks_reg_dec_borel{ 7.35}\makeunderscoresub\unmakecolonletter
\def\synceq:alberti_blow_up_psix{(7.55)}\makeunderscoresub\unmakecolonletter
\def\synceq:cks_alberti_blow_up_esttwo{(7.60)}\makeunderscoresub\unmakecolonletter
\def\synceq:cks_alberti_blow_up_estthree{(7.66)}\makeunderscoresub\unmakecolonletter
\def\synceq:cks_alberti_blow_up_pelevenplusone{(7.71)}\makeunderscoresub\unmakecolonletter
\def\synclem:local_alberti_blow_up{
   7.78}\makeunderscoresub\unmakecolonletter
\def\synclem:der_alb_gap_flat_func{
  4.17}\makeunderscoresub\unmakecolonletter
\numberwithin{equation}{section} 
\theoremstyle{plain}
\newtheorem{lem}[equation]{Lemma}
\newtheorem{thm}[equation]{Theorem}
\newtheorem{cor}[equation]{Corollary}
\theoremstyle{definition}
\newtheorem{defn}[equation]{Definition}
\theoremstyle{remark}
\newtheorem{rem}[equation]{Remark}
\begin{document}
\title{The L\MakeLowercase{ip}-\MakeLowercase{lip} equality is stable under blow-up}
\author{Andrea Schioppa}
\address{ETHZ}
\email{andrea.schioppa@math.ethz.ch}
\thanks{The author was supported by the ``ETH Zurich Postdoctoral Fellowship Program and the Marie Curie Actions
  for People COFUND Program'' and the European Research Council grant
  n.291497}
\keywords{derivation, differentiation, Lipschitz functions, Sobolev spaces}
\subjclass[2010]{53C23, 46J15, 58C20, 31E05}
\begin{abstract}
  We show that at generic points blow-ups/tangents of
  differentiability spaces are still differentiability spaces; this
  implies that an analytic condition introduced by Keith as an
  inequality (and later proved to actually be an equality) passes to
  tangents. As an application, we characterize the $p$-weak gradient
  on iterated blow-ups of differentiability spaces.
\end{abstract}
\maketitle
\tableofcontents
\section{Introduction}
\label{sec:intro}
\subsection*{Background}
\label{subsec:background}
The study of the geometric and analytic properties of metric spaces is
a topic which has grown into many different trends, and is probably as
old as the study of fractal subsets of $\real^n$ and of Carnot
groups. In the last 15 years, a trend which has attracted growing
interest is the study of metric measure spaces which admit an abstract
Poincar\'e inequality in the sense of \cite{heinonen98},
and which we will call PI-spaces.
\par Intuitively, PI-spaces allow to formulate notions of first-order
calculus, an intuition that was made more precise when
\cite{cheeger99} proved that PI-spaces satisfy a generalized
version of the classical Rademacher Theorem about the
a.e.~differentiability of real-valued Lipschitz functions. In particular,
Cheeger's result allows to associate to a PI-space $(X,\mu)$
$\mu$-measurable tangent / cotangent bundles $TX$ / $T^*X$, the
fibres of $T^*X$ being generated by ``differentials'' of Lipschitz
functions. It is worth to point out that Cheeger's generalization of
Rademacher's Theorem does not put $TX$ and $T^*X$ on an equal footing,
e.g.~derivatives are not explicitly constructed and are not related to
differentiation along Lipschitz curves. Essentially, differentiability
is formulated in terms of finite-dimensionality results for certain
spaces of (asymptotically) harmonic functions. Note that even though
this approach might look at first counterintuitive, it fits with the
idea that coordinate functions generate the cotangent bundle of a
Riemannian manifold as other functions admit a
first-order Taylor expansion with respect to the coordinates, and
moreover in Riemannian geometry there are
important results on finite-dimensionality of spaces of harmonic
functions whose proofs share some similarities with Cheeger's argument (see for instance
\cite{colding_harmonic}).
\par Today metric measure spaces which satisfy the conclusion
of Cheeger's Differentiation Theorem are either said to admit a
(strong) measurable differentiable structure \cite{keith_thesis,keith04},
or to be (Lipschitz) differentiability spaces \cite{bate-diff,bate_thesis_final};
in the following we will use the latter terminology.
\par In his PhD thesis, Keith \cite{keith_thesis,keith04} introduced a new
analytic condition, the Lip-lip inequality, and proved that doubling
metric measure spaces $(X,\mu)$ satisfying it are differentiability
spaces. It seems that the idea of ``generalizing'' Cheeger's
Differentiation Theorem using a Lip-lip inequality stems from the fact
that in PI-spaces Cheeger had proven a Lip-lip equality. We use
``generalize'' to refer to Keith's work because as of today there seem
to be no examples of differentiability spaces which cannot be
partitioned into countable unions of subsets, each of which admits a
measure-preserving biLipschitz embedding into some PI-space.
\par Besides providing a theoretical framework for first-order
calculus, the idea of differentiating Lipschitz functions has proven useful in the study of metric
embeddings $F:X\to B$ where $B$ is a Banach space, in particular when
either $B$ has the Radon-Nikodym property (i.e.~an RNP Banach space) (see for instance\cite{cheeger_kleiner_radon}), or
when $B=L^1$ (see for instance
\cite{cheeger_kleiner_L1_BV,cheeger_kleiner_metricdiff_monotone,cheeger_kleiner_naor,cheeger_inverse_l1}).
In connection with embeddings into
RNP-Banach spaces, Cheeger and Kleiner \cite{cheeger_kleiner_radon} showed that if $(X,\mu)$ is a
PI-space the fibres of $TX$ are spanned by ``tangent vectors'' to
Lipschitz curves. Putting $TX$ and $T^*X$ on a complete equal footing
has required substantial effort: Bate's beautiful work
\cite{bate-diff,bate_thesis_final} on Alberti representations
in differentiability spaces, which was partly motivated by a deep
structure theory for measures and sets in $\real^n$ developed by
Alberti, Cs{\"o}rnyei and Preiss
\cite{acp_plane,acp_proceedings}, and the formulation of metric differentiation
for differentiability spaces \cite{cks_metric_diff}, which was partly
motivated by unpublished results of Cheeger and Kleiner on metric differentiation
in PI-spaces, and unpublished results of mine on prescribing the norms on
$TX$ and $T^*X$.
\par Strikingly, Bate \cite{bate-diff,bate_thesis_final} was able to show that each differentiability
space $(X,\mu)$ can be partitioned into countably many pieces, each of
which is a doubling metric measure space admitting a Lip-lip
inequality. Later we showed \cite{schioppa_thesis,deralb} that the Lip-lip
inequality always self-improves to an equality (see also
\cite{cks_metric_diff} for another argument); this might be interpreted
as saying that the Lip-lip
equality provides an asymptotically quantitative characterization of
differentiability spaces; however, there is a more precise result in
terms of the quantitative characterization of the local norm for
Weaver forms (Theorem~\ref{thm:char_loc_norm}) which will be used in
this paper.
\par In connection with these results, a topic of major interest is
trying to understand the infinitesimal structure of differentiability,
and even PI-spaces. It should be pointed out that, as of today, the
set of known models for the infinitesimal geometry of PI-spaces is
rather limited, and is thus hard to come up with ``plausible''
conjectures. For example, while the results in \cite{cks_metric_diff}
show structural similarities between PI-spaces and differentiability
spaces (e.g.~one might conjecture that tangents/blow-ups of
differentiability spaces are PI spaces), the examples
constructed in \cite{schioppa_necks} suggest that there might
be differentiability spaces $(X,\mu)$ whose blow-ups are never
PI-spaces; moreover, this phenomenon might even depend on the measure
class of $\mu$. While finishing this paper, we have learned from Bate
and Li \cite{bate_rnp_new_definition} that they have studied the class
of differentiability spaces for which
differentiation of RNP-valued functions (in addition to that of
real-valued ones) holds, and have characterized
them in terms of an ``infinitesimal accessibility'' condition.
One possibility is that differentiability implies
RNP-differentiability, and then extending the results of
\cite{cheeger_kleiner_radon} to general differentiability
spaces might give a route to answer some questions raised in
\cite{cks_metric_diff}. Another possibility is that differentiability
spaces are organized in a sort of ``hierarchy'' depending on which
Banach-valued Lipschitz maps are differentiable. For instance, the
argument in \cite{bate_rnp_new_definition} uses crucially
differentiability in the $l_1$-sum of a sequence of finite-dimensional
spaces $\{l_\infty^{n_k}\}_k$ where $n_k\nearrow\infty$, and it does not seem clear
how one would recover the result in \cite{bate_rnp_new_definition} assuming, say,
differentiability of $l_2$-valued maps.
\subsection*{Results}
\label{subsec:results}
The main result of this paper states that if $(X,\mu)$ is a
differentiability space, then at $\mu$-a.e.~$x\in X$ any blow-up
$(Y_x,\nu_x)$ of $(X,\mu)$ is still a differentiability space. The
precise statement is Theorem~\ref{thm:blow_up_diff}.
\par The question of whether blow-ups of differentiability spaces are
still differentiability spaces has been around since Keith introduced
the Lip-lip inequality. This question should be compared with the
easier case where $(X,\mu)$ is a PI-space: then any blow-up is still
a PI-space. This follows from the stability of the Poincar\'e
inequality (with uniform constants) under measured Gromov-Hausdorff
convergence, which can be seen using Keith's elegant characterization
of PI-spaces in terms of moduli of families of curves \cite{keith-modulus}. Essentially,
the argument reduces to the upper-semicontinuity of modulus (which is
dual to the lower-semicontinuity of length). On the
other hand, in the category of differentiability spaces one cannot
expect all blow-ups to be differentiability spaces. For example, any
subset $S\subset\real^n$ of positive Lebesgue measure is a
differentiability space (restricting the Lebesgue measure), but
blow-ups are differentiability spaces (explicitly, copies of $\real^n$
with Lebesgue measure) only at generic points. Moreover, at the moment
there is no ``good geometric'' characterization of differentiability
spaces that one might pass to the limit; one has to work directly with
the definition of a differentiability space by showing that if at
$\mu$-a.e.~point $x$ there is some tangent $(Y_x,\nu_x)$ which is not
a differentiability space, then $(X,\mu)$ is not a differentiability
space. This can be done from two equivalent points of view:
\cite[Sec.~4]{bate-diff} choosing a candidate chart and producing a
non-differentiable function for that chart, \cite[Sec.~5.3]{deralb}
showing that the space of ``germs'' of Lipschitz functions is
infinite-dimensional.
\par The methodology that we propose to tackle this problem has two
components: ``lifting'' and ``gluing''. Lifting means to find a
suitable ``bad'' function $g_x$ on a blow-up $(Y_x,\nu_x)$, and then
lift it to a bad function $f_x$ on $(X,\mu)$. Unfortunately, $f_x$
will be bad only at some locations near $x$ and at some scales (which
can be small but are bounded away from $0$); thus it is then necessary to glue
several $f_x$'s together. It is worth to point out that lifting
depends on the structure theory for Weaver derivations developed in
\cite{schioppa_thesis,deralb}. In fact, $(Y_x,\nu_x)$ is not a
differentiability space, but there is still a decent theory (in
particular finite-dimensionality) for the $L^\infty$-modules of Weaver
derivations and forms. Another key ingredient in lifting is Preiss'
phenomenon (Theorem~\ref{thm:blow_ups_are_blow_ups}) that shifted
rescalings of blow-ups are still blow-ups.
\par Gluing consists in combining several $f_x$'s together. The
proposed approach is based on the idea of ``tile'' (see
Section~\ref{sec:loc_app}): for each $x$ one has to produce several
$f_x$'s at several scales converging to $0$, and then one applies
the Vitali Covering Lemma to join the pieces together. Intuitively, we
are using measure theory (e.g.~the measure-theoretic statement that
$(X,\mu)$ is finite-dimensional) to select the pieces that can be
glued together; in some sense, the construction can't be
``deterministic'' because Lipschitz functions are rather rigid and
$(X,\mu)$ is a generic doubling metric measure space. 
\par We mention some further directions of research. We  are able to
show that for iterated blow-ups of differentiability spaces the
analytic dimension does not increase (see Lemma~\ref{lem:regular_diff}). It seems plausible to
conjecture that one does not need to take iterated blow-ups: at the
moment we  are able to prove this only under additional assumptions on
$(X,\mu)$ (e.g.~when $(X,\mu)$ biLipschitz embedds in Carnot groups or
``nice'' Banach spaces), and thus do not include these partial results
here. Essentially, there seems to be a technical
obstacle in directly applying the ``lifting'' method in the form
proposed here. This question is also related to wheteher the results
in \cite{cheeger_kleiner_radon} extend to general differentiability spaces,
and on Lusin-like properties for Weaver derivations that will be
discussed elsewhere.
\par As an application, we obtain a characterization of the $p$-weak
gradient of a Lipschitz function $f$ on regular differentiability
spaces (Definition~\ref{defn:regular_diff}) which arise as iterated
blow-ups of differentiability spaces.
\par This is related to another trend in analysis on metric spaces,
where people have tried to formulate generalizations of Sobolev/BV
spaces and gradients. One possible approach starts with the idea
\cite{heinonen98} of an upper gradient for a function $f$, which is
essentially an upper bound on the norm of ``what the gradient of $f$
should be''. Then there have been several proposals for ``what the norm
of the gradient should be'': a variational one due to
\cite{cheeger99}, one motivated by quasiconformal geometry and
moduli of curves due to Koskela, MacManus and Shanmugalingam \cite{kosk_mac_quasiconf_sob,shanmugalingam_newton}, and
two motivated by optimal transport due to
Ambrosio, Gigli and Savar\'e \cite{ambrosio_weakgrads}. Optimal transport allows to show
\cite{ambrosio_weakgrads} that all these approaches are equal under mild
assumptions on $(X,\mu)$, e.g.~assuming $X$ to be complete and
separable, and $\mu$ to be finite on bounded sets. In this paper we
work with the $p$-weak gradient of \cite{ambrosio_weakgrads} because it is
easier to relate to the existence of Alberti representations using the
notion of test plan.
\par Asking for a characterization of the $p$-weak gradient on
differentiability spaces is not a good question because positive
measure subsets of differentiability spaces are still
differentiability spaces, making the notion of $p$-weak gradient often
vacuous (e.g.~considering a Vitali-Cantor set). However, an
interesting question is to ask for a characterization of the $p$-weak
gradient on blow-ups of differentiability spaces. We show
(Theorem~\ref{thm:weak_grad_indep}) that for regular differentiability
spaces the $p$-weak gradient of a Lipschitz function $f$ coincides
with the asymptotic Lipschitz constant, and is hence independent of
the exponent $p$: this generalizes a previous result
\cite{cheeger99} for PI-spaces. Note however, that for
PI-spaces the $p$-weak gradient coincides with the asymptotic
Lipschitz constant only if $p$ belongs to the range of exponents for
which the Poincar\'e inequality holds: this has been shown by recent
examples \cite{mar_spe_grad_depends}. This is not the case for regular
differentiability spaces: the $p$-weak gradient does not depend on
$p$. One can explain the examples \cite{mar_spe_grad_depends} in terms of
putting ``bad weights'' on the lines corresponding to Alberti
representations, but these bad weights become again ``nice'' by
passing to tangent measures. We point out that
Theorem~\ref{thm:weak_grad_indep} has an analogue in the BV category,
see Remark~\ref{rem:bv} (see \cite{ambrosio_bvgrads} for extensions of
\cite{ambrosio_weakgrads} to the BV category).
\par As a historical note, we point out that the notion of regular
differentiability space refines and generalizes the notion of
generalized Minkowski space in \cite[Sec.~11]{cheeger99}.
\subsection*{Organization}
\label{subsec:organization}
In Section~\ref{sec:background} we discuss background material on
Weaver derivations, Alberti representations, my PhD thesis,
differentiability spaces and the $p$-weak gradient. The presentation
is a bit brisk, so we invite the interested reader to consult the
references therein.
\par In Section~\ref{sec:blow_weaver} we first
discuss (subsection~\ref{subsec:mGHconvs}) variants of Gromov-Hausdorff
convergence, the purpose being mainly to establish some
terminology. The substantial result from this subsection that we will
use is the aforementioned Preiss'
phenomenon, Theorem~\ref{thm:blow_ups_are_blow_ups}
\cite{preiss_geometry_measures,ledonne_tangents,gigli_weak_euclidean_tangs}.
We then move on
(subsection~\ref{subsec:bwp_wea_norm}) to explain how rescalings
affect the modules of derivations and forms. We conclude this section
with a generalization of a result \cite[Sec.~7]{cks_metric_diff} to the category
of Weaver derivations. We point out that this result to blow-up Weaver
derivations has other applications, e.g.~to the infinitesimal
structure of metric currents in $\real^n$, that will be discussed
elsewhere.
\par Section~\ref{sec:loc_app} contains our proposal to implement the
``gluing'' part of the argument; this is more general than what we
really use here, because we end up working with ``cubical'' tiles
\cite{christ_cubes,hytonen_cubes}. Using other geometries for tiles
might lead to a better understanding of the geometry of blow-ups; for example, Preiss and I discussed sometime
ago what are essentially ``long cylindrical'' tiles to exclude
factorizations of the form $Y\times\real^n$ in blow-ups of
differentiability spaces.
\par In subsection~\ref{subsec:diff_blow_diff} it is shown that generic
blow-ups of differentiability spaces are still differentiability
spaces by proving the contrapositive statement; the first $6$ steps of
Theorem~\ref{thm:blow_up_diff} correspond to
``lifting''. Subsection~\ref{subsec:weak_grad_indep} discusses  the
characterization of the $p$-weak gradient on regular differentiability
spaces; the key step is Lemma~\ref{lem:test_plan_arep} which
associates test plans to ``nice'' Alberti representations.
\subsection*{Notational conventions}
\label{subsec:notation}
We use the convention $a\approx b$ to say that $a/b,b/a\in[C^{-1},C]$
where $C$ is a universal constant; when we want to highlight $C$ we
write $a\approx_Cb$. We similarly use notations like $a\lesssim b$ and
$a\gtrsim_Cb$.
\subsection*{Acknowledgements}
\label{subsec:ackwn}
This work would have not been possible without the many conversations
that I had with David Preiss, who generously invited me to visit the
University of Warwick; I also wish to thank the people in the analysis group,
in particular Daniel Seco, for the hospitality I received.
\par I also thank the anonymous referee for reading the manuscript
very carefully and for pointing out an issue with the way measures
were normalized in the first version of the preprint.
\section{Background Material}
\label{sec:background}
\subsection{Weaver derivations}
\label{subsec:derivations}
\par For more information we refer the reader to \cite{weaver00}.
An \textbf{$L^\infty(\mu)$-module} $M$ is a Banach space $M$ which
is also an $L^\infty(\mu)$-module and such that for all
$(m,\lambda)\in M\times L^\infty(\mu)$ one has:
\begin{equation}
  \label{eq:boundedaction}
  \|\lambda m\|_M\le\|\lambda\|_{L^\infty(\mu)}\,\|m\|_M.
\end{equation}
Among $L^\infty(\mu)$-modules a special r\^ole is played by
\textbf{$L^\infty(\mu)$-normed modules}:
\begin{defn}[Normed modules]
  \label{defn:local_norm}
  An $L^\infty(\mu)$-module $M$ is said to be an
  \textbf{$L^\infty(\mu)$-normed module} if there is a map
  \begin{equation}
    |\cdot|_{M,{\text{loc}}}:M\to L^\infty(\mu) 
  \end{equation}
such that:
\begin{enumerate}
\item For each $m\in M$ one has $|m|_{M,{\text{loc}}}\ge0$;
\item For all
  $c_1,c_2\in\real$ and $m_1,m_2\in M$ one has:
  \begin{equation}
        |c_1m_1+c_2m_2|_{M,{\text{loc}}}\le|c_1||m_1|_{M,{\text{loc}}}+|c_2||m_2|_{M,{\text{loc}}};
  \end{equation}
\item For each $\lambda\in L^\infty(\mu)$ and each $m\in M$, one has:
  \begin{equation}
    |\lambda m|_{M,{\text{loc}}}=|\lambda|\,|m|_{M,{\text{loc}}};
  \end{equation}
\item The local seminorm $|\cdot|_{M,{\text{loc}}}$ can be used to
  reconstruct the norm of any $m\in M$:
  \begin{equation}
    \|m\|_M=\|\,|m|_{M,{\text{loc}}}\,\|_{L^\infty(\mu)}.
  \end{equation}
\end{enumerate}
\end{defn}
\begin{defn}[Weaver derivation]
  \label{defn:derivations}
    A \textbf{derivation $D:\lipalg X.\to L^\infty(\mu)$} is a weak*
    continuous, bounded linear map satisfying the product rule:
    \begin{equation}
      D(fg)=fDg+gDf.
    \end{equation}
  \end{defn}
  \par Note that the product rule implies that $Df=0$ if $f$ is
  constant. The collection of all derivations $\wder\mu.$ is an
  $L^\infty(\mu)$-normed module
  \cite[Thm.~2]{weaver00}
  and the corresponding  local norm will be denoted by
$\locnorm\,\cdot\,,{\wder\mu.}.$. Note also that $\wder\mu.$ depends
only on the measure class of $\mu$.
\begin{rem}
  \label{rem:restr}
  Consider a Borel set $U\subset X$ and a derivation
  $D\in\wder\mu\on U.$. The derivation $D$ can be also regarded as
  an element of $\wder\mu.$ by extending $Df$ to be $0$ on $X\setminus
  U$. In particular,
  the module $\wder\mu\on U.$ can be naturally identified with the
  submodule $\chi_U\wder\mu.$ of $\wder\mu.$.
\end{rem}
\par Derivations are local in the following sense
\cite[Lem.~27]{weaver00}):
\begin{lem}[Locality of Derivations]
  \label{lem:locality_derivations}
  If $U$ is $\mu$-measurable and if $f,g\in\lipalg X.$ agree on $U$,
  then for each $D\in\wder\mu.$, $\chi_UDf=\chi_UDg$.
\end{lem}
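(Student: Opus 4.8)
The plan is to reduce to a single function and then exploit the product rule through a chain rule. Writing $h = f - g \in \lipalg X.$, by linearity of $D$ we have $\chi_U Df - \chi_U Dg = \chi_U Dh$, so the claim is equivalent to $\chi_U Dh = 0$ whenever $h|_U = 0$. Since $U \subseteq \{h = 0\}$ and $\chi_U = \chi_U \chi_{\{h=0\}}$, it is in fact enough to prove the sharper statement $\chi_{\{h=0\}} Dh = 0$; here $\{h = 0\}$ is Borel because $h$ is continuous. Thus the whole lemma comes down to showing that $Dh$ vanishes $\mu$-a.e.\ on the zero set of $h$.

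The engine of the argument is a chain rule: for every $\phi \in C^1(\real)$ one has $D(\phi \circ h) = (\phi' \circ h)\, Dh$. First I would prove this for polynomials $p$: the product rule gives $D(h^{k+1}) = (k+1) h^k Dh$ by induction, and linearity upgrades this to $D(p \circ h) = (p' \circ h) Dh$. For general $\phi \in C^1$ I would note that $h$ is bounded, say with range in $[-M,M]$, approximate $\phi'$ uniformly on $[-M,M]$ by polynomials $q_n$ (Weierstrass), and set $p_n(t) = \phi(0) + \int_0^t q_n$. Then $p_n \to \phi$ and $p_n' = q_n \to \phi'$ uniformly on $[-M,M]$. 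Consequently $p_n \circ h \to \phi \circ h$ with $\sup_n \operatorname{Lip}(p_n \circ h) \le \operatorname{Lip}(h)\sup_n\|q_n\|_{\infty,[-M,M]} < \infty$ and pointwise convergence, hence weak* in $\lipalg X.$; by weak* continuity of $D$, $D(p_n \circ h) \to D(\phi \circ h)$ weak* in $L^\infty(\mu)$. On the other hand $D(p_n \circ h) = (p_n' \circ h) Dh \to (\phi' \circ h) Dh$ in $L^\infty(\mu)$-norm, since $\|(p_n' \circ h - \phi' \circ h)Dh\|_\infty \le \|q_n - \phi'\|_{\infty,[-M,M]}\|Dh\|_\infty$. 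Matching the two limits yields the chain rule.

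With the chain rule in hand I would select cutoffs that kill $Dh$ on $\{h=0\}$. Fix $\phi_n \in C^1(\real)$ odd, with $0 \le \phi_n' \le 1$, $\phi_n' \equiv 0$ on $[-\tfrac{1}{2n},\tfrac{1}{2n}]$ and $\phi_n' \equiv 1$ outside $[-\tfrac1n,\tfrac1n]$; then $\phi_n(0)=0$, $\operatorname{Lip}(\phi_n)\le 1$, and $\phi_n(t)\to t$ uniformly. Hence $\phi_n \circ h \to h$ weak* and $D(\phi_n \circ h) \to Dh$ weak* in $L^\infty(\mu)$. But $\phi_n'$ vanishes near $0$, so $\chi_{\{h=0\}}(\phi_n' \circ h) = \chi_{\{h=0\}}\phi_n'(0) = 0$, giving $\chi_{\{h=0\}} D(\phi_n \circ h) = \chi_{\{h=0\}}(\phi_n'\circ h)Dh = 0$ for every $n$. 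Since multiplication by the fixed element $\chi_{\{h=0\}} \in L^\infty(\mu)$ is weak* continuous on $L^\infty(\mu)$, passing to the limit gives $\chi_{\{h=0\}} Dh = 0$, which is what we wanted.

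The main obstacle is the chain rule rather than the final cutoff step: the product rule only delivers composition with polynomials directly, and extending to $C^1$ functions is exactly where the weak* continuity of $D$ and the structure of the weak* topology on $\lipalg X.$ are used. Concretely, the one point requiring care is the claim that a uniformly Lipschitz, pointwise convergent sequence converges weak*; I would invoke the characterization of weak* convergence in $\lipalg X.$ as boundedness together with pointwise convergence. Everything else --- the Leibniz induction, the $C^1$ approximation, and the norm estimate for $(\phi_n'\circ h)Dh$ --- is routine once that topological fact is available.
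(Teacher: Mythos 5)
Your proof is correct: the reduction to $h=f-g$ and to the zero set $\{h=0\}$ is sound, the polynomial-to-$C^1$ chain rule via Weierstrass approximation is carried out properly (weak* continuity of $D$ plus norm convergence of $(p_n'\circ h)Dh$ identifies the two limits, and weak* limits in $L^\infty(\mu)$ are unique), and the final cutoff-and-limit step legitimately uses that multiplication by a fixed $\chi_{\{h=0\}}\in L^\infty(\mu)$ is weak*-continuous, being the adjoint of multiplication on $L^1(\mu)$. Note, however, that the paper does not prove this lemma at all: it cites Weaver's Lemma~27, so the comparison is with Weaver's argument, which is genuinely shorter than yours. The standard route avoids the chain rule entirely: with $h|_U=0$, set $h_\varepsilon=\left((h-\varepsilon)\vee 0\right)+\left((h+\varepsilon)\wedge 0\right)$ and choose a Lipschitz $k_\varepsilon$ with $0\le k_\varepsilon\le 1$, $k_\varepsilon=1$ on $\{|h|\ge\varepsilon\}$ and $k_\varepsilon=0$ on $\{|h|\le\varepsilon/2\}$; then $h_\varepsilon=k_\varepsilon h_\varepsilon$, so the product rule gives $Dh_\varepsilon=k_\varepsilon Dh_\varepsilon+h_\varepsilon Dk_\varepsilon$, whence $\chi_U Dh_\varepsilon=0$ since both $k_\varepsilon$ and $h_\varepsilon$ vanish on $U$, and letting $\varepsilon\to 0$ (the $h_\varepsilon$ converge to $h$ pointwise with uniformly bounded Lipschitz constants) yields $\chi_U Dh=0$. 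So the trade-off is: your detour through the $C^1$ functional calculus $D(\phi\circ h)=(\phi'\circ h)Dh$ costs a Weierstrass approximation but produces a reusable chain rule of independent interest, while the factorization trick gets locality from the Leibniz rule and weak* continuity alone.
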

Note that locality allows to extend the action of derivations on
Lipschitz functions so that if $f\in\lipfun X.$ and $D\in\wder\mu.$,
$Df$ is well-defined (see
Remark \sync rem:derivation_extension.\  in
                                \cite{deralb}).
We now
pass to consider some algebraic properties of $\wder\mu.$.
\par In general, even if the module $\wder\mu.$ is finitely generated,
it is not free. Nevertheless, it is possible to obtain a decomposition
into free modules over \emph{smaller rings} \cite{weaver00,derivdiff}
:
\begin{thm}[Free Decomposition]
  \label{thm:free_dec}
  Suppose that the module $\wder\mu.$ is finitely generated with $N$ generators. Then
  there is a Borel partition $X=\bigcup_{i=0}^N X_i$ such that, if
  $\mu(X_i)>0$, then $\wder\mu\on X_i.$ is free of rank $i$ as an
  $L^\infty(\mu\on X_i)$-module. A
  basis of $\wder\mu\on X_i.$ will be called a {\normalfont\textbf{local basis of
    derivations}}.
\end{thm}
\begin{rem}
  \label{rem:index}
  In particular, Theorem~\ref{thm:free_dec} can be applied if one knows
  an upper bound on the \textbf{index} of $\wder\mu.$ which is defined
  as follow:
  \begin{multline}
    \label{eq:index_1}    
    \text{index of $\wder\mu.$} = \sup\{
    n\in\natural: \text{$\exists U$ Borel: $\wder\mu\on U.$ contains
      $n$-independent}\\ \text{elements (over $L^\infty(\mu\on U)$)
      }
    \}.
  \end{multline}
  \par In many applications in analysis on metric spaces the assumption
that $\wder\mu.$ has finite index (and is hence finitely generated) is
not restrictive: for example
it holds if either $\mu$ or $X$ are doubling  (see Corollary \sync cor:derbound.\ in
 \cite{deralb}).
\end{rem}
\par We now recall the notion of $1$-forms which are dual to
derivations.
\begin{defn}
  \label{defn:module_forms}
  The \textbf{module of $1$-forms} $\wform\mu.$ is the dual module of
  $\wder\mu.$, i.e.~it consists of the bounded module homomorphisms
  $\wder\mu.\to L^\infty(\mu)$. The module $\wform\mu.$ is an
  $L^\infty(\mu)$-normed module and the local norm will be denoted by
  $\locnorm\,\cdot\,,{\wform\mu.}.$.
  \par To each $f\in\lipalg X.$ one can associate the $1$-form
  $df\in\wform\mu.$ by letting:
  \begin{equation}
    \label{eq:module_forms1}
    \langle df,D\rangle=Df\quad(\forall D\in\wder\mu.);
  \end{equation}
  the map $d:\lipalg X.\to\wform\mu.$ is a weak* continuous $1$-Lipschitz
  linear map satisfying the product rule $d(fg)=gdf+fdg$.
\end{defn}
Note that because of Lemma \ref{lem:locality_derivations} one can
extend the domain of $d$ to $\lipfun X.$ so that if $f$ is Lipschitz,
$df$ is a well-defined element of $\wform\mu.$ and
$\|df\|_{\wform\mu.}\le\glip f.$, 
{where $\glip f.$ denotes the global
Lipschitz constant of $f$.}
\subsection{Alberti representations}
\label{subsec:alberti_reps}
Alberti representations (without this name) were introduced in
\cite{alberti_rank_one}; we invite the reader to consult
\cite{bate_thesis_final,deralb,cks_metric_diff} for more information.
\begin{defn}[Fragments and Curves]
  \label{defn:frags}
A {\bf fragment in $X$}
is a Lipschitz map $\gamma:C\to X$, where $C\subset\real$ is closed. The
set of fragments in $X$ will be denoted by $\frags(X)$. We now discuss
the topology on $\frags(X)$;  let $F(\real\times X)$ denote the set of
closed subsets of $\real\times X$ with the Fell topology
\cite[(12.7)]{kechris_desc}
; we recall that a basis of the Fell
topology consists those sets of the form:
\begin{equation}
  \label{eq:felltop}
  \left\{F\in F(\real\times X): F\cap K=\emptyset, F\cap
    U_i\ne\emptyset\;\text{for $i=1,\ldots,n$}\right\},
\end{equation}
where $K$ is a compact subset of $\real\times X$, and 
$\{U_i\}_{i=1}^n$ is a finite collection of open subsets of
$\real\times X$. Each fragment $\gamma$ can be identified with an element
of $F(\real\times X)$ and thus $\frags(X)$ will be topologized as
a subset of $F(\real\times X)$. We will use fragments to
parametrize $1$-rectifiable subsets of $X$.
\par An important subset of
$\frags(X)$ consists of the Lipschitz curves $\curves(X)$, which is the set of those $\gamma\in\frags(X)$
whose domain is a (possibly unbounded) closed subinterval of
$\real$. Given an interval $I\subset[0,1]$ we denote by $\curves(X,I)\subset\curves(X)$
the set of those $\gamma\in\curves(X)$ whose domain is contained in $I$.
\end{defn}
\begin{defn}[Alberti representations]
  \label{defn:alb_rep}
  Let $\mu$ be a Radon measure on $X$. An \textbf{Alberti representation} of
  $\mu$ is a pair $\albrep.=[Q,w]$ where $Q$ is a Radon measure on
  $\frags(X)$ and $w$ a Borel function $w:X\to[0,\infty)$ such that:
  \begin{equation}
    \label{eq:alb_rep_1}
    \mu=\int_{\frags(X)}w\cdot\gamma_{\#}(\lebmeas.\on\dom\gamma)\,dQ(\gamma),
  \end{equation}
  where the integral is interpreted in the weak* sense. We say that
  $\albrep.$ is \textbf{$C$-Lipschitz} (resp.~\textbf{$[C,D]$-biLipschitz}) if $Q$ is concentrated on the
  set of $C$-Lipschitz (resp.~$[C,D]$-biLipschitz) fragments. 
\end{defn}
\begin{defn}[Speed of fragments]
  \label{defn:met_diff}
  Let $\gamma\in\frags(X)$; then for $\lebmeas.$-a.e.~$t\in\dom\gamma$
  the limit:
  \begin{equation}
    \label{eq:met_diff_1}
    \lim_{\dom\gamma\ni t'\to t}\frac{
      d(\gamma(t'),\gamma(t))
    }
    {
      |t'-t|
    } 
  \end{equation}
  exists, is denoted by $\metdiff\gamma(t)$ and is called the
  \textbf{metric differential} of $\gamma$ at $t$.
\end{defn}
\begin{defn}[Speed of Alberti representations]
  \label{defn:alb_speed}
  Let $\albrep.=[Q,w]$ be an Alberti representation, and let
  $\sigma:X\to[0,\infty)$ be a Borel function and $f:X\to\real$ be a
  Lipschitz function. We say that $\albrep.$ has $f$-speed $\ge\sigma$
  (resp.~$\le\sigma$) if for $Q$-a.e.~$\gamma$ one has that for
  $\lebmeas.$-a.e.~$t\in\dom\gamma$:
  \begin{equation}
    \label{eq:alb_speed_1}
    (f\circ\gamma)'(t)\ge\sigma(\gamma(t))\metdiff\gamma(t)\quad(\text{resp.~$\le\sigma(\gamma(t))\metdiff\gamma(t)$}).
  \end{equation}
\end{defn}
\subsection{The correspondence between derivations and Alberti
  representations}
\label{subsec:correspondence}
We invite the reader to consult \cite{schioppa_thesis,deralb} for more
information. The following definition follows from Theorem
\sync thm:alb_derivation.\ in \cite{deralb}.
\begin{defn}[Derivation associated to an Alberti representation]
  \label{defn:alberti_to_derivation}
  Let $\albrep.=[Q,w]$ be a $C$-Lipschitz Alberti representation of a measure
  $\nu\ll\mu$; then the formula:
  \begin{multline}
    \label{eq:alberti_to_derivation_1}
    \int_X gD_{\albrep.}f\,d\nu =
    \int_{\frags(X)}dQ(\gamma)\int_{\dom\gamma}(wg)\circ\gamma(t)
    (f\circ\gamma)'(t)\,dt\\
    (\forall (g,f)\in C_c(X)\times\lipalg X.)
  \end{multline}
  defines a derivation $D_{\albrep.}\in\wder\nu.\subset\wder\mu.$ with
  $\|D_{\albrep.}\|_{\wder\mu.}\le C$.
\end{defn}
\pcreatenrm{wfx}{\wform\mu;X.}{|}
\pcreatenrm{wf}{\wform\mu.}{|}
The following theorem summarizes some results in
\cite{schioppa_thesis,deralb} that we will use.
\begin{thm}[Correspondence between derivations and representations]
  \label{thm:der_alb_corr}
  Let $\mu$ be a Radon measure on a complete separable metric measure
  space $(X,\mu)$. Then:
  \begin{description}
  \item[(IndBound)] If $\mu$ or $X$ are doubling (with constant $C$),
    then the index of $\wder\mu.$ is $\le\lfloor\log_2C\rfloor$; 
  \item[(Surj)] If $\wder\mu.$ is finitely generated with $N$
    generators, then for each $\varepsilon>0$ and each $D\in\wder\mu.$
    there is an
    $(1+\varepsilon)N\|D\|_{\wder\mu.}$-Lipschitz Alberti
    representation $\albrep.$ of a measure $\nu\ll\mu$ such that
    $D_{\albrep.}=D$;
  \item[(WDens)] In general the set of derivations of the form
    $D_{\albrep.}$ is weak* dense in $\wder\mu.$ in the sense that,
    given any $D\in \wder\mu.$, 
    {for all $\varepsilon>0$}, for each $\lambda\in L^1(\mu)$ and for
    each finite set of Lipschitz functions $\{g_i\}_{i=1}^k$ one can
    find an Alberti representation $\albrep.$ of $\mu$ such that:
    \begin{equation}
      \label{eq:der_alb_corr_s1}
      \int_X|\lambda|\sum_{i=1}^k|Dg_i-D_{\albrep.}g_i|\,d\mu\le\varepsilon.
    \end{equation}
  \end{description}
\end{thm}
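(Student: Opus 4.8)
The three items are assembled from the structure theory of \cite{schioppa_thesis,deralb}; I indicate the mechanism behind each and flag where the real work lies. The unifying principle is the two-way dictionary of Definition~\ref{defn:alberti_to_derivation}: every Alberti representation produces a derivation via \eqref{eq:alberti_to_derivation_1}, and conversely a derivation of controlled norm can be realized by an Alberti representation of controlled Lipschitz constant, which is the substance of Theorem~\sync thm:alb_derivation.\ of \cite{deralb}.

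\emph{(IndBound)} is Corollary~\sync cor:derbound.\ of \cite{deralb}; the idea is to convert the algebraic independence in \eqref{eq:index_1} into a geometric packing bound. Suppose $\wder\mu\on U.$ contains $n$ independent derivations $D_1,\dots,D_n$. Pairing them against suitable differentials, one produces Lipschitz functions $f_1,\dots,f_n$ for which the matrix $\bigl(D_if_j\bigr)$ is nondegenerate on a subset $U'\subset U$ of positive measure. Then $\Phi=(f_1,\dots,f_n)$ is biLipschitz on a further positive-measure subset $U''\subset U'$, so $\Phi$ carries a positive-measure piece of the $C$-doubling space $(X,\mu)$ onto a subset of $\real^n$ of positive Lebesgue measure. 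Since the packing (Assouad) dimension of a $C$-doubling space is at most $\log_2C$, this forces $n\le\log_2C$; taking integer parts and the supremum over $U$ and $n$ in \eqref{eq:index_1} gives the bound.

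\emph{(Surj)} I would realize $D$ directly after localizing with the Free Decomposition (Theorem~\ref{thm:free_dec}). On each $X_i$ with $\mu(X_i)>0$ fix a local basis $\Delta_1,\dots,\Delta_i$ and expand $\chi_{X_i}D=\sum_{j\le i}\lambda_j\Delta_j$ with $\lambda_j\in L^\infty(\mu\on X_i)$; normalizing the basis one may assume $|\lambda_j|\le\|D\|_{\wder\mu.}$ for each $j$. Applying the converse construction of Theorem~\sync thm:alb_derivation.\ to each normalized summand and absorbing $\lambda_j$ into the Borel weight $w$ of Definition~\ref{defn:alb_rep} yields, for each $j$, a $(1+\varepsilon)$-Lipschitz representation; superposing the $i\le N$ pieces on $X_i$ and patching across the $X_i$ via Remark~\ref{rem:restr} produces a single representation $\albrep.$ of a measure $\nu\ll\mu$ with $D_{\albrep.}=D$. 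The main obstacle is precisely the Lipschitz bookkeeping: the metric speed of the realizing fragments must be controlled by the action of $D$ across the $\le N$ independent directions, and it is exactly the efficiency of the construction in \cite{deralb} that keeps the total Lipschitz constant down to $(1+\varepsilon)N\|D\|_{\wder\mu.}$ rather than something uncontrolled.

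\emph{(WDens)} Finite generation may fail, so (Surj) does not apply globally and one argues by approximation against the fixed data $\lambda\in L^1(\mu)$ and $\{g_i\}_{i=1}^k$. Exhaust $X$ by Borel sets $U_m\uparrow X$ on which $\wder\mu\on U_m.$ is finitely generated, apply (Surj) to $\chi_{U_m}D$, and choose $m$ so large that the tail $\int_{X\setminus U_m}|\lambda|\sum_i|Dg_i|\,d\mu$ is below $\varepsilon/2$; dominated convergence against $\lambda\in L^1(\mu)$ controls the remaining error in \eqref{eq:der_alb_corr_s1}. The delicate point is that the statement demands a representation of $\mu$ itself whereas (Surj) only yields one of $\nu\ll\mu$: I would repair this by superposing with any representation of the leftover mass after reversing the orientation of half of its fragments, a symmetrization that negates $(g_i\circ\gamma)'$ and hence contributes nothing to $D_{\albrep.}$ while still carrying the missing mass.
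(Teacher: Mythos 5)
Note first that the paper contains no proof of this theorem: it is introduced with ``The following theorem summarizes some results in \cite{schioppa_thesis,deralb} that we will use,'' so the paper's own treatment is a citation, and the only thing to evaluate is the reduction scaffolding you supply on top of the same citation. That scaffolding has genuine gaps. The clearest is in \textbf{(WDens)}: you propose to exhaust $X$ by Borel sets $U_m\uparrow X$ on which $\wder\mu\on U_m.$ is finitely generated and then apply \textbf{(Surj)}, but such an exhaustion need not exist. Take the countable product of circles with its product metric and Haar measure (compact, hence complete and separable): each coordinate vector field gives a nonzero derivation, and these remain independent over $L^\infty(\mu\on U)$ for \emph{every} positive-measure Borel set $U$, so the index of $\wder\mu\on U.$ is infinite on every piece and \textbf{(Surj)} never applies. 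Since \textbf{(WDens)} is exactly the item stated ``in general'' (no doubling, no finite generation), reducing it to \textbf{(Surj)} is a wrong approach rather than a repairable detail. Your symmetrization trick for upgrading a representation of $\nu\ll\mu$ to one of $\mu$ has a further problem: merging two representations $[Q_1,w_1]$ and $[Q_2,w_2]$ into a single pair $[Q,w]$ is not automatic, because the weight in Definition~\ref{defn:alb_rep} is a single Borel function on $X$, not one weight per fragment; this gluing of representations is itself a technical lemma of \cite{deralb}.

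The other two items defer to \cite{deralb} at precisely their crucial steps. In \textbf{(Surj)}, realizing a single normalized basis derivation $\Delta_j$ by a $(1+\varepsilon)$-Lipschitz Alberti representation \emph{is} the hard direction, i.e.\ \textbf{(Surj)} with $N=1$: the theorem you invoke for the ``converse construction'' is, as used in this paper (Definition~\ref{defn:alberti_to_derivation}), only the easy direction from representations to derivations, so your argument is circular as written. Moreover ``normalizing the basis one may assume $|\lambda_j|\le\|D\|_{\wder\mu.}$'' is false for a general normalized basis: two nearly parallel unit derivations force arbitrarily large coefficients. What is needed is an Auerbach-type basis whose coordinate functionals have norm one, together with a measurable selection of such bases over the pieces of Theorem~\ref{thm:free_dec}; this is a real lemma you have not supplied, and it is also where the factor $N$ actually has to be fought for (superposing measures does not add Lipschitz constants of fragments; combining $N$ directions into single fragments does). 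In \textbf{(IndBound)}, the step from nondegeneracy of the matrix $(D_if_j)$ to ``$\Phi$ is biLipschitz on a positive-measure subset'' (or even to $\Phi_\#$ of a positive-measure piece being non-Lebesgue-null) is the entire difficulty: algebraic independence of derivations yields geometric conclusions only through the derivation-to-representation machinery, and absolute-continuity-of-image statements of this kind are deep results, not consequences of ``pairing against suitable differentials.'' The concluding Assouad/Hausdorff dimension comparison is fine, but it sits on an unsupported bridge.
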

Recall that a sequence $\{f_n\}\subset\lipalg X.$ converges to
$f\in\lipalg X.$ in
the weak* topology ($f_n\wkcvj f$) if $f_n\to f$ pointwise and
$\sup_n\glip f_n.<\infty$.
This result is a functional-analytic interpretation of constructions
in \cite{acp_plane,acp_proceedings,bate-diff}, that was obtained in
\cite{schioppa_thesis,deralb}; to obtain the optimal constants we
were greatly helped by Andrea Marchese's PhD thesis.
\begin{thm}[Approximation Scheme]
  \label{thm:approx}
  {Let $K\subset X$ be compact and assume that $\mu\on K$ does not admit
  an Alberti representation with $f$-speed $\ge\delta$. Then there is
  a sequence $g_k\wkcvj f$ such that each $g_k$ is $\max(\glip
  f.,\delta)$-Lipschitz and, for each $k$, there is an open
  set $U_k\supset K$ such that for each ball $B\subset U_k$ the
  restriction 
  $g_k|B$ is $\delta$-Lipschitz.}
\end{thm}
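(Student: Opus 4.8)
The plan is to establish the nontrivial direction of a duality: the hypothesis that $\mu\on K$ carries no Alberti representation with $f$-speed $\ge\delta$ should force $f$ to be, in a scale-localized sense, no steeper than $\delta$ near $K$, and I would make this quantitative by exhibiting explicit flattenings of $f$. The logic is a contradiction against extraction: the approximants $g_k$ will be built so that any failure of the desired properties would manufacture exactly a forbidden representation. More precisely, a failure localized on a positive-measure piece of $K$ will be converted into an Alberti representation with $f$-speed $\ge\delta$ of some $\nu\ll\mu\on K$ with $\nu(X)>0$, contradicting the assumption. Thus the hypothesis is used not to produce the local flatness directly, but to rule out the obstruction to it.

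For the construction I would take a McShane-type inf-convolution of $f$ at a small scale $r_k\to0$, namely $g_k(x)=\inf_{d(x,y)\le r_k}[\,f(y)+\delta\,d(x,y)\,]$. This satisfies $g_k\le f$ and $g_k(x)\ge f(x)-(\glip f.)\,r_k$, so $g_k\to f$ uniformly, and in particular $g_k\wkcvj f$ in the sense recalled above once the Lipschitz bound is in place. On a ball $B(x_0,\rho)$ with $\rho\ll r_k$ on which the defining infimum is always attained at an interior point, the comparison between competitors for nearby centres goes through cleanly and $g_k|B$ is genuinely $\delta$-Lipschitz. To obtain the clean global bound $\max(\glip f.,\delta)$ I would truncate $g_k$ against $f$ by a lattice operation, arranging that $g_k$ reverts to $f$ outside a neighbourhood $U_k\supset K$; verifying properties (i) the global bound, (ii) $\delta$-Lipschitzness on balls $B\subset U_k$, and (iii) $g_k\to f$ is then mostly bookkeeping. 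The delicate point is that the boundary-binding locus, where the infimum is attained only at distance comparable to $r_k$, could a priori meet $K$ and spoil (ii).

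The hard part will be exactly this control of the boundary-binding set. At such a point $x$ the quantity $f(y)+\delta\,d(x,y)$ keeps decreasing out to radius $r_k$, so $f$ descends at rate $\ge\delta$ along data of definite length issuing from $x$; reversing orientation turns this into an ascent at rate $\ge\delta\,\metdiff\gamma$, since $\metdiff$ is invariant under reversal and Alberti representations are built from the reversal-invariant measures $\gamma_{\#}(\lebmeas.\on\dom\gamma)$. The genuine difficulty is that inf-convolution only supplies endpoint pairs with controlled $f$-descent, not bona fide elements of $\frags(X)$; so if this phenomenon persisted, for a sequence of scales tending to $0$, on a subset of $K$ of positive $\mu$-measure, I would first chain the pairs across scales into short descent fragments, then pass to a weak* limit of their normalized push-forwards, exploiting that $\frags(X)$ with the Fell topology carries a good weak* compactness for Radon measures $Q$. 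Lower semicontinuity of length, together with stability of the speed inequality under this limit, would preserve $f$-speed $\ge\delta$, and a measurable selection would assemble a genuine Alberti representation $\albrep.=[Q,w]$ of a measure $\nu\ll\mu\on K$ with $\nu(X)>0$ and $f$-speed $\ge\delta$, contradicting the hypothesis. Hence the boundary-binding set is $\mu$-null near $K$, which is precisely what lets me choose $U_k\supset K$ so that (ii) and (iii) hold and the proof closes.
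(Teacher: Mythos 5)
There is a genuine gap, and it sits exactly where you located the difficulty: the passage from ``boundary-binding'' point pairs to a forbidden Alberti representation. That implication is false. Take for $X$ the plane, for $K$ the four-corner (Garnett) Cantor set $E$ with $\mu=\mathcal{H}^1\on E$, and for $f$ the orthogonal projection onto the $x$-axis. Since $E$ is purely $1$-unrectifiable, every biLipschitz fragment meets $E$ in a set of zero length, so no nonzero $\nu\ll\mu$ admits a (non-degenerate) Alberti representation at all; the hypothesis of the theorem holds for, say, $\delta=\tfrac14$. Yet at every point of $E$ and at every scale there are pairs $x,y$ with $|f(x)-f(y)|\ge\tfrac12 d(x,y)$, so your boundary-binding phenomenon persists on a set of full measure at all scales, and the contradiction you plan to extract from it can never materialize. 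The mechanism cannot be repaired, because an Alberti representation needs fragments whose domains have positive Lebesgue measure and whose images carry a measure $\nu\ll\mu$: a pair of points pushes forward $\lebmeas.$ to the zero measure; your chained descent fragments either run through the complement of the support (hence carry no $\mu$-mass) or have lengths comparable to $r_k\to0$; and in the weak* limit such fragments degenerate to constants. Lower semicontinuity of length works against you here: it permits all length to vanish in the limit, whereas you would need a uniform lower bound on the length the limit fragments retain inside the support.

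The same class of examples shows the construction itself cannot deliver the conclusion. For a fat Cantor set $C\subset\real$ with $f(x)=\lebmeas.([0,x]\setminus C)$ (the hypothesis again holds, for every $\delta>0$), at each scale $r_k$ the points of $C$ adjacent to gaps of length about $r_k$ are points of $K$ at which the inf-convolution (in the hard-constrained form, which is not even continuous in a general metric space, or in any regularized variant restoring the global Lipschitz bound) climbs at rate $\glip f.$ rather than $\delta$; so $g_k$ fails to be $\delta$-Lipschitz on every ball around such points. These bad points lie in $K$, and $U_k$ is required to be an open set containing all of $K$: hence even a proof that the boundary-binding set is $\mu$-null would not let you choose $U_k$, since a null subset of the compact $K$ cannot be avoided by an open neighbourhood of $K$ — the conclusion is not a $\mu$-a.e.\ statement, and measure-theoretic smallness is the wrong currency for it. This is also why the actual argument (the theorem is quoted in this paper without proof from \cite{schioppa_thesis,deralb}) runs along entirely different lines: non-existence of representations with $f$-speed $\ge\delta$ is dualized, via the Alberti--Cs\"ornyei--Preiss/Bate-type decomposition theory, into a countable covering of $K$ by pieces on which $f$ is $\delta$-flat \emph{along fragments} at definite scales, and the approximants are then built by a gluing which is flat near those pieces and hides all of the climbing of $f$ inside the ``gaps'' of the support, at the price of controlling a full neighbourhood only of a compact subset of nearly full measure — which is precisely the form in which the result is invoked in Step 5 of the proof of Theorem~\ref{thm:blow_up_diff}.
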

This result  \cite[Sec.~3.3]{schioppa_thesis,deralb} characterizes the
local norm on the Weaver cotangent bundle.
\begin{thm}[Characterization of the local norm]
  \label{thm:char_loc_norm}
  Let $K\subset X$ be compact, $f:X\to\real$ Lipschitz and
  $\varepsilon>0$. Then $\mu\on K$ admits a
  $(1,1+\varepsilon)$-biLipschitz Alberti representation with
  $f$-speed $\ge\wfnrm df.-\varepsilon$.
\end{thm}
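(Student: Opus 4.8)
The plan is to argue by contradiction through the Approximation Scheme (Theorem~\ref{thm:approx}), exploiting that the local norm $\wfnrmname$ is weak* lower semicontinuous and that local flatness of a function forces its differential to have small local norm. Write $\sigma=\wfnrm df.$. The heart of the matter is the following dichotomy for a compact set: if $\mu\on K$ admits no Alberti representation with $f$-speed $\ge\delta$, then $\sigma\le\delta$ holds $\mu$-a.e.\ on $K$. Granting this, the contrapositive shows that whenever $\sigma>\delta$ on a set of positive measure inside a compact set, that set carries an Alberti representation with $f$-speed $\ge\delta$; a level-set decomposition then upgrades the constant threshold $\delta$ to the pointwise bound $\sigma-\varepsilon$, and a final reparametrisation produces the biLipschitz constants.

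To prove the dichotomy, suppose $\mu\on K$ admits no Alberti representation with $f$-speed $\ge\delta$ and apply Theorem~\ref{thm:approx} to obtain $g_k\wkcvj f$ with each $g_k$ being $\delta$-Lipschitz on every ball contained in an open set $U_k\supset K$. The key local estimate is that $\wfnrm dg_k.\le\delta$ holds $\mu$-a.e.\ on $K$. I would prove this by testing $dg_k$ against the derivations $D_{\albrep.}$ coming from Alberti representations, which are weak* dense by (WDens): for $\albrep.=[Q,w]$ and a.e.\ $t$ with $\gamma(t)\in K$, continuity of $\gamma$ and openness of $U_k$ place the nearby points $\gamma(t'),\gamma(t)$ in a common ball inside $U_k$, so the $\delta$-Lipschitz bound gives $|(g_k\circ\gamma)'(t)|\le\delta\,\metdiff\gamma(t)$; feeding this into the defining formula~\eqref{eq:alberti_to_derivation_1} (localised via Lemma~\ref{lem:locality_derivations}) and using that the local norm of $D_{\albrep.}$ is dominated by the metric speed $\metdiff\gamma$ of the fragments yields $\chi_K|D_{\albrep.}g_k|\le\delta\,\chi_K\locnorm D_{\albrep.},{\wder\mu.}.$, whence $\wfnrm dg_k.\le\delta$ a.e.\ on $K$ by duality and density. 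Since $d$ is weak* continuous and $g_k\wkcvj f$, we have $dg_k\to df$ weak* in $\wform\mu.$, and weak* lower semicontinuity of the local norm gives $\sigma=\wfnrm df.\le\liminf_k\wfnrm dg_k.\le\delta$ $\mu$-a.e.\ on $K$, as claimed.

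For the level-set step, fix $\eta\in(0,\varepsilon/2]$ and partition $K$ into the Borel sets $K_n=K\cap\{\,n\eta\le\sigma<(n+1)\eta\,\}$. For $n\ge1$, choose by inner regularity of $\mu$ a compact $K_n'\subset K_n$ exhausting $K_n$ up to a $\mu$-null set; since $\sigma\ge n\eta>n\eta-\varepsilon/2$ on $K_n'$, the dichotomy (contrapositive, applied on the compact set $K_n'$ with $\delta=n\eta-\varepsilon/2$) furnishes an Alberti representation of $\mu\on K_n'$ with $f$-speed $\ge n\eta-\varepsilon/2$, and one checks $n\eta-\varepsilon/2\ge\sigma-\varepsilon$ on $K_n'$ because $\sigma<(n+1)\eta\le n\eta+\varepsilon/2$. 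On $K_0$, where $\sigma<\eta$, the required speed $\sigma-\varepsilon<0$ is met by any representation, provided only that the module $\wder\mu\on K_0.$ is nontrivial. Gluing the countably many mutually singular pieces (summing the measures $Q_n$ on $\frags(X)$ with the corresponding weights) produces a single Alberti representation of $\mu\on K$ with $f$-speed $\ge\sigma-\varepsilon$.

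It remains to upgrade each piece to a $(1,1+\varepsilon)$-biLipschitz representation, and this is the step I expect to be the main obstacle, since Theorem~\ref{thm:approx} delivers no control on the biLipschitz constants. I would first reparametrise each fragment by arc length so that $\metdiff\gamma=1$ a.e., which makes the parametrisation infinitesimally isometric and, after integrating the preserved $f$-speed bound, gives $f(\gamma(t'))-f(\gamma(t))\ge(\sigma-\varepsilon)(t'-t)$ for $t'>t$. On the pieces $K_n$ with $n\ge1$ this lower bound is strictly positive, so $f\circ\gamma$ is strictly increasing and the fragment cannot fold back on itself; combining the no-folding coming from $f$ with the local near-isometry coming from $\metdiff\gamma=1$, and decomposing each fragment into countably many subfragments on which both effects are uniform, yields fragments that are genuinely $(1,1+\varepsilon)$-biLipschitz while retaining $f$-speed $\ge\sigma-\varepsilon$ (after a harmless relabelling of $\varepsilon$). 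The technical care lies in carrying out this decomposition measurably in $\gamma$ and in simultaneously keeping the biLipschitz constants near $(1,1+\varepsilon)$ and the speed bound intact, which is where the argument is most delicate.
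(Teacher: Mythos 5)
The paper itself does not prove Theorem~\ref{thm:char_loc_norm}; it quotes it from \cite[Sec.~3.3]{deralb}, so your argument has to be judged on its own terms. Your architecture (dichotomy via Theorem~\ref{thm:approx} plus weak* lower semicontinuity, then a level-set decomposition, then a refinement to biLipschitz fragments) is viable, but your justification of the central estimate $\wfnrm dg_k.\le\delta$ $\mu$-a.e.\ on $K$ is a non sequitur. Along fragments you obtain $\chi_K|D_{\albrep.}g_k|\le\delta\,\chi_K\rho_{\albrep.}$, where $\rho_{\albrep.}$ is the density of the \emph{speed measure} $A\mapsto\int_{\frags(X)}dQ(\gamma)\int_{\gamma^{-1}(A)}w\circ\gamma\,\metdiff\gamma(t)\,dt$, while the general fact you invoke is $\locnorm D_{\albrep.},{\wder\mu.}.\le\rho_{\albrep.}$; these two inequalities do not combine to give $\chi_K|D_{\albrep.}g_k|\le\delta\,\chi_K\locnorm D_{\albrep.},{\wder\mu.}.$, which would require the reverse domination $\rho_{\albrep.}\le\locnorm D_{\albrep.},{\wder\mu.}.$. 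The reverse domination is false: mixing oppositely oriented horizontal lines in the plane gives $D_{\albrep.}=0$ while $\rho_{\albrep.}=1$. Moreover, the planned passage from Alberti-type derivations to all of $\wder\mu.$ cannot be made through \textbf{(WDens)}: that statement approximates $Dg_k$ by $D_{\albrep.}g_k$ only in an $L^1$ sense and gives no control of $\locnorm D_{\albrep.},{\wder\mu.}.$ in terms of $\locnorm D,{\wder\mu.}.$, so a bound of the form $|D_{\albrep.}g_k|\le\delta\locnorm D_{\albrep.},{\wder\mu.}.$ does not pass to the limit derivation. The estimate you need is true, and has a shorter proof that avoids Alberti representations entirely: cover $K$ by countably many balls $B\subset U_k$; on each $B$ extend $g_k|B$ by MacShane to a globally $\delta$-Lipschitz function $h$; by locality (Lemma~\ref{lem:locality_derivations}) $\chi_BDg_k=\chi_BDh$, and $|Dh|\le\delta\,\locnorm D,{\wder\mu.}.$ $\mu$-a.e.\ because the local norm of a derivation dominates $|D\varphi|$ for every $1$-Lipschitz $\varphi$ \cite{weaver00}. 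Since this holds for every $D\in\wder\mu.$, the duality defining the local norm on $\wform\mu.$ gives $\wfnrm dg_k.\le\delta$ a.e.\ on $K$ (this is exactly the fact the paper itself uses without comment at~(\ref{eq:blow_up_diff_p31})), and your weak* lower semicontinuity step then closes the dichotomy correctly.

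Two further steps would fail as written. In the gluing, the $f$-speed condition of Definition~\ref{defn:alb_speed} is imposed at $\lebmeas.$-a.e.\ point of \emph{every} fragment, not merely at points mapped into the represented set: a fragment produced for the piece $K_n'$ (constant threshold $n\eta-\varepsilon/2$) may pass through points of $K_m$, $m>n$, where the target function $\wfnrm df.-\varepsilon$ exceeds that threshold, so the summed representation need not have $f$-speed $\ge\wfnrm df.-\varepsilon$. The remedy is to first restrict each fragment to $\gamma^{-1}(K_n')$, which is closed and hence again a fragment; since $\left(\mpush\gamma.(\lebmeas.\on\dom\gamma)\right)\on K_n'=\mpush\gamma.\left(\lebmeas.\on\gamma^{-1}(K_n')\right)$, the represented measure is unchanged, and afterwards all fragment points lie where $\wfnrm df.-\varepsilon\le n\eta-\varepsilon/2$. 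Relatedly, your claim that on $K_0$ a negative threshold ``is met by any representation'' is wrong: $(f\circ\gamma)'$ can be as negative as $-\glip f.\metdiff\gamma$; one must refine by reversing the orientation of each fragment on the part of its domain where $(f\circ\gamma)'<0$, which upgrades any representation to one with $f$-speed $\ge0$ (and the existence of \emph{some} representation of $\mu\on K_0$ is itself a nontrivial point; atoms, where no representation exists but also $\wfnrm df.=0$, show the statement must be read with the conventions of \cite{deralb}). Finally, in the biLipschitz upgrade, monotonicity of $f\circ\gamma$ does not follow from $(f\circ\gamma)'>0$ a.e.\ on $\dom\gamma$, because $\dom\gamma$ has gaps across which $f$ may decrease; this is harmless, though, because the countable decomposition you invoke, applied at density points where $\metdiff\gamma=1$, already yields $d(\gamma(t),\gamma(t'))\ge(1+\varepsilon)^{-1}|t-t'|$ for all pairs in a subfragment, and a linear reparametrization then gives the $(1,1+\varepsilon)$-biLipschitz property while preserving the speed bound; the appeal to monotonicity of $f\circ\gamma$ is neither available nor needed.
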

\subsection{Differentiability spaces}
\label{subsec:diff_spaces}
We start with a brief review of differentiability spaces. 
For more details we refer to the original
papers \cite{cheeger99,keith_thesis,keith04} or to the expository paper
\cite{kleiner_mackay}. This structure has several names in
the literature: (strong) measurable differentiabile structure,
differentiable structure (in the sense of Cheeger and Keith),
Lipschitz differentiability space, differentiability space.
We highlight the features of differentiability spaces; contrary to
some earlier papers, we do not assume a uniform bound on the dimension of the
charts.
\begin{enumerate}
\item There is a countable collection of charts
  $\{(U_\alpha,\phi_\alpha)\}_\alpha$, 
  where $U_\alpha\subset X$ is Borel and $\phi_\alpha$ is Lipschitz,
  such that $X\setminus
  (\cup_\alpha U_\alpha)$ is $\mu$-null, and each real-valued
  Lipschitz function $f$ admits a first order Taylor expansion with
  respect to the components of $\phi_\alpha:X\to\real^{N_\alpha}$
  at
  generic points of $U_\alpha$, i.e.~there are (a.e.~unique) measurable functions $\frac{\partial
      f}{\partial\phi_\alpha^i}$ on $U_\alpha$ such that:
  \begin{multline}
    \label{eq:taylor_exp}
    f(x)=f(x_0)+\sum_{i=1}^{N_\alpha}\frac{\partial
      f}{\partial\phi_\alpha^i}(x_0)\left(\phi_\alpha^i(x)-\phi_\alpha^i(x_0)\right)
    +o\left(d(x,x_0)\right)\\ \quad(\text{for $\mu$-a.e.~$x_0\in U_\alpha$}).
  \end{multline}
  The integer $N_\alpha$ is the dimension of the chart
  $\{(U_\alpha,\phi_\alpha)\}_\alpha$, and depends only on the set
  $U_\alpha$, not on the particular choice of the coordinate functions
  $\phi_\alpha$. If $\sup_\alpha N_\alpha<\infty$, it is called \textbf{the
  differentiability or the analytic dimension}. Since in this paper we
  are interested in the blow-ups of differentiability
  spaces, we can assume that there is a unique chart that covers all
  the space.
\item There are  measurable cotangent and tangent bundles
  $T^*X$ and $TX$; however, we will work with $\wder\mu.$ and
  $\wform\mu.$. By the result of \cite{deralb,schioppa_thesis} $\wder\mu.$
  can be identified with the set of bounded measurable sections of
  $TX$ and $\wform\mu.$ with the set of bounded measurable sections of
  $T^*X$. Having locally trivialized $T^*X$ and $TX$, forms in $T^*X$
  correspond to differentials of Lipschitz functions, and vectors in
  $TX$ give rise to examples of Weaver derivations in $\wder\mu.$.
\end{enumerate}
\def\varlip{{\rm Var}} 
\begin{defn}[Variation and pointwise Lipschitz constants]
  \label{defn:var_lipcons}
  For $f\in\lipfun X.$ we define the \textbf{variation $\varlip f(x,r)$ at $x$ at scale $r$},
the \textbf{big-Lip constant $\biglip f$ at $x$} and the
\textbf{small-lip constant $\smllip f$ at $x$} as follows: 
\begin{align}
  \label{eq:var_lipcons_1}
  \varlip f(x,r) &= \frac{1}{r} \sup_{y\in B(x,r)}|f(y)-f(x)|\\
  \biglip f(x)&=\limsup_{r\searrow0}\varlip f(x,r)\\
  \smllip f(x)&=\liminf_{r\searrow0}\varlip f(x,r).
\end{align}
A metric measure space $(X,\mu)$ satisfies Keith's \textbf{Lip-lip
  inequality} with constant $K\ge1$ if for each $f$ Lipschitz one has:
\begin{equation}
  \label{eq:var_lipcons_2}
  \biglip f\le K\smllip f\quad\text{$\mu$-a.e.}
\end{equation}
\end{defn}
\begin{thm}[Summary of results on differentiability spaces]
  \label{thm:diff_summary}
  This list summarizes relevant results on differentiability spaces:
  \begin{description}
  \item[(Cheeger)] \cite{cheeger99}; if $(X,\mu)$ is a doubling metric
    measure space which admits an abstract Poincar\'e
    inequality in the sense of Heinonen-Koskela \cite{heinonen98}
    then $(X,\mu)$ is a differentiability space whose analytic
    dimension is bounded by an expression that depends only on the
    doubling constant $C_\mu$ of $\mu$ and the constants that appear
    in the Poincar\'e inequality. Moreover, (\ref{eq:var_lipcons_2})
    holds with $K=1$;
  \item[(Keith)] \cite{keith_thesis,keith04}; if $(X,\mu)$ is a
    doubling metric measure space which
    satisfies the Lip-lip inequality~(\ref{eq:var_lipcons_2}), then
    $(X,\mu)$ is a differentiability space whose analytic dimension is
    bounded by an expression that depends only on $C_\mu$ and
    $K$. Moreover \cite{keith-modulus}, the Poincar\'e inequality is
    stable under measured Gromov-Hausdorff convergence provided all
    the relevant constants are uniformly bounded; for example,
    blow-ups of PI-spaces are PI-spaces (with the same PI-exponent);
  \item[(Bate--Speight)] \cite{bate_speight}; if $(X,\mu)$ is a
    differentiability space then $\mu$ is asymptotically doubling in
    the sense that for $\mu$-a.e.~$x$ there are
    $(C_x,r_x)\in(0,\infty)^2$ such that:
    \begin{equation}
      \label{eq:diff_summary_s1}
        \mu\left(
          B(x,2r)
        \right) \le C_x
        \mu\left(
          B(x,r)
        \right)
        \quad(r\le r_x);
      \end{equation}
    moreover, porous sets are $\mu$-null. 
    In the following, to simplify the exposition, we will thus assume
    that differentiability spaces are doubling metric measure spaces.
  \item[(Bate)] \cite{bate-diff,bate_thesis_final}; if $(X,\mu)$ is a differentiability space, $\mu$
    admits many independent Alberti representations generalizing some
    of the
    results in $\real^N$ of \cite{acp_plane,acp_proceedings}. Moreover,
    (\ref{eq:var_lipcons_2}) holds where now $K$ is a Borel function
    that depends only on $X$;
  \item[(Schioppa)] \cite{schioppa_thesis,deralb}; in a differentiability
    space~(\ref{eq:var_lipcons_2}) always holds with $K=1$. Moreover,
    $(X,\mu)$ is a differentiability space if and only if for each
    Lipschitz function $f$:
    \begin{equation}
      \label{eq:diff_summary_s2}
      \biglip f=\wfnrm df.\quad(\text{$\mu$-a.e.}),
    \end{equation}
    and~(\ref{eq:diff_summary_s2}) already encodes the condition that
    $\mu$ is asymptotically doubling. We will refer
    to~(\ref{eq:diff_summary_s2}) as the \textbf{quantitative
      characterization} of differentiability spaces.
  \end{description}
\end{thm}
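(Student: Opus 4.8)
The statement collects results already established in the cited literature, so the plan is to prove it item by item, in each case reducing to the indicated source; below I describe the shape of each argument and isolate the one on which the rest depend.

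For \textbf{(Cheeger)} the plan is to use the abstract Poincar\'e inequality to establish a Lip-lip equality, and then run Cheeger's finite-dimensionality argument for spaces of functions that are asymptotically harmonic at a generic point; the dimension bound is extracted by tracking how $C_\mu$ and the Poincar\'e constants enter the covering and telescoping estimates. For \textbf{(Keith)} one weakens the hypothesis to the bare Lip-lip inequality and reruns a maximal-function and covering scheme to build charts whose dimension is controlled by $C_\mu$ and $K$; the stability of the Poincar\'e inequality under measured Gromov--Hausdorff convergence then follows from Keith's characterization of PI-spaces via moduli of curve families, using the upper-semicontinuity of modulus (dually, the lower-semicontinuity of length) along the convergence.

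For \textbf{(Bate)} the plan is to adapt the Alberti--Cs{\"o}rnyei--Preiss decomposition from $\real^N$ to the abstract setting, producing many independent Alberti representations and, as a by-product, the Lip-lip inequality with a Borel function $K$. From this, \textbf{(Bate--Speight)} follows: a porous set would leave room to reroute fragments and so could not support a representation, whence porous sets are null and $\mu$ is asymptotically doubling. Finally, for \textbf{(Schioppa)} both the upgrade to $K=1$ and the characterization $\biglip f=\wfnrm df.$ are obtained together from Theorems~\ref{thm:der_alb_corr}, \ref{thm:char_loc_norm} and~\ref{thm:approx}: the $(1,1+\varepsilon)$-biLipschitz representation with $f$-speed $\ge\wfnrm df.-\varepsilon$ passes through a generic point along fragments that sample all small scales, forcing $\smllip f\ge\wfnrm df.$, while the approximation scheme caps the oscillation from above, giving $\biglip f\le\wfnrm df.$; since $\smllip f\le\biglip f$ always, the chain closes to $\smllip f=\biglip f=\wfnrm df.$.

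The main obstacle is the Alberti-representation input behind \textbf{(Bate)}: extracting a full system of independent representations directly from the abstract differentiability hypothesis is the technical heart of the theory, and the remaining items --- asymptotic doubling, the self-improvement to $K=1$, and the quantitative characterization --- all ultimately rest upon it.
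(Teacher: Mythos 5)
The paper gives no proof of this theorem at all: it is a summary-by-citation, each item being exactly the result of the reference attached to it, so your overall strategy of reducing each item to its source is the same as the paper's. Your sketches for \textbf{(Cheeger)}, \textbf{(Keith)} and \textbf{(Schioppa)} are faithful to the cited arguments; in particular you correctly identify that the Schioppa item is obtained from Theorem~\ref{thm:char_loc_norm} (whose $(1,1+\varepsilon)$-biLipschitz representations force $\smllip f\ge\wfnrm df.$ at density points of the domains of the fragments) together with Theorem~\ref{thm:approx} (which caps $\biglip f$ by $\wfnrm df.$ in a differentiability space).

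There is, however, a genuine gap in your treatment of \textbf{(Bate--Speight)}: you propose to derive it as a corollary of \textbf{(Bate)}, and this is circular. Bate's construction of independent Alberti representations relies throughout on Lebesgue density-point arguments, whose validity for $\mu$ is precisely what asymptotic doubling guarantees; in the literature the logical order is the reverse of yours: Bate and Speight first prove asymptotic doubling and nullity of porous sets by a direct, self-contained argument (from a positive-measure porous set one manufactures a Lipschitz distance-type function that cannot admit a first-order Taylor expansion with respect to any candidate chart on that set), and Bate's representation theory then takes this as input. Moreover, your one-line justification --- that a porous set ``could not support a representation'' --- is not valid as stated: a porous set can perfectly well carry a measure admitting an Alberti representation. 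For instance, in $X=\real^2$ with $\mu$ the sum of Lebesgue measure and of $1$-dimensional Hausdorff measure on a line $L$, the set $L$ is porous, $\mu(L)>0$, and $\mu\on L$ is represented by the line itself; what fails there is differentiability of the ambient space, not the existence of fragments. So extracting a contradiction from porosity requires the full interplay between representations, their speed/independence, and the charts --- which is exactly the content of the Bate--Speight argument, not a by-product of the existence of representations. The same reversal appears in your closing paragraph, where you assert that asymptotic doubling ``ultimately rests upon'' the Alberti-representation input; it is the other way around.
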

As a historical note, we point out that an earlier result (where one
loses the optimal PI-exponent) on the
stability of the Poincar\'e inequality under measured Gromov-Hausdorff
convergence can be found in
\cite[Sec.9]{cheeger99}. 
\subsection{The $p$-weak gradient}
\label{subsec:weak_grad}
We recommend as a reference \cite{acm_sobreflex} besides \cite{ambrosio_weakgrads}.
\def\evala{\text{\normalfont Ev}} 
\begin{defn}[Absolutely continuous curves]
  \label{defn:ac_curves}
  {Let $I$ be either a nondegenerate interval of $\real$ of the form
  $[a,b]$, $(-\infty,a]$ or $[a,\infty)$, or the whole $\real$. A
  curve $\gamma:I\to X$ is \textbf{absolutely continuous} if there is
  a $g\in L^1(\lebmeas.\on\dom\gamma)$ such that for each
  $t,s\in\dom\gamma$ with $t\ge s$ one has:
  \begin{equation}
    \label{eq:ac_curves0}
    d(\gamma(t),\gamma(s))\le\int_s^t g(\tau)\,d\tau.
  \end{equation}
  Recall that if $\gamma:\dom\gamma\to X$ is absolutely continuous
  there is a minimal $g$ satisfying~(\ref{eq:ac_curves0}) which
  coincides  $\lebmeas.\on\dom\gamma$-a.e.~with the metric
  differential $\metdiff\gamma$ as defined in~(\ref{eq:met_diff_1}).
  Let $p\in[1,\infty)$ and $\gamma$ be an absolutely continuous
  curve; then $\gamma$ is of \textbf{class $\acspace^p$} if:
  \begin{equation}
    \label{eq:ac_curves_1}
    \int_{\dom\gamma}(\metdiff\gamma(t))^p\,dt<\infty.
  \end{equation}}
  The limit case $p=\infty$ corresponds to $\gamma$ being
  Lipschitz. The set of curves of class $\acspace^p$ and with domain
  $[0,1]$ will be denoted by $\acspace^p(X;[0,1])$.
\end{defn}
\begin{defn}[Test plan]
  \label{defn:test_plan}
  Let $(X,\mu)$ be a metric measure space and $p\in[1,\infty]$; a
  probability measure on $\acspace^p(X;[0,1])$ is called a
  \textbf{$p$-test plan} provided that:
  \begin{align}
    \label{eq:test_plan_1}
    \int_{\acspace^p(X;[0,1])}d\pi(\gamma) \int_0^1
    (\metdiff\gamma(t))^p\,dt&<\infty\\
    \label{test_plan_2}
    \evala_{t\#}\pi &\le C(\pi)\mu\quad(\forall t\in[0,1])
  \end{align}
  for some constant $C(\pi)$, where $\evala_t$ denotes the \textbf{evaluation
  map}:
  \begin{equation}
    \label{eq:test_plan_3}
    \begin{aligned}
      \evala_t:\acspace^p(X;[0,1])&\to X\\
      \gamma&\mapsto\gamma(t).
    \end{aligned}
  \end{equation}
\end{defn}
\pcreateasymnrm{pwgrad}{\text{\normalfont$p$,w}}{|\nabla}{|}
\begin{defn}[$p$-weak gradients]
  \label{defn:weak_grad}
  Let $(X,\mu)$ be a metric measure space, $f:X\to\real$ and
  $g:X\to[0,\infty]$ Borel. Let $p\in(1,\infty)$ and $q$ denote the
  dual exponent $\frac{p}{p-1}$. The function $g$ is a \textbf{$p$-weak
    upper gradient} of $f$ if for each $q$-test plan $\pi$
  \begin{equation}
    \label{eq:weak_grad_1}
    \left|
      f(\gamma(1)) -
      f(\gamma(0))
    \right| \le \int_0^1 g\circ\gamma\,\metdiff\gamma(t)\,dt
  \end{equation}
  holds $\pi$-a.s.
  \par Assuming that $f$ has a $p$-weak upper gradient $g_0$ such that the
  measure $\mu\on\{g_0>0\}$ is $\sigma$-finite, then the set of
  $p$-weak upper gradients of $f$ contains a minimal element, the
  \textbf{$p$-weak gradient of $f$}, such that:
  \begin{equation}
    \label{eq:weak_grad_2}
    \pwgradnrm f.\le g\quad(\text{$\mu$-a.e.})
  \end{equation}
  for each $p$-weak upper gradient $g$ of $f$.
\end{defn}
\section{Blow-up of Weaver Derivations}
\label{sec:blow_weaver}
\subsection{Variants of Gromov-Hausdorff convergence}
\label{subsec:mGHconvs}
\pcreatedst{y}{d_Y}
\pcreatedst{z}{d_Z}
\pcreatedst{x}{d_X}
\pcreatedst{xn}{d_{X_n}}
\pcreateconv{gh}{\text{\normalfont GH}}
\pcreateconv{mgh}{\text{\normalfont mGH}}
\pcreateconv{mfgh}{\text{\normalfont mfGH}}
We discuss some variants of Gromov-Hausdorff convergence (GH for
short). Throughout this subsection metric spaces are assumed to be
complete.
\begin{defn}[GH-convergence]
  \label{defn:gh_conv}
  The GH-convergence of a sequence of pointed metric spaces
  $(X_n,p_n)\ghcvj (Y,q)$ is equivalent to the existence of a pointed
  metric space $(Z,z)$, which we will call a \textbf{container}, and
  isometric embeddings:
  \begin{equation}
    \label{eq:gh_conv_1}
    \begin{aligned}
      \iota_n&:X_n\to Z\\
      \iota_\infty&:Y\to Z
    \end{aligned}
  \end{equation}
  such that:
  \begin{description}
  \item[(GH1)] $\iota_n(p_n)\to\iota_\infty(q)$; one can even arrange
    $\iota_n(p_n)=\iota_\infty(q)=z$;
  \item[(GH2)] for each $R>0$, one has:
    \begin{equation}
      \label{eq:gh_conv_3}
      \begin{aligned}
        &\lim_{n\to\infty}\sup_{y\in\ball z,R.\cap
          \iota(Y)}\dist\left(\iota_n\left(X_n\right),\{y\}\right)=0,\\
        &\lim_{n\to\infty}\sup_{y\in\ball z,R.\cap
          \iota_n(X_n)}\dist\left(\iota\left(Y\right),\{y\}\right)=0.
      \end{aligned}
    \end{equation}
  \end{description}
  In the following we will often suppress $\iota_n$ and $\iota_\infty$
  from the notation, just implying $X_n,Y\subset Z$. Note that 
  each $y\in Y$ can be ``approximated'' by a sequence
  $x_n\in X_n$ such that $\iota_n(x_n)\to\iota_\infty(y)$ in
  $Z$. This notion is actually independent of the container $(Z,z)$, and
  one can \textbf{represent} each point $y\in Y$ by some sequence
  $x_n\in X_n$.
\end{defn}
\begin{defn}[mGH-convergence]
  \label{defn:mgh_conv}
  {We say that a pointed metric measure space $(X,\mu,p)$ is
  \textbf{measure-normalized} if $\int_{B(p,1)}(1-d(p,x))\,d\mu(x)=1$.} The measured
  GH-convergence (mGH for short) of a sequence of measure-normalized pointed
  metric measure spaces $(X_n,\mu_n,p_n)\mghcvj (Y,\nu,q)$ is
  equivalent to requiring $(X_n,p_n)\ghcvj (Y,q)$, and that for each
  container $(Z,z)$:
  \begin{equation}
    \label{eq:mgh_conv_1}
    \mu_n\wkcvj \nu\quad(\text{i.e.~$\iota_{n,\#}\mu_n\wkcvj\iota_{\infty,\#}\nu$}).
  \end{equation}
\end{defn}
\begin{defn}[mfGH-convergence]
  \label{defn:mfgh_conv}
  A \textbf{function space} is a tuple $(X,\mu,p,\Phi)$ where
  $(X,\mu,p)$ is a normalized pointed metric measure space, and $\Phi$
  is an at most countable collection of real-valued $1$-Lipschitz
  functions on $X$ which vanish at $p$. We say that:
  \begin{equation}
    \label{eq:mfgh_conv_1}
    (X_n,\mu_n,p_n,\Phi_n)\mfghcvj (Y,\nu,q,\Psi)
  \end{equation}
  in the mfGH-sense if:
  \begin{description}
  \item[(mfGH1)] $(X_n,\mu_n,p_n)\mghcvj (Y,\nu,q)$;\\
  \item[(mfGH2)] $\Phi_n$ and $\Psi$ have eventually the same
    cardinality;
  \item[(mfGH3)] Let $\varphi_{n,k}$ denote the $k$-th element of
    $\Phi_n$ and $\psi_k$ the $k$-th element of $\Psi$; then
    whenever $x_n$ represents $y$, $\varphi_{n,k}(x_n)\to\psi_k(y)$.
  \end{description}
\end{defn}
\begin{rem}
  \label{rem:macshane_mfGH}
  Let $(X_n,\mu_n,p_n,\Phi_n)\mfghcvj (Y,\nu,q,\Psi)$ in the container
  $(Z,z)$. By replacing $Z$ with $Z\times \real$ and slightly shifting
  basepoints we may assume:
  \begin{equation}
    \label{eq:macshane_mfGH_1}
    \begin{aligned}
      X_n\cap X_m&=\emptyset\quad(n\ne m)\\
      Y\cap X_n&=\emptyset\quad(\forall n).
    \end{aligned}
  \end{equation}
  Then we might try to define $\varphi_{Z,k}=\varphi_{n,k}$ on $X_n$
  and $\varphi_{Z,k}=\psi_k$ on $Y$. This yields a continuous function
  on $\bigcup_nX_n\cup Y$ which one might extend to $Z$. However, fix
  $R>0$; then $\varphi_{Z,k}|B(z,R)$ is almost $1$-Lipschitz up to
  additive errors that depend on the Hausdorff-distance between
  $X_n\cap B(z,R)$ and $Y\cap B(z,R)$. By replacing $Z$ with
  $Z\times\real$, passing to a subsequence and shifting basepoints, one
  can verify the following lemma.
\end{rem}
\begin{lem}
  \label{lem:mfgh_macshane}
  Let $(X_n,\mu_n,p_n,\Phi_n)\mfghcvj (Y,\nu,q,\Psi)$ in the container
  $(Z,z)$. Up to passing to a subsequence and taking a new container
  of the form $(Z\times\real,(z,0))$, one can assume that for each
  $k\le\#\Psi$ there is a $1$-Lipschitz function
  $\varphi_{Z,k}:Z\to\real$ such that:
  \begin{enumerate}
  \item $\varphi_{Z,k}|Y=\psi_k$;
  \item for each $R>0$, $k\le\#\Psi$, there is an $N(R,k)$ such that,
    if $n\ge N(R,k)$, one has:
    \begin{equation}
      \label{eq:mfgh_macshane_s1}
      \varphi_{Z,k}|B(z,R)\cap X_n = \varphi_{n,k}.
    \end{equation}
  \end{enumerate}
\end{lem}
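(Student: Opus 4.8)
The plan is to promote the pointwise statement (mfGH3) to uniform convergence on bounded balls, and then to realize $\psi_k$ together with all the $\varphi_{n,k}$ as restrictions of a single $1$-Lipschitz function obtained by McShane extension, after separating the copies $X_n$ inside the extra $\real$-factor with a \emph{sum} metric so that the heights absorb the convergence errors additively.

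\textbf{Uniformization.} First I would show that (mfGH3) self-improves to a uniform estimate. Since each $\varphi_{n,k}$ is $1$-Lipschitz and vanishes at $p_n\to q$, the family is equi-Lipschitz and equi-bounded on every ball $B(z,R)$; combining this equicontinuity with (mfGH3) through a routine compactness argument (extract a point $x_{n_j}$ realizing a putative defect, pass to a limit $y_\infty\in Y$, and contradict (mfGH3)) yields, for each $R$ and $k$,
\[
\varepsilon_{n,k}(R):=\sup_{x\in B(z,R)\cap X_n}\ \inf_{y\in Y}\big(|\varphi_{n,k}(x)-\psi_k(y)|+d(x,y)\big)\xrightarrow[n\to\infty]{}0 .
\]
I may assume $Z$ is proper, passing to a proper container if necessary. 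Set $\delta_n:=\max_{k\le\#\Psi}\varepsilon_{n,k}(R_n)$ for a choice of radii $R_n\nearrow\infty$, pass to a subsequence along which $\sum_n\delta_n<\infty$ with $\delta_n$ nonincreasing, and put $h_n:=2\sum_{m\ge n}\delta_m$, so that $h_n\searrow0$ while $h_n\ge2\delta_n$ and $h_n-h_m\ge\delta_n+\delta_m$ for $n<m$.

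\textbf{Gluing and extension.} As the new container I take $Z\times\real$ with the \emph{sum} metric $d\big((w,s),(w',s')\big)=d_Z(w,w')+|s-s'|$ and basepoint $(z,0)$; I embed $Y$ at height $0$ and $X_n$ at height $h_n$ (shifting basepoints to $(z,0)$). Since $h_n\to0$ this is again a container for the mGH-convergence and the copies are pairwise disjoint as in Remark~\ref{rem:macshane_mfGH}. On
\[
A:=\big(Y\times\{0\}\big)\cup\bigcup_n\big((B(z,R_n)\cap X_n)\times\{h_n\}\big)
\]
I define $\varphi_{A,k}$ to be $\psi_k$ on $Y$ and $\varphi_{n,k}$ on the $n$-th piece, and let $\varphi_{Z,k}$ be any $1$-Lipschitz McShane extension of $\varphi_{A,k}$ to $Z\times\real$. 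Because $\varphi_{Z,k}$ then restricts to $\varphi_{A,k}$ on $A$, conclusions (1) and (2) hold with $N(R,k):=\min\{n:R_n\ge R\}$, provided $\varphi_{A,k}$ is genuinely $1$-Lipschitz on $A$.

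\textbf{Main obstacle: the $1$-Lipschitz check.} Everything hinges on verifying that $\varphi_{A,k}$ is $1$-Lipschitz on $A$, and this is where the choice of the sum metric is decisive. Pairs inside a single copy are immediate. For a pinned point $x\in B(z,R_n)\cap X_n$ and an arbitrary $y\in Y$, choosing $\tilde x\in Y$ almost realizing the infimum, i.e.\ with $|\varphi_{n,k}(x)-\psi_k(\tilde x)|+d(x,\tilde x)\le\varepsilon_{n,k}(R_n)$, gives
\[
|\varphi_{n,k}(x)-\psi_k(y)|\le\varepsilon_{n,k}(R_n)+d(x,y)\le d(x,y)+h_n ,
\]
which is exactly the sum-metric distance between $(x,h_n)$ and $(y,0)$; the analogous computation for two pinned points $x\in X_n$, $x'\in X_m$ uses $h_n-h_m\ge\delta_n+\delta_m\ge\varepsilon_{n,k}(R_n)+\varepsilon_{m,k}(R_m)$. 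The point I would stress is that the additive height $|s-s'|$ absorbs the constant defect $\varepsilon_{n,k}$ \emph{uniformly in the separation} $d(x,y)$; the Euclidean product, for which $\sqrt{d^2+h^2}\approx d$ for far-apart points, would lose this absorption and the statement would genuinely fail (a pinned $x$ near the basepoint paired with a far $y\in Y$ forces $|\varphi_{n,k}(x)-\psi_k(y)|$ to overshoot $d(x,y)$ by $\sim\varepsilon_{n,k}$, which no vanishing height can repair in the $\ell^2$ metric). Thus the real work is confined to the uniformization and to the elementary bookkeeping of the heights; the gluing and the McShane extension are then formal. I expect the only remaining delicacy to be running a single subsequence and a single sequence $R_n\nearrow\infty$ simultaneously for the at most countably many indices $k$, which is handled by the $\max_k$ in the definition of $\delta_n$ together with a diagonal argument.
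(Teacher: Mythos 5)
Your proposal is correct and takes essentially the same route as the paper, which in fact only sketches this argument (Remark~\ref{rem:macshane_mfGH}): separate the copies $X_n$ and $Y$ inside $Z\times\real$ by shifting basepoints/heights chosen to absorb the gluing defects, glue $\psi_k$ with the $\varphi_{n,k}$, and extend to a $1$-Lipschitz function; your sum-metric bookkeeping, the choice $h_n=2\sum_{m\ge n}\delta_m$, and the diagonal argument over $k$ are precisely the details the remark leaves to the reader. The one caveat is your claim that one ``may assume $Z$ is proper, passing to a proper container if necessary'': this is not available for general complete spaces (and indeed both your uniformization step and the lemma itself fail for, say, $X_n=Y$ an infinite-dimensional Hilbert space with $\varphi_{n,k}$ supported near $2e_n$), but it is harmless here since the lemma is only applied to blow-ups of doubling spaces, whose bounded sets are totally bounded.
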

For $\lambda>0$ and a metric space $X$, we denote by $\lambda X$ the
metric space where the metric on $X$ has been rescaled by $\lambda$,
i.e.~$d_{\lambda X}=\lambda d_X$. Let $\Phi$ be a countable collection
of $1$-Lipschitz functions on $X$ and $p\in X$. Then we denote by
$\Phi_{\lambda,p}$ the collection of $1$-Lipschitz functions on
$\lambda X$ given by:
\begin{equation}
  \label{eq:resc_collection}
  \Phi_{\lambda,p}=\left\{
    \lambda\left(\varphi-\varphi(p)\right): \varphi\in\Phi
  \right\}.
\end{equation}
\begin{defn}[Blow-ups]
  \label{defn:blow_ups}
  A \textbf{blow-up} of $X$ at a point $p$ is a pointed metric space
  $(Y,q)$ such that for some sequence $\lambda_n\nearrow\infty$:
  \begin{equation}
    \label{eq:blow_ups_1}
    (X_n,p_n)=(\lambda_n X,p)\ghcvj (Y,q);
  \end{equation}
  in this case we say that $(Y,q)$ is \textbf{realized} by the sequence
  of rescalings $\{\lambda_n\}$. The set of blow-ups of $X$ at $p$ will
  be denoted by $\tang(X,p)$.
  \par\noindent 
  {Let $(X,\mu)$ be a metric measure space, $p\in X$ a
  basepoint and $\lambda>0$ a dilating factor; we define the
  normalization constant $c_\mu(p,\lambda)$ for the unit ball of
  $\lambda X$ centred at $p$ as follows:
  \begin{equation}
    \label{eq:blow_ups_0}
    c_\mu(p,\lambda) = \left(
      \int_{B_X(p,\lambda^{-1})}(1-\lambda d_X(p,x))\,d\mu(x)
    \right)^{-1}.
  \end{equation}}
{Note that in~(\ref{eq:blow_ups_0})
  we used the subscript $X$ to highlight that balls are taken with
  respect to the metric $d_X$ of $X$.}
A \textbf{blow-up} of $(X,\mu)$ at a point $p$ is a measure-normalized pointed metric space
  $(Y,\nu,q)$ such that for some sequence $\lambda_n\nearrow\infty$:
  \begin{equation}
    \label{eq:blow_ups_2}
    (X_n,\mu_n,p_n)=(\lambda_n X,
    {c_\mu(p,\lambda_n)\,\mu},p)\mghcvj (Y,\nu,q);
  \end{equation}
  in this case we say that $(Y,\nu,q)$ is \textbf{realized} by the sequence
  of rescalings $\{\lambda_n\}$. The set of blow-ups of $X$ at $p$ will
  be denoted by $\tang(X,\mu,p)$.
  \par\noindent A \textbf{blow-up} of $(X,\mu,\Phi)$ at a point $p$ is a function space
  $(Y,\nu,q,\Psi)$ such that for some sequence $\lambda_n\nearrow\infty$:
  \begin{equation}
    \label{eq:blow_ups_3}
    (X_n,\mu_n,p_n,\Phi_n)=(\lambda_n X,
    {c_\mu(p,\lambda_n)\,\mu},,p,\Phi_{\lambda_n,p})\mfghcvj (Y,\nu,q,\Psi);
  \end{equation}
  in this case we say that $(Y,\nu,q,\Psi)$ is \textbf{realized} by the sequence
  of rescalings $\{\lambda_n\}$. The set of blow-ups of $X$ at $p$ will
  be denoted by $\tang(X,\mu,p,\Phi)$.
\end{defn}
The following theorem summarizes variants in the metric setting of
Preiss' phenomenon that tangents of tangents are tangents
\cite{preiss_geometry_measures}; (1) is due to
\cite{ledonne_tangents}, (2) to \cite{gigli_weak_euclidean_tangs}; the
proof of (3) is omitted as can be easily reconstructed from
\cite{gigli_weak_euclidean_tangs}. It is clear that (3) can be
generalized in further directions, e.g.~in the context of blowing-up
pseudodistances. 
\begin{thm}(Shifted rescalings of blow-ups are blow-ups)
  \label{thm:blow_ups_are_blow_ups}
  Let $(X,\mu)$ be a doubling metric measure space and $\Phi$ a
  countable collection of $1$-Lipschitz functions on $X$. Then for
  $\mu$-a.e.~$p\in X$ the following holds:
  \begin{enumerate}
  \item For each $(Y,q)\in\tang(X,p)$, for any
    $(\lambda,q')\in(0,\infty)\times Y$ one has $(\lambda
    Y,q')\in\tang(X,p)$; in particular,
    $\tang(Y,q')\subset\tang(X,p)$;
  \item For each $(Y,\nu,q)\in\tang(X,\mu,p)$, for any
    $(\lambda,q')\in(0,\infty)\times Y$ one has $(\lambda
    Y,
    {c_\nu(q',\lambda)\,\nu,q'})\in\tang(X,\mu,p)$; in
    particular, $\tang(Y,\nu,q')\subset\tang(X,\mu,p)$;
  \item For each $(Y,\nu,q,\Psi)\in\tang(X,\mu,p,\Phi)$, for any
    $(\lambda,q')\in(0,\infty)\times Y$ one has $(\lambda
    Y,
    {c_\nu(q',\lambda)\,\nu,q'}),q',\Psi_{\lambda,q'})\in\tang(X,\mu,p,\Phi)$;
    in particular, $\tang(Y,\nu,q',\Psi)\subset\tang(X,\mu,p,\Phi)$; 
  \end{enumerate}
\end{thm}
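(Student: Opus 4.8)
The plan is to treat the three parts uniformly. All of them follow once one knows that, for $\mu$-a.e.~$p$, blow-ups of $X$ at $p$ are insensitive to bounded re-centerings of the basepoint; this insensitivity is exactly Preiss' phenomenon, for which I would quote \cite{ledonne_tangents} in case (1) and \cite{gigli_weak_euclidean_tangs} in case (2), while case (3) only adds the bookkeeping of the $1$-Lipschitz decorating functions. Throughout write $T_{x,r}(X,\mu)=(rX,c_\mu(x,r)\mu,x)$ for the measure-normalized rescaling of $(X,\mu)$ by $r$ about $x$, with the decorated analogue $T_{x,r}(X,\mu,\Phi)=(rX,c_\mu(x,r)\mu,x,\Phi_{r,x})$.

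First I would realize the target directly through $X$ at moving centers. Suppose $(Y,\nu,q,\Psi)$ is realized by $\lambda_n\nearrow\infty$, so $T_{p,\lambda_n}(X,\mu,\Phi)\mfghcvj(Y,\nu,q,\Psi)$ in a container $(Z,z)$, and pick a representative $x_n\in X$ of $q'$, i.e.~$\iota_n(x_n)\to\iota_\infty(q')$. Since $\iota_n$ is isometric on $\lambda_n X$ and the basepoints converge, $\lambda_n d_X(p,x_n)=d_Z(\iota_n(x_n),\iota_n(p))\to d_Y(q,q')$; hence $x_n\to p$ in $X$ and $\lambda\lambda_n d_X(p,x_n)$ stays bounded. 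By continuity of the rescale-and-reshift operation under pointed measured convergence (the measure normalizations being handled as in~(\ref{eq:blow_ups_0}), which is precisely the delicate measured ingredient supplied by Gigli), one obtains
\[T_{x_n,\lambda\lambda_n}(X,\mu,\Phi)\mfghcvj(\lambda Y, c_\nu(q',\lambda)\nu, q', \Psi_{\lambda,q'}).\]
For the decorating functions this is the explicit identity
\[\lambda\lambda_n\bigl(\varphi-\varphi(x_n)\bigr)=\lambda\bigl[\lambda_n(\varphi-\varphi(p))\bigr]-\lambda\bigl[\lambda_n(\varphi-\varphi(p))\bigr](x_n),\]
whose right-hand side converges to $\lambda(\psi_k-\psi_k(q'))$, the $k$-th member of $\Psi_{\lambda,q'}$: indeed $\lambda_n(\varphi-\varphi(p))\to\psi_k$ in the mfGH sense, and since $x_n$ represents $q'$ the constant $[\lambda_n(\varphi-\varphi(p))](x_n)\to\psi_k(q')$ by (mfGH3). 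Thus the shifted-rescaled blow-up is realized as a limit of rescalings of $X$ taken at the \emph{moving} centers $x_n\to p$.

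The remaining, and main, step is to trade the moving centers for the fixed center $p$. Here I would invoke the Preiss phenomenon in the following form: for $\mu$-a.e.~$p$, any limit $\lim_n T_{x_n,r_n}(X,\mu)$ with $r_n\nearrow\infty$, $x_n\to p$ and $r_n d_X(p,x_n)$ bounded already lies in $\tang(X,\mu,p)$. For the metric statement~(1) this is \cite{ledonne_tangents}, and for the measured statement~(2) it is \cite{gigli_weak_euclidean_tangs}. For~(3) one runs the \emph{same} argument in the enlarged container that additionally records the graphs of the decorating functions: adjoining at most countably many uniformly $1$-Lipschitz coordinates preserves the doubling and compactness structure on which the argument rests, so the proof applies verbatim and supplies a fixed-center realizing sequence $\mu_m\nearrow\infty$ along which $T_{p,\mu_m}(X,\mu,\Phi)\mfghcvj(\lambda Y, c_\nu(q',\lambda)\nu, q', \Psi_{\lambda,q'})$, which is the assertion.

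Finally, the ``in particular'' inclusions $\tang(Y,\nu,q',\Psi)\subset\tang(X,\mu,p,\Phi)$ (and their undecorated analogues) follow formally: any element of $\tang(Y,\nu,q',\Psi)$ is a limit of rescalings $T_{q',\mu_k}(Y,\nu,\Psi)$, each of which lies in $\tang(X,\mu,p,\Phi)$ by the main assertion, and $\tang(X,\mu,p,\Phi)$ is closed under mfGH limits by a diagonal argument. The hard part is the re-centering step: the soft ingredients (Arzel\`a--Ascoli extraction of the function limits, continuity of rescale-and-reshift, and the displayed function identity) are routine, whereas the insensitivity of blow-ups to bounded recenterings genuinely rests on the measurability-plus-density argument behind Preiss' theorem \cite{preiss_geometry_measures}, and is the reason the statement holds only for $\mu$-a.e.~$p$.
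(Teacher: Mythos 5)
Your proposal is correct and takes essentially the same route as the paper, which in fact offers no written proof at all: it cites \cite{ledonne_tangents} for (1) and \cite{gigli_weak_euclidean_tangs} for (2), and states that (3) ``is omitted as can be easily reconstructed'' from the latter. Your moving-center reduction plus the quoted re-centering form of Preiss' phenomenon is precisely that reconstruction, worked out in somewhat more detail than the paper gives.
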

\subsection{Blow-up of Weaver derivations and weak convergence for
  normal currents}
\label{subsec:bwp_wea_norm}
In this subsection we analyze how rescalings affect the modules
$\wder\mu.$ and $\wform\mu.$. When we want to highlight an object that
refers to the rescaled
space $\lambda X$ (resp.~the original space $X$), we add $\lambda X$
(resp.~$X$) to the notations. Recall that in this paper we use the
notation $\glip f.$ for the global 
Lipschitz constant of $f$. We then sketch the details of how to use a
result in \cite[Sec.~7]{cks_metric_diff} to blow-up Weaver derivations.
The following lemma is elementary and the proof is omitted.
\begin{lem}
  \label{lem:fun_transf}
  Let $X$ be a metric space, $\lambda>0$ and $f$ a real-valued
  Lipschitz function defined on $X$. Then $\glipdec f,X.=C$ if and
  only if $\glipdec \lambda f,\lambda X.=C$, and $\glipdec
  \lambda^{-1}f,X.=C$ if and only if $\glipdec f,\lambda X.=C$.
\end{lem}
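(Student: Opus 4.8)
The plan is to prove the stated equivalence by direct computation from the definition of the global Lipschitz constant, checking the two biconditional claims separately. Recall that $\glipdec f,X.$ denotes the smallest constant $C$ such that $|f(x)-f(y)|\le C\, d_X(x,y)$ for all $x,y\in X$. The key observation is that rescaling the metric by $\lambda$ and rescaling the function by $\lambda$ are, in the relevant difference quotients, exactly compensating operations.

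First I would establish the identity $\glipdec \lambda f,\lambda X.=\glipdec f,X.$. Unwinding the definitions, for any $x,y\in X$ the difference quotient defining the Lipschitz constant of $\lambda f$ on $\lambda X$ is
\begin{equation}
  \frac{|(\lambda f)(x)-(\lambda f)(y)|}{d_{\lambda X}(x,y)}
  =\frac{\lambda\,|f(x)-f(y)|}{\lambda\,d_X(x,y)}
  =\frac{|f(x)-f(y)|}{d_X(x,y)},
\end{equation}
since $d_{\lambda X}=\lambda d_X$ by definition. Taking the supremum over all pairs $x\ne y$ shows the two global Lipschitz constants are literally equal, so $\glipdec f,X.=C$ if and only if $\glipdec \lambda f,\lambda X.=C$, which is the first claim.

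For the second claim, $\glipdec \lambda^{-1}f,X.=C$ iff $\glipdec f,\lambda X.=C$, I would apply the first identity with $f$ replaced by $\lambda^{-1}f$: by what was just shown (with the roles set up so the function scaling matches the metric scaling), $\glipdec \lambda^{-1}f,X.=\glipdec \lambda\cdot\lambda^{-1}f,\lambda X.=\glipdec f,\lambda X.$. Alternatively one computes directly that the difference quotient of $f$ on $\lambda X$ equals $\lambda^{-1}$ times the difference quotient of $f$ on $X$, which is precisely the difference quotient of $\lambda^{-1}f$ on $X$. Either route gives the equality of constants and hence the biconditional.

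There is no genuine obstacle here: the statement is elementary precisely because both operations act multiplicatively and homogeneously on the first-order difference quotients, so the only content is bookkeeping to confirm the factors of $\lambda$ and $\lambda^{-1}$ cancel in the advertised way. The one point requiring minor care is being consistent about whether $f$ is regarded as a function on $X$ or on $\lambda X$ (the underlying set is the same, only the metric changes), and ensuring the scaling of the function is paired with the correct scaling of the metric; once that is fixed, both biconditionals follow from the single cancellation $\lambda\cdot\lambda^{-1}=1$.
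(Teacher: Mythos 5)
Your proposal is correct: the paper explicitly omits the proof of this lemma as elementary, and your direct computation with difference quotients (using $d_{\lambda X}=\lambda d_X$ so that the factors of $\lambda$ cancel) is exactly the routine verification the paper has in mind. Nothing further is needed.
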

\begin{defn}[Rescaling of an Alberti representation]
  \label{defn:alb_rep_resc}
  Let $\albrep.=[Q,w]$ be an Alberti representation of the measure
  $\mu$ on $X$; let $\lambda>0$ and define
  $\rescf_{\lambda}:\frags(X)\to\frags(\lambda X)$ as follows:
  \begin{equation}
    \label{eq:alb_rep_resc_1}
    \begin{aligned}
      \dom\left(
        \rescf_\lambda(\gamma)
      \right) &= \lambda\dom\gamma\\
      \rescf_\lambda(\gamma)(t)&=\gamma(t/\lambda).
    \end{aligned}
  \end{equation}
  Let $p\in X$ and define $\rescf_{\lambda,p}:\frags(X)\to\frags(\lambda X)$ as follows:
  \begin{equation}
    \label{eq:alb_rep_resc_2}
    \begin{aligned}
      \dom\left(
        \rescf_{\lambda,p}(\gamma)
      \right) &= \lambda(\dom\gamma-s_{\gamma,p})\\
      \rescf_{\lambda,p}(\gamma)(t)&=\gamma(t/\lambda+s_{\gamma,p}),
    \end{aligned}
  \end{equation}
  where $s_{\gamma,p}$ is such that:
  \begin{equation}
    \label{eq:alb_rep_resc_3}
    d\left(
      \gamma(s_{\gamma,p}),p
    \right) = \min_{t\in\dom\gamma}
    d\left(
      \gamma(t),p
    \right).
  \end{equation}
  Note that a measurable choice of $\rescf_{\lambda,p}$ can be
  obtained via a measurable selection principle. The rescalings
  $\albrep\lambda.$ and $\albrep \lambda,p.$ are defined as follows:
  \begin{equation}
    \label{eq:alb_rep_resc_4}
    \begin{aligned}
      \albrep\lambda.&=[\rescf_{\lambda}{}_{\#}Q,w]\\
      \albrep\lambda,p.&=[\rescf_{\lambda,p}{}_{\#}Q,w].
    \end{aligned}
  \end{equation}
  Note that if $\albrep.$ is $[C_0,D_0]$-biLipschitz on $X$, then
  $\albrep\lambda.$ and $\albrep\lambda,p.$ are
  $[C_0,D_0]$-biLipschitz on $\lambda X$.
\end{defn}
\def\ldec{^{(\lambda X)}}
\def\xdec{^{(X)}}
\pcreatenrm{wflx}{\wform\mu;\lambda X.}{|}
\pcreatenrm{wfx}{\wform\mu;X.}{|}
\begin{lem}
  \label{lem:locnrm_transf}
  Let $f\in\lipalg X.$; then
  \begin{equation}
    \label{eq:locnrm_transf_s1}
    \lambda^{-1}\wfxnrm d\xdec f.=\wflxnrm d\ldec f..
  \end{equation}
\end{lem}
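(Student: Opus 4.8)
The plan is to unwind the definitions of the local norm on $\wform\mu.$ on both the original space $X$ and the rescaled space $\lambda X$, and show that rescaling the metric by $\lambda$ scales the local norm of a differential $df$ by exactly $\lambda^{-1}$. The cleanest route is to use the characterization of the local norm provided by Theorem~\ref{thm:char_loc_norm} together with the speed-of-Alberti-representation bookkeeping from Definitions~\ref{defn:alb_speed} and~\ref{defn:alb_rep_resc}. The key conceptual point is that a differential $df$ pairs with derivations, which correspond (via Theorem~\ref{thm:der_alb_corr} and Definition~\ref{defn:alberti_to_derivation}) to Alberti representations, and the $f$-speed of a representation transforms in a controlled way under the rescaling operator $\rescf_\lambda$. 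So the whole identity~\eqref{eq:locnrm_transf_s1} should reduce to tracking how metric differentials and derivatives along fragments rescale.

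First I would record the elementary scaling of metric differentials: if $\gamma\in\frags(X)$ and $\gamma_\lambda=\rescf_\lambda(\gamma)\in\frags(\lambda X)$ as in~\eqref{eq:alb_rep_resc_1}, then for $\lebmeas.$-a.e.\ point one has $\metdiff{\gamma_\lambda}(t)=\metdiff\gamma(t/\lambda)$, because the metric is scaled by $\lambda$ while the parameter is scaled by $\lambda$ as well, and these two factors cancel in the difference quotient~\eqref{eq:met_diff_1}. At the same time, the function $f$ on $X$ is compared against $f$ viewed on $\lambda X$ (the underlying set and function values are unchanged), so $(f\circ\gamma_\lambda)'(t)=\lambda^{-1}(f\circ\gamma)'(t/\lambda)$ by the chain rule. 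Consequently, if $\albrep.$ has $f$-speed $\ge\sigma$ on $X$, then $\albrep\lambda.$ has $f$-speed $\ge\lambda^{-1}\sigma$ on $\lambda X$, since the inequality~\eqref{eq:alb_speed_1} picks up exactly one factor of $\lambda^{-1}$ from the derivative and no net factor from $\metdiff{}$. The biLipschitz constants are preserved by the final remark in Definition~\ref{defn:alb_rep_resc}.

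With this in hand the equality follows by a two-sided estimate. For the inequality $\wflxnrm d\ldec f.\le\lambda^{-1}\wfxnrm d\xdec f.$: apply Theorem~\ref{thm:char_loc_norm} on $X$ to produce, for any compact $K$ and $\varepsilon>0$, a $(1,1+\varepsilon)$-biLipschitz representation of $\mu\on K$ with $f$-speed $\ge\wfxnrm d\xdec f.-\varepsilon$; rescale it via $\rescf_\lambda$ to get a representation on $\lambda X$ whose $f$-speed is $\ge\lambda^{-1}(\wfxnrm d\xdec f.-\varepsilon)$, which by the definition of the local norm (as the essential supremum of achievable speeds) forces $\wflxnrm d\ldec f.\ge\lambda^{-1}(\wfxnrm d\xdec f.-\varepsilon)$ — giving one direction after letting $\varepsilon\to0$. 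The reverse inequality is symmetric: one rescales in the opposite direction using $\rescf_{\lambda^{-1}}$, or equivalently applies the same argument with the roles of $X$ and $\lambda X$ interchanged, noting that the rescaling operation is invertible and $\lambda^{-1}(\lambda X)=X$ isometrically. Combining both inequalities yields~\eqref{eq:locnrm_transf_s1}.

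**The main obstacle** I anticipate is making the ``local norm equals the best achievable $f$-speed'' statement fully rigorous at the level of local (pointwise-a.e.) norms rather than merely at the level of the global norm. Theorem~\ref{thm:char_loc_norm} is stated for compact sets $K$ and gives a global speed bound $\ge\wfnrm df.-\varepsilon$, so to conclude the \emph{pointwise} identity~\eqref{eq:locnrm_transf_s1} a.e.\ one must localize: restrict to sets where $\wfxnrm d\xdec f.$ is nearly constant, run the argument there, and then exhaust $X$ by countably many such pieces. This localization is routine given the module structure and Remark~\ref{rem:restr}, but it is where the care is needed; the scaling arithmetic itself is elementary once the metric-differential and chain-rule factors are pinned down.
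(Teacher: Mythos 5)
Your proposal is correct and follows essentially the same route as the paper's proof: both apply Theorem~\ref{thm:char_loc_norm} to produce a $(1,1+\varepsilon)$-biLipschitz Alberti representation of nearly optimal $f$-speed, push it forward under $\rescf_\lambda$, read off the factor $\lambda^{-1}$ in the rescaled speed to bound the local norm on $\lambda X$ from below, and obtain the reverse inequality by the symmetric (inverse rescaling) argument. The only cosmetic difference is that you track the identity $\metdiff\rescf_\lambda(\gamma)(t)=\metdiff\gamma(t/\lambda)$ directly, whereas the paper reaches the same estimate via the $(1,1+\varepsilon)$-biLipschitz bounds on the metric differential.
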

\begin{proof}
  Assume that $\wfxnrm d\xdec f.\ge\alpha$ on a set $S$. Fix
  $\varepsilon>0$; then by 
  {Theorem~\ref{thm:char_loc_norm}} one can
  find a $(1,1+\varepsilon)$-biLipschitz Alberti representation
  $\albrep S.$ of $\mu\on S$ with $f$-speed $\ge\alpha-\varepsilon$.
  This means that for $Q$-a.e.~$\gamma$ and
  $\lebmeas.\on\dom\gamma$-a.e.~$t$ one has:
  \begin{equation}
    \label{eq:locnrm_transf_p1}
    (f\circ\gamma)'(t)\ge(\alpha-\varepsilon).
  \end{equation}
  Now the rescaled representation $\albrep S,\lambda.$ gives a
  $(1,1+\varepsilon)$-biLipschitz (wrt.~the metric on $\lambda X$)
  Alberti-representation of $\mu\on S$ with a lower bound on the
  $f$-speed, that can be obtained using:
  \begin{equation}
    \label{eq:locnrm_transf_p2}
    \begin{split}
      (f\circ\rescf_\lambda(\gamma))'(t)&=\frac{d}{dt}\left(
        f(\gamma(t/\lambda))
      \right)\\ &=\lambda^{-1}(f\circ\gamma)'(t/\lambda)\ge\lambda^{-1}(\alpha-\varepsilon)\\
      &\ge\lambda^{-1}(1+\varepsilon)^{-1}(\alpha-\varepsilon)\metdiff\rescf_\lambda(\gamma)(t),
    \end{split}
\end{equation}
  where the metric differential $\rescf_\lambda(\gamma)(t)$ is
  computed with respect to the metric on $\lambda X$. Thus:
  \begin{equation}
    \label{eq:locnrm_transf_p3}
    \wflxnrm d\ldec f.\ge\lambda^{-1}\alpha \quad\text{$\mu$-a.e.~on $S$.}
  \end{equation}
\end{proof}
\def\bdec{_{\text{\normalfont blow}}}
\def\ndec{^{(n)}}
\def\idec{^{(\infty)}}
\pcreatenrm{wd}{\wder\mu.}{|}
\pcreatenrm{wf}{\wform\mu.}{|}
\pcreatenrm{wfy}{\wform\nu.}{|}
\begin{thm}[Blow-up of Weaver derivations]
  \label{thm:weaver_blowup}
  Let $(X,\mu)$ be a complete separable doubling metric measure space where
  $\mu$ is an asymptotically doubling Radon measure. Let $\Phi$ be a countable
  collection of real-valued $1$-Lipschitz functions on $X$.
  Let $D\in\wder \mu;X.$ be of the form $D=D_{\albrep.}$ where $\albrep.$ is a
  $[C_0,D_0]$-biLipschitz Alberti representation of $\mu$. Then there
  is a $\mu$-full measure Borel set $X\bdec$ such that for each $p\in
  X\bdec$ and each $(Y,\nu,q,\Psi)\in\tang(X,\mu,p,\Phi)$ one can
  blow-up $D$ as follows. Assume that $(Y,\nu,q,\Psi)$ is realized by some
  sequence of rescalings $\{\lambda_n\}_n$ and let
  $\albrep n.=\albrep \lambda_n,p.$ and $D\ndec=D_{\albrep n.}$ be
  the corresponding derivation in $\wder\mu_n;X_n.$. Then there is an Alberti
  representation $\albrep\infty.=[Q_\infty,1]$ of $\nu$ such that, defining
  $D\idec=D_{\albrep\infty.}$, the following holds:
  \begin{description}
  \item[(Bw-up1)] $\spt Q_\infty$ consists of lines in $Y$ with
    constant speed in $[C_0,D_0]$;
  \item[(Bw-up2)] For each $\varphi\in\Phi$ and $\gamma$ in $\spt
    Q_\infty$ there is a $c_{\varphi,\gamma}\in[-1,1]$ such that, if
    $\psi\in\Psi$ denotes the corresponding blow-up of $\varphi$:
    \begin{equation}
      \label{eq:weaver_blowup_s1}
      (\psi\circ\gamma)'(t)=c_{\varphi,\gamma}\metdiff\gamma(t)\quad(\forall t\in\real);
    \end{equation}
  \item[(Bw-up3)] Suppose that
    $(X_n,\mu_n,x_n,\Phi_{\lambda_n,x})\mfghcvj(Y,\nu,y,\Psi)$ in the
    container $(Z,z)$; then up to passing to a subsequence (which
    might depend on the container) one can assume that for each
    $(g,f)\in\ccompact Z.\times \lipalg Z.$ one has:
  \end{description}
  \begin{equation}
    \label{eq:weaver_blowup_s2}
    \lim_{n\to\infty}\int g\,D\ndec f\,d\mu_n=\int g\,D\idec f\,d\nu.
  \end{equation}
\end{thm}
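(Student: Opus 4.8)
The plan is to blow up the Alberti representation $\albrep.$ in three coordinated steps: first understand what happens to individual fragments under the shifted rescaling $\rescf_{\lambda_n,p}$, then extract a limiting representation via weak* compactness on the space of fragments in the container, and finally identify the limit object using the convergence of the $\Phi$-functions to establish all three conclusions.

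First I would analyze the fragments themselves. Since $\albrep.$ is $[C_0,D_0]$-biLipschitz, each rescaled fragment $\rescf_{\lambda_n,p}(\gamma)$ is again $[C_0,D_0]$-biLipschitz on $X_n=\lambda_nX$ (as recorded in Definition~\ref{defn:alb_rep_resc}), and the reparametrization in~(\ref{eq:alb_rep_resc_2}) is arranged so that each fragment attains its minimal distance to $p$ at parameter $0$. This basepoint-centering is exactly what keeps mass from escaping under rescaling. Viewing everything inside a fixed container $(Z,z)$ furnished by Lemma~\ref{lem:mfgh_macshane}, the biLipschitz bound gives equi-Lipschitz control, so by an Arzel\`a--Ascoli argument in the Fell topology on $\frags(Z)$ the rescaled fragments subconverge, and a $[C_0,D_0]$-biLipschitz fragment centered at the origin with unbounded domain is forced to be a full bi-infinite line of constant speed in $[C_0,D_0]$. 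This will yield \textbf{(Bw-up1)} once I push the argument to the level of the measures $Q_n=\rescf_{\lambda_n,p\,\#}Q$.

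Next I would pass the measures $Q_n$ to a weak* limit $Q_\infty$ on $\frags(Z)$. The defining identity~(\ref{eq:alb_rep_1}) for $\albrep n.$ represents $\mu_n$, and since $\mu_n\wkcvj\nu$ by the mGH-convergence, I expect $Q_\infty$ (with weight identically $1$, after absorbing $w$ into the parametrization via the biLipschitz speed bounds) to represent $\nu$, giving an Alberti representation $\albrep\infty.=[Q_\infty,1]$ supported on lines; this is where the bulk of the measure-theoretic bookkeeping lives. For \textbf{(Bw-up2)}, I would use property (mfGH3): along each rescaled fragment, $\varphi_{n,k}(x_n)\to\psi_k(y)$, and since $\varphi$ is $1$-Lipschitz and the blow-up line has constant speed, the difference quotients of $\psi\circ\gamma$ stabilize to a constant slope $c_{\varphi,\gamma}\in[-1,1]$; the $1$-Lipschitz bound on $\psi$ forces $|c_{\varphi,\gamma}|\le1$, giving~(\ref{eq:weaver_blowup_s1}). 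For \textbf{(Bw-up3)} I would test the defining formula~(\ref{eq:alberti_to_derivation_1}) for $D\ndec$ against $(g,f)\in\ccompact Z.\times\lipalg Z.$, and pass to the limit using the weak* convergence $Q_n\wkcvj Q_\infty$ together with the convergence of the integrands $(f\circ\gamma)'$ along the fragments; this is the content of~(\ref{eq:weaver_blowup_s2}).

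The main obstacle I anticipate is the interchange of limits in step three: the integrand $(wg)\circ\gamma\,(f\circ\gamma)'$ in~(\ref{eq:alberti_to_derivation_1}) involves the \emph{derivative} $(f\circ\gamma)'$, which is only an $L^1$-object along each fragment and need not converge pointwise under the rescaling. Controlling this requires the equi-biLipschitz bounds to furnish uniform integrability of the speeds, and it requires the result cited from \cite[Sec.~7]{cks_metric_diff} on the stability of such integral pairings under blow-up — essentially a lower-semicontinuity/continuity statement for the velocity functionals along $Q_n\wkcvj Q_\infty$. Handling the weight $w$ consistently (so that the limit weight is genuinely $1$) and ensuring that no mass concentrates on degenerate fragments are the delicate points; everything else reduces to the compactness and selection arguments already set up in Section~\ref{sec:blow_weaver}.
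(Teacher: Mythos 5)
Your proposal reproduces the coarse skeleton of the paper's argument (rescale the representation, extract a weak* limit of the fragment measures, identify the limit), but the two elementary mechanisms you propose for \textbf{(Bw-up1)} and \textbf{(Bw-up2)} are exactly the steps that fail, and the machinery that replaces them is absent. A $[C_0,D_0]$-biLipschitz curve defined on all of $\real$ need \emph{not} have constant speed (e.g.\ $t\mapsto(t,\tfrac{1}{10}\sin t)$ in $\real^2$ is biLipschitz with non-constant speed), so equi-Lipschitz compactness plus basepoint-centering cannot force the limits to be constant-speed lines; similarly, \textbf{(mfGH3)} is only a pointwise-convergence statement and cannot force $\psi\circ\gamma$ to be affine along the limit curves. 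In the paper both conclusions come from a preliminary decomposition of each fragment into a ``regular'' part (pieces close to constant-speed segments on which the first $\lfloor S\rfloor$ functions of $\Phi$ are close to affine maps) and an irregular remainder, combined with a measure-differentiation argument: at points $p$ of a suitable full-measure set the irregular parts carry negligible mass on $B(p,r)$ relative to the normalization $1/c_{\mu'}(p,r^{-1})$ as $r\to0$, so only regular pieces survive the blow-up. This is also the \emph{only} place where genericity of $p$ enters; your argument never uses genericity at all, yet the theorem is false at non-generic points, so the full-measure set $X_{\mathrm{blow}}$ has to be produced by some such selection.

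A second gap concerns fragments versus curves. Elements of $\frags(X)$ have disconnected domains, and rescaling magnifies the gaps, so Fell-topology limits of rescaled fragments need not be curves at all, let alone bi-infinite lines; moreover the pairing $\gamma\mapsto\int g\circ\gamma\,(f\circ\gamma)'\,dt$ is not continuous on fragments, so your final limit-interchange cannot be run on $(\rescf_{\lambda_n,p})_{\#}Q$ directly. The paper resolves both problems at once by working in a Banach container and applying a filling map that interpolates across the gaps: the measures that converge are $Q_n=c_{\mu'}(p,r_n^{-1})\,r_n\,(\mathrm{Fill}\circ\mathrm{Rep}_n)_{\#}\,Q$ restricted to the good (regular) fragments, where the normalization factor and the restriction are precisely what make the mass estimates and tightness work; your unnormalized pushforward omits all three ingredients. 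Finally, for \textbf{(Bw-up3)} the issue is not uniform integrability of speeds: the paper proves that the pairing above is continuous on closed, equi-Lipschitz families of \emph{curves} with domains in a fixed bounded interval, which reduces to the weak* continuity of the derivation $\partial_x\in\wder{\lebmeas.}.$ on the real line --- and this continuity statement is available only after filling, which is another reason that step cannot be skipped.
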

\begin{rem}
  \label{rem:curren_reform}
The statement~(\ref{eq:weaver_blowup_s2}) has a cleaner interpretation
in the language of metric currents \cite{curr_alb}: the metric
currents:
\begin{equation}
  \label{eq:curr_reform_1}
  T_n(g,f)=\int g\,D\ndec f\,d\mu_n
\end{equation}
are converging to the normal current:
\begin{equation}
  \label{eq:curr_reform_2}
  T(g,f)=\int g\,D\idec f\,d\nu
\end{equation}
in the weak topology.
\end{rem}
\begin{rem}
  \label{rem:porous_nullity}
  One can also remove from Theorem~\ref{thm:weaver_blowup} the
  assumption that $X$ is doubling. In that case one has 
  to replace $X$ by an appropriate full-measure Borel $\tilde{X}\subset X$ and blow-up
  $\tilde{X}$. However, note that if porous sets are not $\mu$-null,
  there will be a set of positive measure where $\tilde{X}$ and $X$ do
  not have the same blow-ups (e.g.~it might happen that one cannot
  apply Gromov's compactness Theorem to $X$). However, if one uses ultrafilters, one can show
  that if $(Y,y)$ is a blow-up of $X$ at $x\in \tilde{X}$, then there
  is a blow-up $(\tilde{Y},\nu,y)$ of $(\tilde{X},\mu\on\tilde{X})$ at
  $x$ such that $\tilde{Y}\subset Y$.
\end{rem}
\def\pars{{\rm PAR}(\varepsilon{},S)}%
\def\fillfrag{\text{\normalfont Fill}}
\def\rprm{\text{\normalfont Rep}}
\begin{proof}
  The proof can be reconstructed from the argument in
  \cite[Sec.~7]{cks_metric_diff} where the result is stated in a less
  general context: $(X,\mu)$ is a differentiability space and $\Phi$
  is a finite set of Lipschitz functions. The only item that requires
  further justification is \textbf{(Bw-up3)}. We highlight the
  additional arguments; we will refer to the notation and setting in
  \cite[Sec.~7]{cks_metric_diff}; in particular, $r_n=\lambda_n^{-1}$ and
  for $\gamma\in\frags(X)$ we define:
  \begin{equation}
    \label{eq:weaver_blow_up_p1}
    \Psi(\gamma)=\mpush\gamma.(\lebmeas.\on\dom\gamma).
  \end{equation}
  {However note that we are using a 
    convention to normalize the measures different from that
    in~\cite{cks_metric_diff}, where measures are
    rescaled to give unit mass to the open unit ball. This essentially
    amounts to replacing some terms in~\cite[Sec.~7]{cks_metric_diff}
    of the form $\mu'(B(x,r))$ with $1/c_{\mu'}(x,r^{-1})$, see the following
    discussion for more details.}
  In \cite[Sec.~7]{cks_metric_diff} we have performed some preliminary
  steps:
  \begin{description}
  \item[(PS1)] If $\albrep.=[Q,w]$, instead of working with $\mu$, we
    work with the measure $\mu'$ corresponding to the Alberti
    representation $\albrep.=[Q,1]$. This can be done as $\mu$ is
    asymptotically
    doubling and as $D_{\albrep.}$ is not affected on the set where $w\ne0$;
  \item[(PS2)] For $\gamma\in\spt Q$ we have split $\Psi(\gamma)$ in a
    regular part $\Psi_{\pars}(\gamma)$ and an irregular part
    $\Psi^c_{\pars}(\gamma)$ (compare equation \sync
    eq:cks_reg_dec.\ in \cite[Sec.~7]{cks_metric_diff}). There
    are straightforward modifications for the regularity requirements
    to handle a countable collection $\Phi$; here $\varepsilon$ and
    $S$ play the r\^ole of parameters selecting ($S$) how long
    $\gamma$ is, and ($\varepsilon$) how
    close $\gamma$ is to a constant speed segment on which the first $\lfloor S\rfloor$
    elements of $\Phi$ are close to affine maps;
    \def\pars{{\rm PAR}(\varepsilon_m{},S_m)}%
  \item[(PS3)] We have chosen an $l^1$-sequence
    $\{\varepsilon_m\}\subset(0,\infty)$ and used
    (Lemma \sync lem:cks_reg_dec_borel.\ in \cite[Sec.~7]{cks_metric_diff}) measure-differentiation to find a
    Borel set $U$ and scales $s_m\le S_m$ such that:
    \begin{equation}
      \label{eq:weaver_blow_up_p2}
      \begin{aligned}
        \mu(X\setminus U)&\le\sum_{m=1}^\infty\varepsilon_m\\
        \mu'^c_{\pars}\left(
          B(x,r)
        \right)
        &\le\frac{\varepsilon_m}{
          {c_{\mu'}(x,r^{-1})}}\quad(\forall
        \,x\in U,r\le s_m),
      \end{aligned}
    \end{equation}
    where
    \begin{equation}
      \label{eq:weaver_blow_up_p3}
      \begin{aligned}
        \mu'_{\pars}&=\int\Psi_{\pars}(\gamma)\,dQ(\gamma)\\
        \mu'^c_{\pars}&=\int\Psi^c_{\pars}(\gamma)\,dQ(\gamma).
      \end{aligned}
    \end{equation}
    {Note that the second equation in~(\ref{eq:weaver_blow_up_p2}) differs from the
    corresponding one in Lemma \sync lem:cks_reg_dec_borel.\ of
    \cite[Sec.~7]{cks_metric_diff}, as we have replaced
    {$\mu'\left(
          B(x,r)
        \right)$} with $1/c_{\mu'}(x,r^{-1})$. This is possible as
      $\mu'$ is asymptotically doubling and the function
      $1-r^{-1}d(\cdot,x)$ has value at least $\frac{1}{2}$ on
      $B(x,r/2)$; this also implies that for each fixed $R_0\ge 1$ one can
      find $C(R_0)$ such that
      $c_{\mu'}(x,r^{-1})\lesssim_{C(R_0)}c_{\mu'}(x,R_0^{-1}r^{-1})$:
      we will not insist on this point any further;}
  \item[(PS4)] We take $x\in U$ and, up to enlarging $(Z,z)$, we
    assume that $(Z,z)$ is a Banach space.
  \end{description}
  \def\pars{{\rm PAR}(\varepsilon_m{},S_m)}%
  We now fix $R_0$ large enough so that, given $(g,f)\in\ccompact
  Z.\times \lipalg Z.$, $\spt g\subset B(z,R_0)$.
  As $|D_{\albrep n.}f|\le D_0\glip f.$, we conclude from equation
  \sync eq:alberti_blow_up_psix.\ in \cite[Sec.~7]{cks_metric_diff} that:
    \def\stddeno{c_{\mu'}(p,r_n^{-1})}
  \begin{equation}    
    \label{eq:weaver_blow_up_p4}
    \begin{split}
    \left|
      {\stddeno}\int_{\sball X_n,p,R_0.}g
      D_{\albrep n.}f\,d\mu'^c_{\pars}\right|&\le\|g\|_\infty D_0\glip
    f.
    {\stddeno}\\
    &\times\mu'^c_{\pars}\left(\sball
        X,p,r_nR_0.\right);
  \end{split}
\end{equation}
  note that we are regarding $\mu'^c_{\pars}$ as a measure on $X_n$
  and are suppressing from the notation the isometric embeddings in
  $Z$.
  \par As $Z$ is a Banach space, one can introduce a ``filling map''
  $\fillfrag$ which fills fragments to curves (details are discussed
  in \cite[Sec.~7]{cks_metric_diff}). Using the Hausdorff topology on fragments
  one can introduce reparametrization maps $\{\rprm_{n}\}_n$ which agree, up
  possibly to a translation, with the map $\{\rescf_{\lambda_n,x}\}_n$.
  \def\gamman{\Gamma_{X_n}}%
  \def\gammatn{\tilde\Gamma_{X_n}}%
    \def\pars{{\rm PAR}(\varepsilon_m{},S_m/r_n)}%
\par Let $\gamman\subset\frags(X)$ denote the set of those
$[C_0,D_0]$-biLipschitz fragments which
intersect $\scball X,x,2R_0r_n.$, and by $\gammatn\subset\gamman$ the Borel subset of those $\gamma$
such that:
\begin{equation}
  \label{eq:weaver_blow_up_p5}
  \chi_{\sball
    X_n,x,R_0.}\Psi_{\pars}\left(\rprm_n(\gamma)\right)\ne0.
\end{equation}
Then combining equations \sync eq:cks_alberti_blow_up_esttwo.\ and  
\sync eq:cks_alberti_blow_up_estthree.\ in \cite[Sec.~7]{cks_metric_diff} with the definition
of $D\ndec$ one arrives at the estimate:
\begin{equation}
  \label{eq:weaver_blow_up_p6}
  \begin{split}
  \biggl|
    \int_{X_n}gD\ndec f\,d\mu'_n &-
    {c_{\mu'}(p,r_n^{-1})}
    \int_{\gammatn}dQ(\gamma)\int r_n
    g\circ(\fillfrag\circ\rprm_n)(\gamma)\,\\
    &\times
    \left(
      f\circ(\fillfrag\circ\rprm_n)(\gamma)
    \right)'\chi_{B_{X_n}(x,R_0)}\,d\Psi\left(
      \fillfrag\circ\rprm_n(\gamma)
    \right)
    \biggr|\\
    &\le \|g\|_\infty D_0\glip f.\biggl(
      O(\varepsilon_m)+C(C_0,D_0)\varepsilon_m\\
      &\mskip 18mu\times 
      {c_{\mu'}(p,r_n^{-1}){\mu'(B_X(p,2r_nR_0))}}
      \biggr).
\end{split}
\end{equation}
{Note that by using the property of $\mu'$ being asymptotically doubling one can find a
uniform bound on $c_{\mu'}(p,r_n^{-1}){\mu'(B_X(p,2r_nR_0))}$ which
  depends on $R_0$ but not on $n$.}
Letting:
\begin{equation}
  \label{eq:weaver_blow_up_p7}
  Q_n=
  {c_{\mu'}(p,r_n^{-1})}\,r_n\mpush \fillfrag\circ\rprm_n(\gamma).Q\on\gammatn
\end{equation}
one can use the mass estimate
(equation \sync eq:cks_alberti_blow_up_pelevenplusone.\ in \cite[Sec.~7]{cks_metric_diff})
to show that $Q_n\wkcvj Q_{R_0}$ by passing to a subsequence. By a
diagonal argument (compare the proof of
Lemma \sync lem:local_alberti_blow_up.\ in \cite[Sec.~7]{cks_metric_diff}) which involves
$R_0\nearrow\infty$ one can obtain a limit $Q_\infty$ of the $Q_{R_0}$
and an Alberti representation $\albrep\infty.=[Q_\infty,1]$. To
prove~(\ref{eq:weaver_blowup_s2}) it suffices to use the definition of
the weak* topology for Radon measures by showing that if $\Omega$ is a
closed subset of $\curves(Z)$ with all elements of $\Omega$ having
their domain contained in a given bounded interval of $\real$, and
with $\sup_{\Omega\ni\gamma}\glip\gamma.<\infty$, then the map:
\begin{equation}
  \label{eq:weaver_blow_up_p8}
  \Omega\ni\gamma\mapsto\int g\circ\gamma (f\circ\gamma)' d\Psi(\gamma)
\end{equation}
is continuous. This reduces to the weak* continuity of the derivation
$\partial_x\in\wder{\lebmeas.}.$ on the real line \cite{weaver00}.
\end{proof}
\begin{cor}
  \label{cor:weaver_basis_blow}
  Let $(X,\mu)$ be a metric measure space with $\mu$ asymptotically
  doubling and assume that $\wform\mu.$ is free on
  $\{d\varphi_i\}_{i=1}^N$ where each $\varphi_i$ is $1$-Lipschitz,
  and let $\Phi=\{\varphi_i\}_{i=1}^N$. Then for $\mu$-a.e.~$x$ and for
  each $(Y,\nu,y,\Psi)\in\tang(X,\mu,x,\Phi)$ the following holds:
  \begin{description}
  \item[(FormBlow1)] The submodule of $\wform\nu.$ generated by the
    $\{d\psi_i\}_{i=1}^N$ (where $\{\psi_i\}_{i=1}^N=\Psi$) is free;
  \item[(FormBlow2)] For each $a\in\real^N$ there is an Alberti
    representation $[Q_a,1]$ of $\nu$, where $Q_a$ is concentrated on
    the set of unit-speed lines of $Y$ satisfying:
    \begin{equation}
      \label{eq:weaver_basis_blow_s1}
      \sum_{i=1}^Na_i\left(
        \psi_i\circ\gamma(t)
        -
        \psi_i\circ\gamma(s)
      \right) = \wfnrm
      \sum_{i=1}^Na_id\varphi_i(x)
      .(t-s)\quad(\forall\, t\ge s);
    \end{equation}
  \item[(FormBlow3)] Moreover, if $(X,\mu)$ is a differentiability
    space, one also has:
    \begin{equation}
      \label{eq:weaver_basis_blow_s2}
      \wfnrm
      \sum_{i=1}^Na_id\varphi_i(x)
      . =
      \biglip\left(
        \sum_{i=1}^Na_i\varphi_i
      \right)(x) =
      \glip\sum_{i=1}^Na_i\psi_i..
    \end{equation}
  \end{description}
\end{cor}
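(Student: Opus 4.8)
The plan is to reduce everything to the scalar functions $f_a=\sum_{i=1}^Na_i\varphi_i$ on $X$ and their blow-ups $\psi_a=\sum_{i=1}^Na_i\psi_i$ on $Y$, and to run the biLipschitz Alberti representations supplied by Theorem~\ref{thm:char_loc_norm} through the blow-up machine of Theorem~\ref{thm:weaver_blowup}. Because $a\mapsto\wfnrm\sum_{i=1}^Na_id\varphi_i(x).$ is a seminorm, hence continuous in $a$, it is enough to argue for $a$ in a countable dense subset of $\real^N$ and then extend by continuity. I would therefore first fix the $\mu$-full set of $x$ on which Theorems~\ref{thm:weaver_blowup} and~\ref{thm:blow_ups_are_blow_ups} both apply, on which each $x\mapsto\wfnrm\sum_ia_id\varphi_i(x).$ is approximately continuous and—since $\{d\varphi_i\}$ is a free basis—strictly positive for $a\ne0$, and, for \textbf{(FormBlow3)}, on which the quantitative characterization~(\ref{eq:diff_summary_s2}) holds.

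The step I expect to be the crux is the upper bound $\glip\psi_a.\le\biglip f_a(x)$ hidden in \textbf{(FormBlow3)}. The naive estimate only bounds $\glip\psi_a.$ by the \emph{asymptotic} Lipschitz constant of $f_a$ at $x$, which may strictly exceed $\biglip f_a(x)$ on a set of positive measure (think of the blow-up of a suitable function on a fat Cantor subset of $\real$); the \emph{measured} blow-up must be used to discard the negligible high-slope directions. Writing a point of $Y$ as $\psi_a(q')=\lim_n\lambda_n\bigl(f_a(x_n')-f_a(x)\bigr)$ with $x_n'\to x$ and $\lambda_n d_X(x_n',x)\to d_Y(q',q)$, the elementary inequality $|f_a(x_n')-f_a(x)|\le d_X(x_n',x)\varlip f_a(x,d_X(x_n',x))$ yields only the \emph{radial} estimate $|\psi_a(q')-\psi_a(q)|\le\biglip f_a(x)d_Y(q',q)$. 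To promote this to arbitrary pairs $q',q''$ I would apply Preiss' phenomenon (Theorem~\ref{thm:blow_ups_are_blow_ups}(3)) with $\lambda=1$ to re-base $(Y,\nu,q)$ at $q''$: the recentred space, carrying the functions $\psi_i-\psi_i(q'')$, is again a blow-up of $(X,\mu,\Phi)$ at the \emph{same} point $x$, so the radial estimate applied there gives $|\psi_a(q')-\psi_a(q'')|\le\biglip f_a(x)d_Y(q',q'')$, whence $\glip\psi_a.\le\biglip f_a(x)$.

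For \textbf{(FormBlow2)} I would, for each $\varepsilon>0$, use Theorem~\ref{thm:char_loc_norm} to obtain a $(1,1+\varepsilon)$-biLipschitz Alberti representation near $x$ with $f_a$-speed $\ge\wfnrm df_a.-\varepsilon$, and blow up the associated derivation via Theorem~\ref{thm:weaver_blowup}. By \textbf{(Bw-up1)}--\textbf{(Bw-up2)} the blow-up is carried by lines of constant speed in $[1,1+\varepsilon]$ along which $(\psi_a\circ\gamma)'$ is a constant multiple of $\metdiff\gamma$; the speed lower bound, together with approximate continuity of the local norm at $x$, forces this constant to be $\ge\wfnrm\sum_ia_id\varphi_i(x).-\varepsilon$. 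For the matching upper bound I would combine the pairing inequality $(\psi_a\circ\gamma)'\le\wfynrm d\psi_a.\metdiff\gamma$ with the lower semicontinuity of the local norm of forms under blow-up: since by Lemma~\ref{lem:locnrm_transf} the (renormalized) form $\sum_ia_id\varphi_i$ keeps its local norm on each $X_n$, approximate continuity gives $\wfynrm d\psi_a.\le\wfnrm\sum_ia_id\varphi_i(x).$ $\nu$-a.e. Diagonalising $\varepsilon\searrow0$ and reparametrising to unit speed yields $[Q_a,1]$ with the exact rate in~(\ref{eq:weaver_basis_blow_s1}). These lines realise slope $\wfnrm\sum_ia_id\varphi_i(x).$, which equals $\biglip f_a(x)$ by~(\ref{eq:diff_summary_s2}); hence $\glip\psi_a.\ge\biglip f_a(x)$, and combined with the previous paragraph this closes \textbf{(FormBlow3)}.

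Finally \textbf{(FormBlow1)} follows from \textbf{(FormBlow2)}: since $[Q_a,1]$ represents $\nu$, its lines cover $\nu$-a.e.\ point and give $\wfynrm\sum_ia_id\psi_i.\ge\wfnrm\sum_ia_id\varphi_i(x).>0$ $\nu$-a.e.\ for every constant $a\ne0$. If a relation $\sum_i\lambda_id\psi_i=0$ held on a set of positive $\nu$-measure with $\lambda\in L^\infty(\nu)^N$ not identically zero, then restricting by Lusin's theorem to a smaller positive-measure set on which $\lambda$ is close to a constant $a\ne0$ would contradict this strict positivity. Hence $\{d\psi_i\}_{i=1}^N$ are $L^\infty(\nu)$-independent and the submodule they generate is free of rank $N$.
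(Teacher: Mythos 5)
Your overall architecture matches the paper's: blow up the $(1,1+\varepsilon)$-biLipschitz representations supplied by Theorem~\ref{thm:char_loc_norm} through Theorem~\ref{thm:weaver_blowup} one function at a time, then pass to all of $\real^N$ by density and linearity of blow-ups; and your re-basing argument via Theorem~\ref{thm:blow_ups_are_blow_ups}(3), which upgrades the radial estimate to $\glip\psi_a.\le\biglip f_a(x)$ for $f_a=\sum_ia_i\varphi_i$, is a correct filling-in of a fact the paper merely asserts. The genuine problem is your upper bound in \textbf{(FormBlow2)}.

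There you invoke the inequality $\wfynrm d\psi_a.\le\wfnrm\sum_ia_id\varphi_i(x).$ $\nu$-a.e., attributed to Lemma~\ref{lem:locnrm_transf} plus approximate continuity. Lemma~\ref{lem:locnrm_transf} only compares the local norms on $X$ and on the rescaled copies $\lambda X$ of the \emph{same} space; it says nothing about the limit $(Y,\nu)$, and no such semicontinuity theorem is available. In fact the inequality is false in the stated generality (recall that \textbf{(FormBlow1)}--\textbf{(FormBlow2)} do not assume differentiability). The local norm of a form is a supremum of pairings against derivations, and a blow-up can acquire derivations that $(X,\mu)$ does not possess --- this is exactly why the analytic dimension can increase under blow-up, the reason Lemma~\ref{lem:regular_diff} needs iterated blow-ups --- so the norm of the blown-up form can strictly exceed the norm at the base point. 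Concretely, let $\sigma$ be a singular doubling measure on $\real$ having Lebesgue measure among its tangent measures at a.e.\ point (a Riesz product with sparse lacunary frequencies works), put $X=\real^2$, $\mu=\lebmeas.\times\sigma$, and $\varphi_1=(x_1+x_2)/\sqrt{2}$. Since the vertical projection of $\mu$ is singular, every Alberti representation of a measure $\ll\mu$ is carried by fragments with vanishing vertical derivative; hence $\wder\mu.$ is free of rank one on $\partial_{x_1}$, $\wform\mu.$ is free on $d\varphi_1$, and $\wfnrm d\varphi_1.=1/\sqrt{2}$ $\mu$-a.e. Yet at a.e.\ point there is a blow-up $(Y,\nu)=(\real^2,c\,\lebmeas 2.)$, on which $\psi_1=(x_1+x_2)/\sqrt{2}$ has $\wfynrm d\psi_1.=1$, because diagonal lines now carry representations of $\nu$. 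Your pairing inequality $(\psi_1\circ\gamma)'\le\wfynrm d\psi_1.\metdiff\gamma$ then only gives the trivial bound $1$ rather than $1/\sqrt{2}$, and the equality~(\ref{eq:weaver_basis_blow_s1}) does not follow.

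The repair is the paper's mechanism, which lives on $X$ rather than on $Y$: for \emph{any} Alberti representation of a measure $\ll\mu$, the $f_a$-speed is bounded above a.e.\ by $\wfnrm df_a.$ (a statement about fragments of $X$, valid without differentiability); by approximate continuity this is close to $\wfnrm df_a(x).$ near $x$, and this two-sided speed bound on the approximating representations is inherited by the limit lines produced in Theorem~\ref{thm:weaver_blowup}, giving speeds in $\left[\wfnrm df_a(x).-\varepsilon,\,(1+\varepsilon)\wfnrm df_a(x).\right]$; compactness as $\varepsilon\searrow0$ then yields~(\ref{eq:weaver_basis_blow_s1}). (In a differentiability space your route can be salvaged, since then $\wfynrm d\psi_a.\le\glip\psi_a.\le\biglip f_a(x)=\wfnrm df_a(x).$ by your own re-basing bound and~(\ref{eq:diff_summary_s2}); but the corollary claims \textbf{(FormBlow2)} without that hypothesis.) Your arguments for \textbf{(FormBlow1)} and \textbf{(FormBlow3)}, which use only the lower bound and the re-basing estimate, go through once this step is fixed.
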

\begin{proof}
  \par\noindent\texttt{Step1: Blowing-up a single function $f$.}
  \par Assume that $f$ is $1$-Lipschitz with $df\ne0$ $\mu$-a.e.; by
  Theorem~\ref{thm:char_loc_norm}, for each $n$, $\mu$ admits a
  $(1,1+\frac{1}{n})$-biLipschitz Alberti representation $\albrep n.$
  with $f$ speed $\ge\wfnrm df.-1/n$; note also that $\wfnrm df.$ is
  necessarily an upper bound on the $f$-speed of any Alberti representation. We use
  Theorem~\ref{thm:weaver_blowup} to find a $\mu$-full measure subset
  $U$ where one can blow-up each $\albrep n.$ and on which $\wfnrm
  df.$ is approximately continuous. For a Lebesgue point $x$ of $U$
  let $(Y,\nu,y,\{g\})\in\tang(X,\mu,x,\{f\})$, and denote by $\albrep
  n,\infty.=[Q_n,1]$ the corresponding blow-up of $\albrep
  n.$. Then $Q_n$ is concentrated on the set of lines in $Y$ with
  constant speed in $[1,1+1/n]$ and which satisfy:
  \begin{multline}
    \label{eq:weaver_basis_blow_p1}
    g(\gamma(t)) -
    g(\gamma(s)) \in \left[
      \left(
        \wfnrm df(x). - \frac{1}{n}
      \right)(t-s),
      \left(1+\frac{1}{n}\right)
        \wfnrm df(x).
      (t-s)
    \right]\\
    (\forall\, t\ge s).
  \end{multline}
  By a compactness argument one obtains an Alberti representation
  $\albrep \infty.=[Q_\infty,1]$ of $\nu$ where $Q_\infty$ is
  concentrated on the set of unit speed lines of $Y$ satisfying:
  \begin{equation}
    \label{eq:weaver_basis_blow_p2}
    g(\gamma(t))
    -
    g(\gamma(s)) = \wfnrm df(x).(t-s)\quad(\forall\, t\ge s).
  \end{equation}
  \par\noindent\texttt{Step 2: Approximation by rational
    combinations.}
  \par We apply \texttt{Step 1} to each function
  $\Phi_\rational=\left\{\sum_{i=1}^Na_i\varphi_i\right\}_{a\in\rational^N}$,
  and one also assumes that $x$ is an approximate continuity point of each map:
  \begin{equation}
    \label{eq:weaver_basis_blow_p3}
    \tilde{x}\mapsto\wfnrm
    \sum_{i=1}^Na_id\varphi_i(\tilde{x})
    .\quad(a\in\rational^N).
  \end{equation}
  To obtain \textbf{(FBlow1)}, \textbf{(FBlow2)} one then uses the
  fact that $\rational^N$ is dense in $
  {\real^N}$ and the linearity
  of blow-ups, i.e.~that the blow-up of $\sum_{i=1}^Na_i\varphi_i$ is
  $\sum_{i=1}^Na_i\psi_i$. \textbf{(FBlow3)} follows from the
  quantitative characterization of differentiability spaces
  (\textbf{(Schioppa)} in Theorem~\ref{thm:diff_summary}) and from the fact that given a Lipschtiz
  function $f$, at $\mu$-a.e.~$x$ any blow-up of $f$ at $x$ has
  Lipschitz constant at most $\biglip f(x)$.
\end{proof}
\section{A local approach to fail differentiability}
\label{sec:loc_app}
In this section we discuss a methodology for the gluing part of the
argument. Even though we end up using ``cubical'' tiles in
Section~\ref{sec:diff_blow}, other geometries for tiles might be
helpful in deducing other properties of blow-ups.%
\def\vdec{_{\text{\normalfont var}}}
\def\xvdec{_{x,\text{\normalfont var}}}
The following theorem summarizes 
{sufficient} conditions to show that
a metric measure space is not a differentiability space; we follow an
approach motivated by Weaver derivations; for the proof see
\cite[Sec.~5.3]{deralb} (or \cite{bate_thesis_final} for an approach
via charts).
\begin{thm}
  \label{thm:indep_functions}
  Let $(X,\mu)$ be a metric measure space with $\mu(X)=1$.
  Assume that for some $(\alpha\vdec,L)\in(0,\infty)^2$ for each
  $\varepsilon>0$ there is an $L$-Lipschitz function $f_\varepsilon$
  such that:
  \begin{description}
  \item[(SmDiff)] $\mu\left(
      \left\{
        x\in X: \wfnrm df_\varepsilon(x).>\varepsilon
      \right\}
    \right)\le\varepsilon$;
  \item[(PosVar)] There is a Borel set $X\vdec\subset X$ with
    $\mu(X\setminus X\vdec)\le\varepsilon$, and such that for each
    $x\in X\vdec$ there is an $x\vdec=x\vdec(x)$ satisfying:
    \begin{align}
      \label{eq:indep_function_s1}
      d(x,x\vdec)&\in(0,\varepsilon]\\
      \label{eq:indep_function_s2}
      \left|
        f_\varepsilon(x) - f_\varepsilon(x\vdec)
      \right| &\ge\alpha\vdec d(x,x\vdec).
    \end{align}
  \end{description}
  Then $(X,\mu)$ is not a differentiability space.
\end{thm}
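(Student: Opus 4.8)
The plan is to prove the contrapositive's negation: assume the hypotheses \textbf{(SmDiff)} and \textbf{(PosVar)} hold and show that $(X,\mu)$ violates the quantitative characterization of differentiability spaces, namely the equality $\biglip f = \wfnrm df.$ ($\mu$-a.e.) from \textbf{(Schioppa)} in Theorem~\ref{thm:diff_summary}. The strategy is to build a single fixed Lipschitz function $f$ out of the family $\{f_\varepsilon\}$ for which the local norm $\wfnrm df.$ is small on a large set while $\biglip f$ stays bounded below, contradicting the equality. The two conditions are precisely engineered for this: \textbf{(SmDiff)} controls $\wfnrm df_\varepsilon.$ in measure, while \textbf{(PosVar)} forces a genuine oscillation of $f_\varepsilon$ at small scales, which is what drives $\biglip f$ up.

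\emph{First} I would choose a summable sequence $\varepsilon_k\searrow 0$ (say $\varepsilon_k = 2^{-k}$) and set $f = \sum_k c_k f_{\varepsilon_k}$ for suitable positive weights $c_k$, or more robustly pass to a subsequence and exploit the weak* / pointwise limit so that the bad behaviour accumulates on a set of positive measure. The key bookkeeping is Borel--Cantelli: the sets $G_k = \{\wfnrm df_{\varepsilon_k}. > \varepsilon_k\}$ and $X\setminus (X\vdec)_k$ each have measure $\le\varepsilon_k$, so by summability $\mu(\limsup_k G_k)=0$ and $\mu$-a.e.\ point lies in all but finitely many of the good sets. \emph{Then} on a full-measure set one has $\wfnrm df_{\varepsilon_k}.\le\varepsilon_k\to 0$, so by subadditivity of the local norm (Definition~\ref{defn:local_norm}(2)) the contribution of the tail is negligible and $\wfnrm df.$ is effectively $0$ (or arbitrarily small) on a large set. \emph{Meanwhile}, \textbf{(PosVar)} guarantees that for $\mu$-a.e.\ $x$ and infinitely many $k$ there is a companion point $x\vdec$ at distance $\le\varepsilon_k$ with $|f_{\varepsilon_k}(x)-f_{\varepsilon_k}(x\vdec)|\ge\alpha\vdec\, d(x,x\vdec)$, so $\varlip f_{\varepsilon_k}(x, d(x,x\vdec))\ge\alpha\vdec$ at arbitrarily small scales, yielding $\biglip f(x)\ge\alpha\vdec>0$ after one controls cross-terms.

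\emph{The main obstacle} I expect is reconciling the two estimates on the \emph{same} function and the \emph{same} full-measure set, because \textbf{(SmDiff)} is an infinitesimal ($\wfnrm df.$, i.e.\ a $\limsup$ of variation) statement whereas \textbf{(PosVar)} is a statement at a fixed finite scale $\le\varepsilon$. The subtlety is that $\biglip f_\varepsilon$ could in principle be large even where $\wfnrm df_\varepsilon.$ is small, since $\wfnrm df.=\biglip f$ is exactly what fails; so I must argue at the level of a \emph{limit} function (or germ) where the finite-scale oscillations of the individual $f_{\varepsilon_k}$ persist as genuine infinitesimal big-Lip while the local norms vanish. Concretely, I would interleave the scales so that the oscillation witnessed by \textbf{(PosVar)} at scale $\varepsilon_k$ dominates the global Lipschitz contributions of all $f_{\varepsilon_j}$ with $j>k$ (using the uniform bound $\glip f_{\varepsilon_j}.\le L$ and the summability of weights), thereby transferring the finite-scale lower bound into a true $\limsup$ lower bound on $\biglip f$.

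\emph{Finally}, once I have a single Lipschitz $f$ with $\biglip f\ge\alpha\vdec>0$ on a set of positive measure but $\wfnrm df.$ arbitrarily small there, this directly contradicts \eqref{eq:diff_summary_s2}, so $(X,\mu)$ cannot be a differentiability space. Alternatively, and perhaps more cleanly given the derivation-theoretic framing, I would phrase the conclusion via \textbf{(WDens)} and the module $\wder\mu.$: the oscillation from \textbf{(PosVar)} produces, in the limit, infinitely many independent germs/directions of differentiation that the small local norm cannot account for, showing the space of germs of Lipschitz functions is infinite-dimensional (the viewpoint of \cite[Sec.~5.3]{deralb}); this is the route the cited proof presumably takes, and I would follow it to avoid delicate diagonal bookkeeping on a single explicit $f$.
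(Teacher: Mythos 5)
Your target---contradicting the quantitative characterization~(\ref{eq:diff_summary_s2}) by manufacturing a single Lipschitz function whose big-Lip constant exceeds its local norm on a positive-measure set---is the right one, and it is the spirit of the proof the paper points to (the paper itself gives no proof here; it cites \cite[Sec.~5.3]{deralb} and \cite{bate_thesis_final}). But both of your concrete constructions fail, and they fail at exactly the point that makes the theorem nontrivial. Take first $f=\sum_k c_k f_{\varepsilon_k}$. Since the $f_{\varepsilon_k}$ are only $L$-Lipschitz and nothing forces cancellation, Lipschitzness of $f$ requires $\sum_k c_k<\infty$ (you assume this), hence $c_k\to 0$. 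The oscillation supplied by \textbf{(PosVar)} at level $k$ then enters with the factor $c_k$: at the witness scale $d_k=d(x,x\vdec)\le\varepsilon_k$ the best conceivable bound is $\varlip f(x,d_k)\ge c_k\alpha\vdec-(\text{cross terms})$. Because $\biglip f(x)$ is a $\limsup$ over scales tending to zero, only the tail of the levels can contribute, so the lower bound your construction yields is $\limsup_k c_k\alpha\vdec=0$; meanwhile the only available upper bound on the differential is $\wfnrm df.\le\sum_j c_j\varepsilon_j$, a fixed \emph{positive} number. Hence no choice of weights, subsequence, or interleaving can ever produce a point with $\biglip f>\wfnrm df.$: the contradiction evaporates together with the weights. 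Note also that your interleaving controls only the tail $j>k$; the head terms $j<k$ carry the \emph{large} weights, and the bound $\glip f_{\varepsilon_j}.\le L$ gives a contribution $L d_k\sum_{j<k}c_j$, which swamps $c_k\alpha\vdec d_k$. Your fallback, a weak*/pointwise limit of the $f_{\varepsilon_k}$ themselves, loses \textbf{(PosVar)} altogether: variation at scales $\le\varepsilon_k\to 0$ is not stable under pointwise (or locally uniform) convergence unless the approximation error is controlled relative to those vanishing scales, and generically the oscillation simply disappears in the limit.

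What is actually needed, and what the cited arguments do, differs in two essential ways. First, the functions are combined with weights of order one, and Lipschitzness of the combination is rescued not by shrinking weights but by (i) making each summand uniformly small in sup norm via a sawtooth truncation with a pigeonhole on the offset (exactly the device of Step 3 in the proof of Theorem~\ref{thm:blow_up_diff}), and (ii) exploiting the contradiction hypothesis itself: if $(X,\mu)$ \emph{were} a differentiability space, then \textbf{(SmDiff)} together with~(\ref{eq:diff_summary_s2}) (or, in sharper localized form, the Approximation Scheme, Theorem~\ref{thm:approx}) makes each previously chosen $f_{\varepsilon_j}$ genuinely flat at small scales near a.e.~point, so that at the scales where the newly chosen function oscillates the old ones are nearly constant and Lipschitz constants do not add. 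Second, this forces the $\varepsilon_k$ to be chosen \emph{inductively}, each below the Egorov/flatness scales and sup-norm thresholds attached to the finitely many functions already chosen; a sequence fixed in advance such as $\varepsilon_k=2^{-k}$, plus Borel--Cantelli on a priori fixed sets, cannot encode this dependence. Your closing appeal to ``infinitely many independent germs'' names the conclusion of \cite[Sec.~5.3]{deralb} rather than an argument for it: producing even one Lipschitz function whose germ escapes its differential is precisely the inductive construction sketched above, and it is the content your proposal would still need to supply.
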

\def\gdec{_{\text{\normalfont grad}}}
\def\lsdec{_{\text{\normalfont loss}}}
\begin{defn}[Tiles]
  \label{defn:tile}
  Let
  $(L,\alpha\vdec,\varepsilon\gdec,\varepsilon\lsdec,r)\in(0,\infty)^5$;
  an
  \textbf{$[L,\alpha\vdec,\varepsilon\gdec,\varepsilon\lsdec,r]$-tile}
  at $x$ is a pair $(S_x,f_x)$ such that:
  \begin{description}
  \item[(T1)] $S_x$ is 
    {closed}, $S_x\subset\ball x,r.$ and $\diam S_x\approx_L r$;
  \item[(T2)] $\mu(S_x)\gtrsim_L\mu\left( \ball x,r. \right)$;
  \item[(T3)] $f_x$ is $L$-Lipschitz and for each $p\in S_x$ one has:
    \begin{equation}
      \label{eq:tile_1}
      \left|
        f_x(p)
      \right| \le L d(p,S_x^c);
    \end{equation}
  \item[(T4)] $\mu\left(
      \left\{
        p\in S_x: \wfnrm df_x(p).>\varepsilon\gdec
      \right\}
    \right)\le\varepsilon\lsdec\mu(S_x)$;
  \item[(T5)] There is a Borel set $S\xvdec\subset S_x$ such that
    $\mu(S_x\setminus S\xvdec)\le\varepsilon\lsdec\mu(S_x)$ and for
    each $y\in S\xvdec$ there is a $y\vdec=y\vdec(y)$ such
    that:
    \begin{align}
      \label{eq:tile_2}
      d(y,y\vdec)&\in(0,\varepsilon\lsdec]\\
      \label{eq:tile_3}
      \left|
        f_x(y) - f_x(y\vdec)
      \right| &\ge\alpha\vdec d(y,y\vdec).
    \end{align}
  \end{description}
\end{defn}
\begin{thm}
  \label{thm:glue_tiles}
  Let $(X,\mu)$ be a complete metric measure space with $\mu(X)=1$, and such that
  the Vitali Covering Lemma holds for $\mu$. Assume that there are
  constants
  $(L,\alpha\vdec,\varepsilon\gdec,\varepsilon\lsdec)\in(0,\infty)^4$
  such that for $\mu$-a.e.~$x\in X$ there is a sequence of scales
  $\{r_n=r_n(x)\searrow0\}_n$ such that for each $n$ there is an
  $[L,\alpha\vdec,\varepsilon\gdec,\varepsilon\lsdec,r_n]$-tile at
  $x$. Then the assumption of Theorem~\ref{thm:indep_functions} holds
  with $L$ replaced by $2L$, with $\alpha\vdec$ replaced by $\alpha\vdec/2$, and with
  $\varepsilon=\max(\varepsilon\gdec,\varepsilon\lsdec)$. 
\end{thm}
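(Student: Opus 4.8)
The plan is to extract, by iterating the Vitali Covering Lemma, a countable pairwise disjoint family of tiles exhausting $\mu$-a.e.~of $X$, to glue the tile functions into a single $2L$-Lipschitz function $f_\varepsilon$ by extending each $f_x$ by $0$ outside its tile, and then to read off \textbf{(SmDiff)} and \textbf{(PosVar)} for $f_\varepsilon$. For the selection, observe that for $\mu$-a.e.~$x$ the balls $\ball x,r_n(x).$ carry tiles and have radii $\searrow 0$, so they form a fine cover of a $\mu$-full set. At each stage I would extract a \emph{finite} disjoint subfamily; by \textbf{(T2)} its tiles capture a definite fraction $c_0=c_0(L)\in(0,1)$ of the mass of the current domain, and since the finite union of the (closed) tiles is closed, its complement is open and I may restrict the next stage to balls contained in it, which forces tiles chosen at different stages to be disjoint. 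Iterating, the uncovered mass decays geometrically, yielding pairwise disjoint closed tiles $\{(S_i,f_i)\}_i$ with $\mu(X\setminus\bigcup_iS_i)=0$; write $V_i\subset S_i$ for the variation sets of \textbf{(T5)}.

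Set $f_\varepsilon=f_i$ on $S_i$ and $f_\varepsilon=0$ on the null complement. Property \textbf{(T3)} is exactly what makes the gluing Lipschitz: for $p\in S_i$ and $q\notin S_i$ one has $d(p,S_i^c)\le d(p,q)$, so $|f_\varepsilon(p)|\le Ld(p,q)$; distinguishing whether two points lie in one tile (bound $L$), in a tile and its complement (bound $L$), or in two distinct tiles (sum of the two previous bounds, $2L$) shows $f_\varepsilon$ is $2L$-Lipschitz. For \textbf{(SmDiff)}, locality of derivations (Lemma~\ref{lem:locality_derivations}) gives $\chi_{S_i}df_\varepsilon=\chi_{S_i}df_i$, hence $\wfnrm df_\varepsilon.=\wfnrm df_i.$ $\mu$-a.e.~on $S_i$; since $\varepsilon\ge\varepsilon\gdec$, \textbf{(T4)} bounds $\mu(\{p\in S_i:\wfnrm df_\varepsilon(p).>\varepsilon\})\le\varepsilon\lsdec\mu(S_i)$, and summation over the disjoint a.e.-covering family gives $\mu(\{\wfnrm df_\varepsilon.>\varepsilon\})\le\varepsilon\lsdec\le\varepsilon$.

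For \textbf{(PosVar)} put $X\vdec=\bigcup_iV_i$; then $\mu(X\setminus X\vdec)\le\sum_i\varepsilon\lsdec\mu(S_i)=\varepsilon\lsdec\le\varepsilon$. Fix $x\in V_i$ with witness $y\vdec$ from \textbf{(T5)}, so $d:=d(x,y\vdec)\in(0,\varepsilon\lsdec]\subset(0,\varepsilon]$ and $|f_i(x)-f_i(y\vdec)|\ge\alpha\vdec d$. If $y\vdec\in S_i$, then $f_\varepsilon$ agrees with $f_i$ at both points, so $x\vdec:=y\vdec$ gives $|f_\varepsilon(x)-f_\varepsilon(x\vdec)|\ge\alpha\vdec d(x,x\vdec)$, which is stronger than required. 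The factor $\tfrac12$ and the real work appear when $y\vdec\notin S_i$, since then $f_\varepsilon$ ignores $f_i$ at $y\vdec$. Here I would argue by a dichotomy on the oscillation of $f_i$ on $S_i\cap\ball x,d.$ (a compact set, the space being complete and doubling, hence proper): if that oscillation is $\ge\tfrac{\alpha\vdec}{2}d$, a near-maximizer $p\in S_i$ has $|f_\varepsilon(x)-f_\varepsilon(p)|=|f_i(x)-f_i(p)|\ge\tfrac{\alpha\vdec}{2}d\ge\tfrac{\alpha\vdec}{2}d(x,p)$, establishing \textbf{(PosVar)} with constant $\alpha\vdec/2$ at $x\vdec:=p$.

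The main obstacle is the complementary subcase, where $f_i$ is nearly constant on $S_i\cap\ball x,d.$ yet, by \textbf{(T5)}, changes by $\ge\alpha\vdec d$ on the way to a witness \emph{outside} $S_i$: this is variation of the global function $f_i$ that the glued $f_\varepsilon$ does not see, and it can genuinely destroy the oscillation at such boundary-layer points. The delicate step I would concentrate on is showing this subcase can be excluded or absorbed into the $\varepsilon\lsdec$-loss. Every such $x$ satisfies $d(x,S_i^c)\le\varepsilon\lsdec$, i.e.~it lies in the $\varepsilon\lsdec$-boundary layer of $S_i$; so the task reduces to arranging, through the Vitali selection, that the chosen scales dominate $\varepsilon\lsdec$ (forcing $y\vdec\in S_i$), or to restricting $V_i$ to the $\varepsilon\lsdec$-interior of $S_i$ and bounding the measure of the discarded boundary layer using \textbf{(T1)}--\textbf{(T2)} together with the doubling property. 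The difficulty is that no regularity of $\mu$ near $\partial S_i$ is available, so controlling this boundary layer without losing the covering property is exactly where the argument must be made quantitative.
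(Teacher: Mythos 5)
Your Vitali selection, the gluing rule ($f=f_i$ on $S_i$, $f=0$ off the union), the $2L$-Lipschitz estimate via \textbf{(T3)}, and the proof of \textbf{(SmDiff)} via locality of $d$ and \textbf{(T4)} all coincide with the paper's argument (the paper simply invokes the Vitali Covering Lemma where you re-derive it by iterated finite extraction, which is fine). The genuine gap is exactly the case you leave open in \textbf{(PosVar)}: a point $y\in V_i$ whose \textbf{(T5)}-witness $y\vdec$ lies outside $S_i$. Your oscillation dichotomy dies in its second branch, and both remedies you float are dead ends in this abstract setting: there is no hypothesis that lets you bound $\mu$ of a boundary layer of a tile (as you yourself observe), and nothing forces witnesses to stay inside the tile. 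So the proposal is incomplete precisely at the point where the halving of $\alpha\vdec$ in the conclusion has to be earned.

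The idea you are missing is that the problematic case resolves itself with \emph{no} measure estimate, by taking the new witness on the boundary of the tile. The tile function $f_i$ vanishes off $S_i$: this is how \textbf{(T3)} is meant to be read (the paper's proof uses the identity $f(y)=f_i(y)-f_i(y\vdec)$ for $y\vdec\notin S_i$, and in the application in Theorem~\ref{thm:blow_up_diff} the tile function $h=\psi_Q\tilde f$ is explicitly cut off outside the tile). Granting this, \textbf{(T5)} with an external witness is not lost variation but a lower bound on the value of $f$ at $y$ itself:
\begin{equation}
  |f(y)|=|f_i(y)-f_i(y\vdec)|\ \ge\ \alpha\vdec\, d(y,y\vdec)\ \ge\ \alpha\vdec\, d(y,S_i^c),
\end{equation}
since $y\vdec\in S_i^c$. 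Now pick $\tilde y\in\partial S_i$ with
\begin{equation}
  d(y,\tilde y)\ \le\ \min\left(2\,d(y,S_i^c),\ d(y,y\vdec)\right);
\end{equation}
by \textbf{(T3)} one has $f(\tilde y)=f_i(\tilde y)=0$, hence
\begin{equation}
  |f(y)-f(\tilde y)|=|f_i(y)|\ \ge\ \alpha\vdec\, d(y,S_i^c)\ \ge\ \frac{\alpha\vdec}{2}\,d(y,\tilde y),
  \qquad d(y,\tilde y)\le d(y,y\vdec)\le\varepsilon\lsdec\le\varepsilon,
\end{equation}
so $\tilde y$ serves as the point $x\vdec$ required by \textbf{(PosVar)}, with constant $\alpha\vdec/2$. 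In other words, the variation you feared was ``destroyed'' by the cut-off resurfaces as a large value of $|f|$ relative to the distance to $S_i^c$, and is witnessed against the zero set $\partial S_i$ of the glued function; this single observation replaces your dichotomy, and with it the rest of your argument is exactly the paper's proof.
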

\begin{proof}
  By \textbf{(T1)}, \textbf{(T2)} tiles are 
  {closed}
  and comparable to balls in
  measure and shape; thus by the Vitali Covering Lemma we can find
  tiles $\{(S_{x_i},f_{x_i})\}_i$ such that the sets $\{S_{x_i}\}$ are
  pairwise disjoint and $\mu(X\setminus\bigcup_iS_{x_i})=0$. We let
  $f=f_{x_i}$ on each $S_{x_i}$ and $f=0$ on
  $X\setminus\bigcup_iS_{x_i}$.
  \par\noindent\texttt{Step 1: $f$ is $2L$-Lipschitz.}
  \par We will use \textbf{(T3)} to compare values of $f$ at
  points belonging to different tiles. If $x,y\in S_{x_i}$ we have:
  \begin{equation}
    \label{eq:glue_tiles_p1}
    \left|
      f(x) - f(y)
    \right| =
    \left|
      f_{x_i}(x) - f_{x_i}(y)
    \right| \le Ld(x,y)
  \end{equation}
  because $f_{x_i}$ is $L$-Lipschitz. If $x\in S_{x_i}$ and $y\in
  S_{x_j}$ for $i\ne j$:
  \begin{equation}
    \label{eq:glue_tiles_p2}
    \begin{split}
      \left|
        f(x) - f(y)
      \right| &\le
      \left|
        f_{x_i}(x)
      \right| +
      \left|
        f_{x_j}(y)
      \right| \le
      L \left[
        d(x,S_{x_i}^c) + d(y,S_{x_j}^c)
      \right]\\ &\le
      2Ld(x,y).
    \end{split}
  \end{equation}
  If $x\in S_{x_i}$ and $y\in X\setminus\bigcup_i S_{x_i}$ we have:
  \begin{equation}
    \label{eq:glue_tiles_p3}
    \left|
      f(x) - f(y)
    \right| =
    \left|
      f_{x_i}(x)
    \right| \le Ld(x,S_{x_i}^c)\le Ld(x,y).
  \end{equation}
  If $x,y\in X\setminus\bigcup_iS_{x_i}$:
  \begin{equation}
    \label{eq:glue_tiles_p4}
    \left|
      f(x) - f(y)
    \right| = 0.
  \end{equation}
  \par\noindent\texttt{Step 2: \textbf{(SmDiff)} holds with
    $\varepsilon=\max\{\varepsilon\gdec,\varepsilon\lsdec\}$.}
  \par We will use \textbf{(T4)} on each tile.
  The exterior differential $d$ is a local operator \cite{weaver00}, i.e.:
  \begin{equation}
    \label{eq:glue_tiles_p5}
    df=df_{x_i}\quad\text{$\mu\on S_{x_i}$-a.e.}
  \end{equation}
  Thus:
  \begin{equation}
    \label{eq:glue_tiles_p6}
    \begin{split}
      \mu\left(
        \left\{
          x\in X: \wfnrm df(x).>\varepsilon
        \right\}
        \right) &\le
        \sum_i \mu\left(
        \left\{
          x\in S_{x_i}: \wfnrm df_{x_i}(x).>\varepsilon\gdec
        \right\}
      \right)\\ &\le
      \sum_i\varepsilon\lsdec\mu(S_{x_i})\\ &\le
      \varepsilon.
    \end{split}
  \end{equation}
  \par\noindent\texttt{Step 3: \textbf{(PosVar)} holds with
    $\alpha\vdec/2$ replacing $\alpha\vdec$.}
  \par We will use \textbf{(T5)} and the fact that $f$ and $f_{x_i}$ vanish on the
  boundary of each tile. Let $X\vdec=\bigcup_iS_{x_i,\vdec}$ so that $\mu(X\setminus
  X\vdec)\le\varepsilon$. Let $y\in S_{x_i,\vdec}$ and assume that the 
  $y\vdec$ corresponding to $f_{x_i}$ and $y$ is in $S_{x_i}$. Then:
  \begin{equation}
    \label{eq:glue_tiles_p7}
    \left|
      f(y) - f(y\vdec)
    \right| \ge
    \left|
      f_{x_i}(y) - f_{x_i}(y\vdec)
    \right| \ge
    \alpha\vdec d(y,y\vdec).
  \end{equation}
  If $y\vdec$ does not lie in $S_{x_i}$ we conclude that:
  \begin{equation}
    \label{eq:glue_tiles_p8}
    \left|f(y)\right|
    =
    \left|f_{x_i}(y)-f_{x_i}(y\vdec)\right|
    \ge\alpha\vdec d(y,y\vdec)\ge\alpha\vdec d(y,S_{x_i}^c).
  \end{equation}
  We then choose $\tilde{y}\in\partial S_{x_i}$ such that:
  \begin{equation}
    \label{eq:glue_tiles_p9}
    d(y,\tilde{y})\le\min\left(
      2d(y,S_{x_i}^c),d(y,y\vdec)
    \right),
  \end{equation}
  so that we have:
  \begin{equation}
    \label{eq:glue_tiles_p10}
    \begin{aligned}
      \left|f(y)-f(\tilde{y})\right|
      &=
      \left|f_{x_i}(y)\right|
      \ge\frac{\alpha\vdec}{2}d(y,\tilde{y})\\
      d(y,\tilde{y})&\le\varepsilon\lsdec\le\varepsilon.
    \end{aligned}
  \end{equation}
\end{proof}
\section{Blow-ups of differentiability spaces}
\label{sec:diff_blow}
\subsection{Blow-ups of differentiability spaces are differentiability
  spaces}
\label{subsec:diff_blow_diff}
\def\cubes{\mathscr{Q}}
\def\anndec{_{\text{\normalfont ann}}}
\def\dsdec{_{\text{\normalfont dens}}}
\def\todec{_{\text{\normalfont tan},1}}
\def\tddec{_{\text{\normalfont tan},2}}
\def\ttdec{_{\text{\normalfont tan},3}}
\def\ndfdec{_{\text{\normalfont ndiff}}}
\def\ctdec{_{\text{\normalfont cut}}}
\def\hudec{_{\text{\normalfont Hau}}}
\def\appdec{_{\text{\normalfont app}}}
\def\nvdec{_{n,\text{\normalfont var}}}
\def\linfty{\ell^\infty}
\pcreatenrm{wfxn}{\wform\mu_n.}{|}
\def\vseq#1.{\text{Seq}(#1)}
\def\dnet{\mathscr{N}}
\begin{thm}
  \label{thm:blow_up_diff}
  Let $(X,\mu)$ be a complete doubling metric measure space and assume
  that for $\mu$-a.e.~$p$ there is some $(Y,\nu,q)\in\tang(X,\mu,p)$
  which is not a differentiability space. Then $(X,\mu)$ is not a
  differentiability space.
\end{thm}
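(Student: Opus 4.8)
The plan is to establish non-differentiability of $(X,\mu)$ through the gluing machinery of Section~\ref{sec:loc_app}. By Theorem~\ref{thm:glue_tiles}, combined with Theorem~\ref{thm:indep_functions}, it suffices to produce tiles: after fixing universal constants $L$ and $\alpha\vdec$, I must show that for \emph{each} small $\varepsilon>0$ there are $\varepsilon\gdec,\varepsilon\lsdec\le\varepsilon$ such that for $\mu$-a.e.\ $x$ there is a sequence of scales $r_n(x)\searrow0$ and, for every $n$, an $[L,\alpha\vdec,\varepsilon\gdec,\varepsilon\lsdec,r_n]$-tile $(S_x,f_x)$ at $x$. Since a positive-measure subset of a differentiability space is again one, it is harmless to first restrict to a positive-measure Borel set on which all the tangential data below are uniform. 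The problem thereby reduces to the local construction of a cutoff $L$-Lipschitz $f_x$, supported essentially in $\ball x,r_n.$, whose Weaver differential is small off a controlled exceptional set (\textbf{(T4)}) but which oscillates with definite slope $\ge\alpha\vdec$ at scale $\varepsilon\lsdec$ on most of $S_x$ (\textbf{(T5)}). This is the ``lifting'' step.

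To find the germ of $f_x$ I would work on the tangent. At $\mu$-a.e.\ $p$ there is $(Y,\nu,q)\in\tang(X,\mu,p)$ that is not a differentiability space; as $X$ is doubling, $\nu$ is doubling, so by the quantitative characterization (\textbf{(Schioppa)} in Theorem~\ref{thm:diff_summary}) there is a $1$-Lipschitz $g$ on $Y$ and a positive-measure set on which $\biglip g>\wfynrm dg.$ (the reverse inequality always holds). Fixing $\delta_1<\delta_2$ with $\wfynrm dg.<\delta_1<\delta_2<\biglip g$ on a compact $K$ of positive measure, the Approximation Scheme (Theorem~\ref{thm:approx}) applies: since $\nu\on K$ admits no Alberti representation with $g$-speed $\ge\delta_1$, there are $g_k\wkcvj g$ that are $\delta_1$-Lipschitz on every ball inside a neighbourhood $U_k\supset K$. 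Thus each $g_k$ has $\biglip g_k\le\delta_1$, and hence small Weaver differential, on $K$, while $g_k\to g$ pointwise keeps its oscillation close to that of $g$, which is $\ge\delta_2$ at suitable scales. This ``small differential, definite oscillation'' dichotomy is exactly a tile in embryo; $\delta_1$ will drive $\varepsilon\gdec\to0$, whereas $\alpha\vdec$ may be chosen as a fixed fraction of $\delta_2$.

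It remains to transport such a function from $Y$ to $X$ at an honest finite scale, for which I would combine three tools. Preiss' phenomenon (Theorem~\ref{thm:blow_ups_are_blow_ups}) that shifted rescalings of blow-ups are blow-ups lets me slide the bad set to the basepoint and, crucially, move the construction from the single point $p$ to $\mu$-a.e.\ $x$. Realizing $Y$ as an mfGH-limit of the rescalings $(\lambda_n X,c_\mu(p,\lambda_n)\mu,p)$ and extending $g_k$ through the container via Lemma~\ref{lem:mfgh_macshane}, I recover genuine Lipschitz functions on $X$ that, at scale $r_n=\lambda_n^{-1}$, rescale to approximate $g_k$; cutting these off in $\ball x,r_n.$ produces the candidate $f_x$, and \textbf{(T1)}--\textbf{(T3)} and \textbf{(T5)} then follow from the convergence and the definition of $\biglip$. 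Finally, to control $\wfnrm df_x.$ on $X$ (\textbf{(T4)}) I would invoke the blow-up theory for derivations and forms, Theorem~\ref{thm:weaver_blowup} and Corollary~\ref{cor:weaver_basis_blow}, together with the scaling identity of Lemma~\ref{lem:locnrm_transf}: they tie the differential of $f_x$ on $X$ at scale $r_n$ to $\wfynrm dg_k.\le\delta_1$ on $K$, making it small on most of $S_x$. A differentiation-of-measure / approximate-continuity argument at $\mu$-a.e.\ $x$ and a diagonal extraction over scales then upgrade the construction from a single $p$ to a full-measure set of $x$ carrying whole sequences $r_n(x)\searrow0$.

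The crux---and the content of the first six steps of the argument---is this lifting, and inside it the simultaneous enforcement of \textbf{(T4)} and \textbf{(T5)}. The Weaver differential is an infinitesimal object stable under the weak* and mfGH limits used to pass between $X$ and $Y$, whereas $\biglip$, hence the variation in \textbf{(T5)}, is decidedly \emph{not} continuous under these limits; its upward jump in the limit is precisely what allows small differential together with large oscillation. Consequently the flatness of $g_k$ (valid below the width of $U_k$) and the oscillation of $g$ (witnessed at the $\biglip$-scales) occur at \emph{different} scales, and the delicate task is to match the flatness scale, the oscillation scale, and the tile scale $r_n$ so that the transported $f_x$ is simultaneously small-differential and genuinely oscillating at scale $\varepsilon\lsdec$, for $\mu$-a.e.\ $x$ and at scales bounded away from---yet tending to---$0$. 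Making the measure-theoretic selection cooperate with this scale-matching is where the bulk of the work lies.
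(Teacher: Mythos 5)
Your skeleton coincides with the paper's (contrapositive via Theorems~\ref{thm:indep_functions} and~\ref{thm:glue_tiles}, a bad function on a tangent, the Approximation Scheme, Preiss' phenomenon, cubical cutoffs), but at the two decisive points of the lifting there are genuine gaps. The first is quantitative, and you state the requirement correctly at the outset but your construction cannot meet it. Theorem~\ref{thm:indep_functions} needs, for a \emph{fixed} pair $(\alpha\vdec,L)$, a function $f_\varepsilon$ for \emph{every} $\varepsilon>0$, and Theorem~\ref{thm:glue_tiles} converts tiles with parameters $(\varepsilon\gdec,\varepsilon\lsdec)$ into the single value $\varepsilon=\max(\varepsilon\gdec,\varepsilon\lsdec)$; so on the tangent side you need, for every $\varepsilon>0$, Lipschitz functions $h$ on $Y$ with $\wfynrm dh.\le\varepsilon$ and $\biglip h\ge1$ on a positive-measure set. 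Your construction starts from one function $g$ with one gap $\wfynrm dg.<\delta_1<\delta_2<\biglip g$ on $K$: for that $g$ the number $\delta_1$ is bounded below by the essential infimum of $\wfynrm dg.$ on $K$ and cannot be ``driven to $0$'', and rescaling $g$ multiplies both sides of the gap by the same factor, so the ratio is a fixed constant of $g$. Negating the quantitative characterization yields \emph{some} gap, not an arbitrarily large one; upgrading it is a genuine self-improvement, which is exactly what the paper imports in Step~2 via the argument of Lemma \sync lem:der_alb_gap_flat_func.\ in \cite{deralb} (this is also where the finite index of $\wder\nu.$, coming from doubling, is used): for every $\varepsilon\todec>0$ there are a positive-measure set $S\ndfdec$ and a Lipschitz $f$ with $\biglip f\in[1,2]$ and $\wfynrm df.\le\varepsilon\todec$ on $S\ndfdec$. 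Without this step the hypothesis of Theorem~\ref{thm:indep_functions} is out of reach.

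The second gap is your mechanism for \textbf{(T4)}. You run the Approximation Scheme on $Y$, so the flatness of $g_k$ holds on balls of open subsets $U_k$ of $Y$; you then want to move across the container and control the differential on $X$ using Theorem~\ref{thm:weaver_blowup}, Corollary~\ref{cor:weaver_basis_blow} and Lemma~\ref{lem:locnrm_transf}. Neither half works. Lemma~\ref{lem:mfgh_macshane} is inapplicable (it extends an already-given compatible family $\{\varphi_{n,k}\}$ on the $X_n$'s together with its limit, which is precisely what you are trying to construct), and a MacShane extension of $g_k$ off $Y$ loses local flatness: extending $t\mapsto\delta t$ from $\real\times\{0\}$ to $\real^2$ with constant $C$ produces transverse slope $\sqrt{C^2-\delta^2}$, and since $X_n$ is only Hausdorff-close to $Y$, never inside it, the restriction of such an extension to $X_n$ need not have small differential at all. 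Moreover, Theorem~\ref{thm:weaver_blowup} and Corollary~\ref{cor:weaver_basis_blow} go in the opposite direction: they build derivations on the blow-up $Y$ out of Alberti representations on $X$ and yield only weak* convergence of pairings; they give no upper bound for $\wfxnnrm d\tilde f.$ at a finite scale in terms of tangential data --- indeed, as you observe yourself, the differential can drop in the limit, which is exactly why no such bound can exist. The paper's mechanism is different and is the crux of Steps~4--5: the bad function itself is MacShane-extended to the container $\linfty$, and Theorem~\ref{thm:approx} is applied \emph{in the container}, so that the flat approximant $\tilde f$ is $(\varepsilon\ctdec+\varepsilon\ttdec)$-Lipschitz on every $\linfty$-ball inside an open $U\subset\linfty$; since by weak* convergence $\mu_n$-most of $B_{X_n}(p_n,1/2)$ lies in $U$ for large $n$, this flatness restricts verbatim to $X_n$ and gives \textbf{(T4)}, while $\|\tilde f-f\|_\infty\le\varepsilon\ctdec\ll R\vdec$ keeps the variation of \textbf{(T5)} alive at the scales in $(R\vdec,\tau\ctdec)$ fixed by the uniformization in Step~4. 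Finally, you omit the sawtooth truncation of Step~3 (pigeonhole on $\psi_\alpha\circ f$), which forces $\|f\|_\infty\le\tau\ctdec$; without it, multiplication by the cube cutoff $\psi_Q$ destroys the uniform Lipschitz bound and \textbf{(T3)}, so the glued function is not $L$-Lipschitz for any fixed $L$.
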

\begin{proof}
  In the following we will assume that $\mu(X)=1$. By
  Theorems~\ref{thm:indep_functions}, \ref{thm:glue_tiles} it suffices
  to show that at $\mu$-a.e.~point
  $p$ there is a sequence of
  ``bad tiles'' satisfying the assumption of
  Theorem~\ref{thm:glue_tiles}.
  \par\noindent\texttt{Step 1: Choice of Christ's cubes.}
  \par In $(X,\mu)$ we choose a set of Christ's dyadic cubes
  \cite{christ_cubes,hytonen_cubes} $\cubes$
  using the scales $\{2^{-n}\}_n$. The properties of $\cubes$ that we
  will use are:
  \begin{description}
  \item[(Cube1)] 
    {Cubes are open} and there are constants $C\anndec\ge 1$ and $\beta\anndec>0$
    such that for $\tau\in(0,1)$ and $Q\in\cubes$ the ``annulus''
    \begin{equation}
      \label{eq:blow_up_diff_p1}
      Q(\tau)=\left\{
        p\in Q: d(p,Q^c)\le\tau\diam Q
      \right\}
    \end{equation}
    satisfies:
    \begin{equation}
      \label{eq:blow_up_diff_p2}
      \mu\left(
        Q(\tau)
      \right) \le C\anndec\tau^{\beta\anndec}\mu(Q).
    \end{equation}
  \item[(Cube2)] There is a $k\anndec\ge1$ such that each ball
    $B(p,
    {r/2})$, where $r\in[2^{-n},2^{-n+1}]$, contains a cube $Q$ of generation
    $k\anndec+n$ and the measures $\mu(Q)$ and $\mu(B(p,r))$ are
    uniformly comparable.
  \end{description}
  In the following, we will only consider blow-ups at points $p\in X$ where
  the conclusion of Theorem~\ref{thm:blow_ups_are_blow_ups} holds.
  \par\noindent\texttt{Step 2: Uniformizing a bad function.}
  \par We now consider some $(Y,\nu,q)\in\tang(X,\mu,p)$ which is not a
  differentiability space. Note that $\nu$ is doubling with doubling
  constant bounded by $C^4_\mu$, $C_\mu$ being the doubling constant of $\mu$. Thus, the index of the
  module $\wder\nu.$ is bounded by $\lceil \log_2C^4_\mu\rceil$. We fix
  the parameter $\varepsilon\todec\in (0,1)$ and use the argument of
  Lemma \sync lem:der_alb_gap_flat_func.\ in \cite{deralb} to show that
  there are a Borel subset
  $S\ndfdec\subset Y$ with $\nu(S\ndfdec)>0$ and a Lipschitz function $f:Y\to\real$ such
  that:
  \begin{align}
    \label{eq:blow_up_diff_p3}
    \biglip f(x) &\in [1,2]\quad(\forall x\in S\ndfdec)\\
    \label{eq:blow_up_diff_p3_0}
    \wfynrm df(x). &\le\varepsilon\todec\quad(\forall x\in S\ndfdec).
  \end{align}
  We choose a point $\tilde{q}$ that is a Lebesgue density point $\tilde{q}$ of $S\ndfdec$, 
  and an approximate continuity point of $\biglip f$ and $\wfynrm
  df.$. By shifting the basepoint $\tilde{q}$ and rescaling
  $(Y,\nu,q)$ and $f$, we can assume that:
  \begin{description}
  \item[(BadF1)] $S\ndfdec\subset B_Y(q,2)$ and
    $\nu\left(B_Y(q,2)\setminus S\ndfdec\right)\le\varepsilon\todec$;
  \item[(BadF2)] The Lipschitz constant of the restriction of $f$ to
    $S\ndfdec$ is at most $3$;
  \item[(BadF3)] For $\nu$-a.e.~$x\in S\ndfdec$ there is a sequence $\vseq
    x.=\{x_n\}\subset S\ndfdec\setminus \{x\}$ converging to $x$ such
    that 
    {$f$ witnesses
    at $x$ a definite amount of variation at scale $d(x,x_n)$}:
    \begin{equation}
      \label{eq:blow_up_diff_p4}
      \left|
        f(x)-f(x_n)
      \right| \ge (1-\varepsilon\todec)d(x,x_n)>0.
    \end{equation}
  \end{description}
  We now use MacShane's Lemma to extend $f|S\ndfdec$ to a
  $3$-Lipschitz function $\tilde{f}:Y\to\real$. Note that
  \textbf{(BadF3)} and~(\ref{eq:blow_up_diff_p3_0}) remain valid
  replacing $f$ with $\tilde{f}$. In the following we will write $f$ for $\tilde{f}$.
  \par\noindent\texttt{Step 3: Truncating the function $f$.}
  \par Fix parameters $\tau\anndec\in(0,1)$ and
  $\tau\ctdec\in(0,\tau\anndec)$. We fix $N\in\natural$ and let
  \begin{equation}
    \label{eq:blow_up_diff_p5}
    \alpha\in[0,1)\cap\frac{1}{N^2}\zahlen
  \end{equation}
  we let:
  \begin{equation}
    \label{eq:blow_up_diff_p6}
    \psi_\alpha(\cdot)=d\left(\cdot,
      \frac{1}{N}\zahlen+\alpha
    \right),
  \end{equation}
  which is a $1$-Lipschitz function. Using the pigeonhole principle
   (see \cite[Lemma 4.1]{bate-diff} for details) we can find
  a value of $\alpha$ and a Borel set $S\vdec\subset S\ndfdec$ such that:
  \begin{equation}
    \label{eq:blow_up_diff_p7}
    \nu\left(
      S\ndfdec\setminus S\vdec
    \right) \le \frac{4}{N}\nu(S\ndfdec)\le
    {\frac{4}{N}\nu(B_Y(q,2))},
  \end{equation}
  such that for each $x\in S\ndfdec$ there is a sequence $\vseq
  x.\subset B_Y(q,2)\setminus\{x\}$ converging to $x$
  such that, 
  {similarly as in \textbf{(BadF3)}}, one has:
  \begin{equation}
    \label{eq:blow_up_diff_p8}
    \left|
      \psi_\alpha\circ f(x) - \psi_\alpha\circ f(x_n)
    \right| \ge (1-\varepsilon\todec)d(x,x_n)>0.
  \end{equation}
  Now note that the normalization
  \begin{equation}
    \label{eq:blow_up_diff_p8_1}
{ \int_{B_Y(q,1)}\left(1-d(p,x)\right)d\nu(x)=1}
  \end{equation}
  implies
  \begin{equation}
    \label{eq:blow_up_diff_p8_2}
{\nu\left(B_Y\left(q,\frac{1}{2}\right)\right)\le 2,}
  \end{equation}
 and thus~(\ref{eq:blow_up_diff_p7}) leads to:
 \begin{equation}
   \label{eq:blow_up_diff_p8_3}
{\nu\left(
      S\ndfdec\setminus S\vdec
    \right) \le \frac{8C_\mu^8}{N}.}
 \end{equation}
  Finally, note that:
  \begin{equation}
    \begin{aligned}
      \label{eq:blow_up_diff_p9}
      \|\psi_\alpha\circ f\|_\infty&\le\frac{1}{N}\\
      \wfynrm d(\psi_\alpha\circ f).&\le\wfynrm df..
    \end{aligned}
  \end{equation}
  We now fix a parameter $\varepsilon\tddec>\varepsilon\todec$
  and choose $N$ so that:
  \begin{description}
  \item[(BadC1)] $\nu(B(q,2)\setminus S\vdec)\le\varepsilon\tddec$ and
    $\|\psi_\alpha\circ f\|_\infty\le\tau\ctdec$;
  \item[(BadC2)] For each $x\in S\vdec$ there is a sequence $\vseq
    x.=\{x_n\}$ converging to $x$
    such that
    \begin{equation}
      \label{eq:blow_up_diff_p10}
      \left|
        \psi_\alpha\circ f(x) - \psi_\alpha\circ f(x_n)
      \right| \ge (1-\varepsilon\tddec)d(x,x_n)>0;
    \end{equation}
  \item[(BadC3)] For each $x\in S\vdec$ we have $\wfynrm d(\psi_\alpha\circ
    f)(x).\le\varepsilon\tddec$. 
  \end{description}
  In the following we will write $f$ for $\psi_\alpha\circ f$.
  \noindent\par\texttt{Step 4: Arranging convergence in $\linfty$.}
  \par We now choose a sequence of rescalings $\lambda_n\nearrow\infty$ 
  realizing $(Y,\nu,q)$, i.e.:
  \begin{equation}
    \label{eq:blow_up_diff_p11}
    (\lambda_nX=X_n,\mu_n,p)\xrightarrow{\text{mGH}}(Y,\nu,q);
  \end{equation}
  we can choose the convergence to take place in the container
  $(\linfty,0)$, and we will require that all basepoints
  map to $0$, but we will still distinguish them in the notation,
  i.e.~we
  will denote the
  basepoint of $X_n$ by $p_n$. We now fix the parameter
  $\varepsilon\ttdec>\varepsilon\tddec$. Then we can find
  $R\vdec\in(0,\tau\ctdec)$ and a compact set
  $K\hudec\subset B_Y(q,2)$ such that:
  \begin{description}
  \item[(Hau1)] $\nu(B_Y(q,2)\setminus K\hudec)\le\varepsilon\ttdec$
    and for each $x\in K\hudec$ one has \begin{equation}
\label{eq:blow_up_diff_p12_-1_break}{\wfynrm
    df(x).\le\varepsilon\ttdec};
\end{equation}
\item[(Hau2)] For each $x\in K\hudec$ there is an $x\vdec=x\vdec(x)$ satisfying:
    \begin{equation}
      \label{eq:blow_up_diff_p12}
      \begin{aligned}
        \left|
          f(x) - f(x\vdec)
        \right| &\ge (1-\varepsilon\ttdec)d(x,x\vdec)\\
        d(x,x\vdec)&\in(R\vdec,\tau\ctdec).
      \end{aligned}
    \end{equation}
  \end{description}
  Note that by MacShane's Lemma we can assume $f$ to be extended to a $3$-Lipschitz
  map $f:l^\infty\to\real$.
  \par\noindent\texttt{Step 5: Using the approximation scheme.}
  \par We fix the parameter $\varepsilon\ctdec\in(0,\varepsilon\ttdec
  R\vdec/2)$. Because of \textbf{(Hau1)} we can apply the
  Approximation Scheme Theorem~\ref{thm:approx} and find a function
  $\tilde{f}:\linfty\to\real$ which is $3$-Lipschitz, a compact set
  $K\appdec\subset K\hudec$, and an open set $U\supset K\appdec$ such
  that the following holds:
  \begin{description}
  \item[(App1)] $\|\tilde{f}-f\|_\infty\le\varepsilon\ctdec$ and
    $\nu(K\hudec\setminus K\appdec)\le\varepsilon\ctdec$;
  \item[(App2)] $U$ is an open set of $\linfty$ contained in the closed 
    $\varepsilon\ctdec$-neighbourhood $\bar
    B_{l^\infty}(K\appdec,\varepsilon\ctdec)$ of $K\appdec$;
  \item[(App3)] For each ball $B\subset U$ the restriction
    $\tilde{f}|B$ has Lipschitz constant $\le\varepsilon\ctdec+\varepsilon\ttdec$.
  \end{description}
  As $U$ is open and $\mu_n\xrightarrow{\text{w*}}\nu$ we
  have by the properties of the weak* topology and the choice of
  basepoints that for $n$
  sufficiently large:
  \begin{equation}
    \label{eq:blow_up_diff_p13}
{\int_{U\cap B_{\linfty}(0,1)}\left(1-d_{X_n}(p_n,x)\right)\,
    d\mu_n(x)\ge
    \int_{U\cap B_{\linfty}(0,1)}\left(1-d_{Y}(q,x)\right)\,
    d\nu(x)-\varepsilon\ctdec.}
  \end{equation}
  We thus obtain:
{\begin{equation}
    \label{eq:blow_up_diff_p14}
    \begin{split}
      \int_{U\cap B_{\linfty}(0,1)}\left(1-d_{X_n}(p_n,x)\right)\,
    d\mu_n(x)&\ge\int_{B_{\linfty}(0,1)}\left(1-d_{Y}(q,x)\right)\,
    d\nu(x)\\ &\mskip 8mu -\nu(B_Y(q,1)\setminus K\appdec)-\varepsilon\ctdec\\
       &\ge 1-(\varepsilon\ttdec+2\varepsilon\ctdec);
    \end{split}
  \end{equation}
  moreover, as on $B_{\linfty}(0,1/2)\cap X_n=B_{X_n}(p_n,1/2)$ we have
  that $1-d_{X_n}(p_n,\cdot)$ is at least $1/2$, we conclude that:
  \begin{equation}
    \label{eq:blow_up_diff_p14_0}
    \mu_n\left(
      B_{X_n}\left(
        p_n,\frac{1}{2}
      \right) \setminus U
    \right) \le 2(\varepsilon\ttdec+2\varepsilon\ctdec).
  \end{equation}}
      Now for $x\in U\cap B_{X_n}(p_n,1)$ we have:
  \begin{equation}
    \wfxnnrm d\tilde{f}(x).\le\varepsilon\ctdec+\varepsilon\ttdec.
  \end{equation}
  Moreover, we have:
  \begin{equation}
    \label{eq:blow_up_diff_p15}
    \|\tilde{f}\|_\infty\le\varepsilon\ctdec+\tau\ctdec,
  \end{equation}
  and for $x\in K\appdec$ we can find $x\vdec$
  satisfying~(\ref{eq:blow_up_diff_p12}). 
  \par\noindent\texttt{Step 6: Lifting the variation.}
  \par We now choose a parameter $\varepsilon\dsdec>0$ and a finite
  $\varepsilon\dsdec$-net $\dnet$ in $K\appdec\cap\bar B_Y(q,1)$. For
  each $x\in\dnet$ we can find $x\vdec(x)\in B_Y(q,2)$
  satisfying~(\ref{eq:blow_up_diff_p12}). We can thus construct the
  finite set:
  \begin{equation}
    \label{eq:blow_up_diff_p16}
    \dnet\vdec=\left\{
      x\vdec(x)
    \right\}_{x\in\dnet}
  \end{equation}
  and find a map $V:\dnet\to\dnet\vdec$ which associates $x\vdec(x)$
  to $x$. Using~(\ref{eq:gh_conv_3}), for $n$ sufficiently
  large, we can find a finite set
  \begin{equation}
    \label{eq:blow_up_diff_p17}
    \dnet_n\subset U\cap B_{X_n}(p_n,1+\varepsilon\dsdec)
  \end{equation} of the same cardinality as $\dnet$ and a bijection
  $J_n:\dnet\to\dnet_n$ such that:
  \begin{equation}
    \label{eq:blow_up_diff_p18}
    d(J_n(x),x)<\varepsilon\dsdec.
  \end{equation}
  Similarly, for $n$ sufficiently
  large, we can also find a finite set
  \begin{equation}
    \label{eq:blow_up_diff_p19}
    \dnet\nvdec\subset U\cap B_{X_n}(p_n,2+\varepsilon\dsdec)
  \end{equation} of the same cardinality as $\dnet\vdec$ and a bijection
  $J\nvdec:\dnet\vdec\to\dnet\nvdec$ such that:
  \begin{equation}
    \label{eq:blow_up_diff_p20}
    d(J\nvdec(x),x)<\varepsilon\dsdec.
  \end{equation}
  We finally let $V_n=J\nvdec\circ V\circ J_n^{-1}$. For $y\in U\cap
  B_{X_n}(p_n,1)$ we can find $x\in\dnet$ such that:
  \begin{equation}
    \label{eq:blow_up_diff_p21}
    d(y,x)=O(\varepsilon\ctdec,\varepsilon\dsdec);
  \end{equation}
  using that $\tilde{f}$ is $3$-Lipschitz we obtain:
  \begin{equation}
    \label{eq:blow_up_diff_p22}
    \left|
      \tilde{f}(y)
      - \tilde{f}(V_n(x))
    \right| = \left|
      \tilde{f}(J_n^{-1}(x)) - \tilde{f}(V \circ J_n^{-1}(x))
    \right| + O(\varepsilon\ctdec,\varepsilon\dsdec);
  \end{equation}
  using the properties of $V$ we also conclude that:
  \begin{equation}
    \label{eq:blow_up_diff_p23}
    d(y,V_n(x))=d(J_n^{-1}(x),V\circ J_n^{-1}(x))+O(\varepsilon\ctdec,\varepsilon\dsdec).
  \end{equation}
  Note that the constants hidden in the $O(\cdot)$ notation
  in~(\ref{eq:blow_up_diff_p21})--(\ref{eq:blow_up_diff_p23}) do not
  depend on $n$. As the parameters $\varepsilon\ctdec$ and
  $\varepsilon\dsdec$ are chosen after $R\vdec$ and
  $\varepsilon\ttdec$, one can choose them sufficiently small so that:
  \begin{equation}
    \label{eq:blow_up_diff_p24}
    \begin{aligned}
      \left|\tilde{f}(y)-\tilde{f}(V_n(x))\right|&\ge(1-\varepsilon\ttdec)d(y,V_n(x))\\
      d(y,V_n(x))&\in(R\vdec/2,2\tau\ctdec).
    \end{aligned}
  \end{equation}
  \par\noindent\texttt{Step 7: Constructing the tiles.}
  \par Let $Q\in\cubes$ be a dyadic cube of generation
  $g_n=\lfloor\log_2\lambda_n\rfloor+k\anndec$ contained in
  {$B_{X_n}(p_n,1/2)$}. For the moment we will compute distances using the
  rescaled metric. The cube $Q$ will be used to
  construct the tile. 
  {Specifically, recall that $Q$ is open and
  consider a parameter $\tau\anndec\in(0,1/8)$ to be chosen later. The
  set $\tilde Q$ that we will use for the tile will be the closure in
  $X_n$ of $Q\setminus Q(\tau\anndec)$.}
  Now \textbf{(Cube1)} and \textbf{(Cube2)} imply that there is a
  uniform constant $C$ such that:
  \begin{equation}
    \label{eq:blow_up_diff_p25}
    \begin{aligned}
      {\diam \tilde Q \approx_C} \diam Q&\approx_C 2^{-k\anndec}\\
      {\mu_n(\tilde Q) \approx_C} \mu_n(Q)&\gtrsim_C\mu_n(B_{X_n}(p_n,1));
    \end{aligned}
  \end{equation}
  this will give \textbf{(T1)} and \textbf{(T2)}.
  Consider $Q(\tau\anndec)$ and a
  $\frac{C2^{k\anndec}}{\tau\anndec}$-Lipschitz function $\psi_Q$
  which takes values in $[0,1]$ and such
  that:
  \begin{equation}
    \label{eq:blow_up_diff_p26}
    \begin{aligned}
      \psi_Q &=
      \begin{cases}
        1&\text{on $Q\setminus Q(
          {2}\tau\anndec)$}\\
        0&\text{on $\tilde Q^c$}      
      \end{cases}\\
      \left|
        \psi_Q(x)
      \right|&\le\frac{C2^{k\anndec}}{\tau\anndec} d(x,\tilde Q^c).
    \end{aligned}
  \end{equation}
  Let $h=\psi_Q\tilde{f}$; using equation~(\ref{eq:blow_up_diff_p15})
  we conclude that $h$ has Lipschitz constant at most
  \begin{equation}
    \label{eq:blow_up_diff_p27}
    3+\frac{C2^{k\anndec}}{\tau\anndec}(\tau\ctdec+\varepsilon\ctdec).
  \end{equation}
  As the parameters $\tau\ctdec$ and $\varepsilon\ctdec$ are chosen
  after $k\anndec$ and $\tau\anndec$ have been determined, we can
  choose them small enough to ensure that $h$ is $4$-Lipschitz. We
  then also have:
  \begin{equation}
    \label{eq:blow_up_diff_p_ins_28-1}
    \left|
      h(x)
    \right|\le
    \frac{C2^{k\anndec}}
    {\tau\anndec} (\tau\ctdec+\varepsilon\ctdec)d(x,\tilde Q^c)\\
    \le 4d(x,\tilde Q^c).
  \end{equation}
  The function $h$ will be the function that we use to construct the
  tile. Now \textbf{(T3)} follows from~(\ref{eq:blow_up_diff_p27}), (\ref{eq:blow_up_diff_p_ins_28-1}).
  We now let:
  \begin{equation}
    \label{eq:blow_up_diff_p28}
    {Q\ndfdec=Q\setminus Q(2\tau\anndec+2\tau\ctdec)\cap U \subset\tilde
    Q \subset B_{X_n}(p_n,1/2)};
  \end{equation}
  we then have, using
  \textbf{(Cube1)} and minding~(\ref{eq:blow_up_diff_p14_0}):
  {\begin{equation}
    \label{eq:blow_up_diff_p29}
    \begin{split}
      \mu_n(\tilde Q\setminus Q\ndfdec) &\le
      \mu_n(Q(2\tau\anndec+2\tau\ctdec)) +
      \mu_n(B_{X_n}(p_n,1/2)\setminus U) \\
      &\le
      C\anndec(2\tau\anndec+2\tau\ctdec)^{\beta\anndec}\mu_n(Q)
      + \mu_n(B_{X_n}(p_n,1/2)\setminus U)\\
      &\le
      C\anndec\,
      C(2\tau\anndec+2\tau\ctdec)^{\beta\anndec}\mu_n(\tilde Q)
      + 2(\varepsilon\ttdec+2\varepsilon\ctdec).
    \end{split}
  \end{equation}}
  As the parameters on the right hand side
  of~(\ref{eq:blow_up_diff_p29}) are chosen after \texttt{Step 1},
  given a fixed $\varepsilon\lsdec>0$ we chan choose those parameters
  so that:
  \begin{equation}
    \label{eq:blow_up_diff_p30}
    \mu_n(\tilde Q\setminus Q\ndfdec)\le\varepsilon\lsdec\mu_n(\tilde Q). 
  \end{equation}
  Now, if $x\in Q\ndfdec$, the function $h$ is
  $(\varepsilon\ctdec+\varepsilon\ttdec)$-Lipschitz in a
  neighbourhood of $x$ and hence:
  \begin{equation}
    \label{eq:blow_up_diff_p31}
     \wfxnnrm dh(x).\le(\varepsilon\ctdec+\varepsilon\ttdec).
   \end{equation}
   Given $\varepsilon\gdec$ we can choose $\varepsilon\ctdec$ and
   $\varepsilon\ttdec$ so that their sum is $\le\varepsilon\gdec$;
   this gives \textbf{(T4)}.
   Also, given $x\in Q\ndfdec$ we can find
   by~(\ref{eq:blow_up_diff_p24}) a point $
   {y\in Q\setminus
   Q(\tau\anndec)\subset\tilde Q}$ such that:
   \begin{equation}
     \label{eq:blow_up_diff_p32}
     \begin{aligned}
       \left|
         h(x) - h(y)
       \right| &\ge(1-2\varepsilon\ttdec)d(x,y)>0\\
       d(x,y)&\le2\tau\ctdec.
     \end{aligned}
   \end{equation}
   Choosing $\tau\ctdec$ sufficiently small we can ensure that
   \begin{equation}
     \label{eq:blow_up_diff_ins_-1_p33}
     d(x,y)\le\varepsilon\lsdec.
   \end{equation}
   Finally, choosing $\varepsilon\ttdec$
   sufficiently small we can also ensure that:
   \begin{equation}
     \label{eq:blow_up_diff_p33}
     1-2\varepsilon\ttdec\ge\frac{2}{3};
   \end{equation}
   thus (\ref{eq:blow_up_diff_p32})--(\ref{eq:blow_up_diff_p33}) give \textbf{(T5)}.
   Rescaling $h$ back to $X$ (i.e.~$h\mapsto \lambda_n^{-1}h$), we
   conclude that $(\tilde Q,h)$ is an:
   \begin{equation}
     \label{eq:blow_up_diff_p34}
     \left[
       \max(C,5), \frac{2}{3}, \varepsilon\gdec, \varepsilon\lsdec, \lambda_n^{-1}
     \right]\text{-tile}
   \end{equation}
   at $p$.
\end{proof}
\subsection{Independence of the $p$-weak gradient on $p$}
\label{subsec:weak_grad_indep}
\def\slda{\text{\normalfont Slide}} 
\begin{defn}
  \label{defn:slide}
  Let $I\subset\real$ be a nondegenerate closed interval and
  $A_I:\real\to\real$ the unique orientation-preserving affine map
  which maps $[0,1]$ onto $I$. For $\varepsilon>0$ let:
  \begin{equation}
    \label{eq:slide_1}
    \slda(I,\varepsilon) = \left\{\gamma\in\curves(X): \forall
      t\in[0,\varepsilon]\quad t+I\subset\dom\gamma
    \right\};
  \end{equation}
  let:
  \begin{equation}
    \label{eq:slide_2}
    \slda_{I,\varepsilon}:\slda(I,\varepsilon)\times[0,\varepsilon]\to\curves(X,[0,1])
  \end{equation}
  be the map such that:
  \begin{equation}
    \label{eq:slide_3}
    \slda_{I,\varepsilon}(\gamma,t)(s)=\gamma(A_{I}(s)+t).
  \end{equation}
\end{defn}
\begin{lem}[Test plan associated with an Alberti representation]
  \label{lem:test_plan_arep}
  Let $\albrep.=[Q,1]$ be an Alberti representation of a measure $\nu$
  such that for some $C_\nu>0$ one has $\nu\le C_\nu\mu$. Assume that
  for some closed interval $I$, $\varepsilon>0$ and $C_0>0$ the
  measure $Q$ is concentrated on the set of $C_0$-Lipschitz curves in
  $\slda(I,\varepsilon)$. Assuming that $Q$ is a probability measure,
  we can associate to $\albrep.$ a probability $\pi$ on
  $\curves(X;[0,1])$ by:
  \begin{equation}
    \label{eq:test_plan_arep_s1}
    \pi=\slda_{I,\varepsilon,\#}\left(
      Q\times\frac{1}{\varepsilon}\lebmeas.\on[0,\varepsilon]
    \right).
  \end{equation}
  Then for any $q\in[1,\infty)$, $\pi$ defines a $q$-test plan.
\end{lem}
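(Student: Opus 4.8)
The plan is to verify directly the two defining conditions of a $q$-test plan in Definition~\ref{defn:test_plan}, after a preliminary check that $\pi$ is a probability measure concentrated on curves of class $\acspace^q$. Since $Q$ is a probability measure and $\frac{1}{\varepsilon}\lebmeas.\on[0,\varepsilon]$ is a probability measure on $[0,\varepsilon]$ (its total mass being $\frac{1}{\varepsilon}\cdot\varepsilon=1$), the product $Q\times\frac{1}{\varepsilon}\lebmeas.\on[0,\varepsilon]$ is a probability, and hence so is its pushforward $\pi$ under $\slda_{I,\varepsilon}$. Each curve in the support of $\pi$ has the form $s\mapsto\gamma(A_I(s)+t)$ with $\gamma$ being $C_0$-Lipschitz and $A_I$ affine of slope equal to the length $|I|$ of $I$; thus every such curve is $C_0|I|$-Lipschitz, and in particular lies in $\acspace^q(X;[0,1])$.

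For the finite-energy condition~(\ref{eq:test_plan_1}) (with $p$ replaced by $q$), I would compute the metric differential by the chain rule for affine reparametrizations: for $\gamma'=\slda_{I,\varepsilon}(\gamma,t)$ one has $\metdiff\gamma'(s)=|I|\,\metdiff\gamma(A_I(s)+t)\le|I|C_0$ for $\lebmeas.$-a.e.~$s$, using that $\metdiff\gamma\le C_0$ a.e.~for a $C_0$-Lipschitz curve. Integrating the $q$-th power over $[0,1]$ gives the uniform bound $(|I|C_0)^q$ in $(\gamma,t)$, and integrating against the probability measure $\pi$ yields the required finiteness.

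The main work is the bounded-compression property~(\ref{test_plan_2}). Fix $t\in[0,1]$ and a nonnegative Borel function $\phi$. Unwinding the definitions, $\evala_t\circ\slda_{I,\varepsilon}(\gamma,s)=\gamma(A_I(t)+s)$, so that
\[\int\phi\,d(\evala_{t\#}\pi)=\frac{1}{\varepsilon}\int_{\frags(X)}\int_0^\varepsilon\phi(\gamma(A_I(t)+s))\,ds\,dQ(\gamma).\]
After the substitution $\tau=A_I(t)+s$ the inner integral becomes $\int_{A_I(t)}^{A_I(t)+\varepsilon}\phi(\gamma(\tau))\,d\tau$. The crucial point, where the hypothesis $\gamma\in\slda(I,\varepsilon)$ is genuinely used, is that $A_I(t)\in I$ forces $A_I(t)+s\in s+I\subset\dom\gamma$ for every $s\in[0,\varepsilon]$, hence $[A_I(t),A_I(t)+\varepsilon]\subset\dom\gamma$; for $\phi\ge0$ one may then enlarge the domain of the inner integral to all of $\dom\gamma$. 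Recalling that $\albrep.=[Q,1]$ represents $\nu$ via~(\ref{eq:alb_rep_1}), the resulting quantity is bounded by $\frac{1}{\varepsilon}\int\phi\,d\nu$, and the hypothesis $\nu\le C_\nu\mu$ gives $\evala_{t\#}\pi\le\frac{C_\nu}{\varepsilon}\mu$, the desired bound with $C(\pi)=C_\nu/\varepsilon$.

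The only obstacle I anticipate is bookkeeping rather than conceptual: justifying the Fubini interchange and the change of variables for the pushforward (routine for the Radon measures involved, given a measurable selection of $\slda_{I,\varepsilon}$), and confirming the containment $[A_I(t),A_I(t)+\varepsilon]\subset\dom\gamma$ uniformly in $t\in[0,1]$. This last containment is the heart of the argument, as it is precisely the reason the ``slide'' construction produces curves whose evaluations remain controlled by $\nu$, and hence by $\mu$.
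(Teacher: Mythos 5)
Your proposal is correct and follows essentially the same route as the paper: bound the metric differential of the reparametrized curves by $C_0\lebmeas.(I)$ via the affine chain rule to get the energy bound, and establish bounded compression by unwinding $\evala_{t\#}\pi$, enlarging the inner integral to all of $\dom\gamma$ by nonnegativity, and invoking the Alberti representation of $\nu$ together with $\nu\le C_\nu\mu$. The paper's proof is simply a more compressed version of yours; the details you flag (the substitution, the containment $[A_I(t),A_I(t)+\varepsilon]\subset\dom\gamma$, and the probability normalization) are exactly the steps the paper leaves implicit.
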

\begin{proof}
  Note that for $Q$-a.e.~$\gamma$ one has that $\metdiff\gamma\le C_0$
  holds $\lebmeas.\on\dom\gamma$-a.e. Now the derivative of $A_I$ is
  $\lebmeas.(I)$ and so for $\pi$-a.e.~$\gamma$ one has that
  $\metdiff\gamma\le C_0\lebmeas.(I)$ holds $\lebmeas.\on\dom\gamma$-a.e.;
  one thus gets:
  \begin{equation}
    \label{eq:test_plan_arep_p1}
    \int d\pi(\gamma)\int_0^1\left(
      \metdiff\gamma(t)
    \right)^q\,dt\le \left(
      C_0\lebmeas.(I)
    \right)^q,
  \end{equation}
  which gives~(\ref{eq:test_plan_1}).
  Let $\varphi$ be a nonnegative continuous function of $X$; then:
  \begin{equation}
    \begin{split}
      \label{eq:test_plan_arep_p2}
      \int \varphi\,d\evala_{t\#}\pi &=
      \int \varphi(\gamma(t))\,d\pi(\gamma) =
      \int dQ(\gamma) \frac{1}{\varepsilon} \int_0^\varepsilon
      \varphi\left(
        \gamma(A_I(t)+s)
      \right)\,ds \\ &\le
      \frac{1}{\varepsilon}\int\varphi\,d\nu\le
      \frac{C_\nu}{\varepsilon}\int\varphi\,d\mu,
    \end{split}
  \end{equation}
  which establishes~(\ref{test_plan_2}).
\end{proof}
\begin{defn}[Regular Alberti representation]
  \label{defn:reg_alb}
  An Alberti representation $[Q,1]$ is \textbf{regular} if $Q$ is concentrated
  on the set of unit-speed geodesic lines of $X$. 
  {Here we think of
  unit-speed geodesic lines as maps $\gamma:\real\to X$, and so they
  have infinite length.}
\end{defn}
\begin{lem}
  \label{lem:reg_alb_est}
  Let $[Q,1]$ be a regular Alberti-representation of $\mu$ and
  $(x,r)\in X\times(0,\infty)$; then:
  \begin{equation}
    \label{eq:reg_alb_est_p1}
    Q\left(
      \left\{
        \gamma: \gamma^{-1}\left(
          B(x,r)
        \right) \ne\emptyset
      \right\}
    \right) \le \frac{\mu\left(B(x,2r)\right)}{r}.
  \end{equation}
\end{lem}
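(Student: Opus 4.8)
The plan is to unfold the definition of a regular Alberti representation and reduce the statement to an elementary length estimate for unit-speed geodesic lines. Since $[Q,1]$ has weight $w\equiv 1$ and, by Definition~\ref{defn:reg_alb}, $Q$ is concentrated on unit-speed geodesic lines $\gamma:\real\to X$ (so that $\dom\gamma=\real$ and $\metdiff\gamma\equiv 1$ for $Q$-a.e.~$\gamma$), Definition~\ref{defn:alb_rep} gives, for every Borel set $A\subset X$,
\begin{equation*}
  \mu(A)=\int_{\frags(X)}\lebmeas.\left(\gamma^{-1}(A)\right)\,dQ(\gamma),
\end{equation*}
because $\gamma_{\#}(\lebmeas.\on\dom\gamma)(A)=\lebmeas.\left(\gamma^{-1}(A)\right)$; this identity, originally interpreted in the weak* sense against $\ccompact X.$, extends to Borel sets by a standard monotone-class argument.

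The key step is the claim that whenever $\gamma^{-1}(B(x,r))\ne\emptyset$ one has $\lebmeas.\left(\gamma^{-1}(B(x,2r))\right)\ge 2r$. To see this I would pick $t_0$ with $\gamma(t_0)\in B(x,r)$. Since $\gamma$ is a unit-speed geodesic, $d(\gamma(t),\gamma(t_0))=|t-t_0|$, so for every $t\in(t_0-r,t_0+r)$ the triangle inequality yields
\begin{equation*}
  d(\gamma(t),x)\le|t-t_0|+d(\gamma(t_0),x)<r+r=2r.
\end{equation*}
Hence $(t_0-r,t_0+r)\subset\gamma^{-1}(B(x,2r))$, an interval of length $2r$, which proves the claim.

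Finally, I would apply the representation to $A=B(x,2r)$ and restrict the integral to the set $G=\left\{\gamma:\gamma^{-1}(B(x,r))\ne\emptyset\right\}$, using the claim on $G$ and nonnegativity of the integrand off $G$:
\begin{equation*}
  \mu\left(B(x,2r)\right)\ge\int_{G}\lebmeas.\left(\gamma^{-1}(B(x,2r))\right)\,dQ(\gamma)\ge 2r\,Q(G),
\end{equation*}
so that $Q(G)\le\mu(B(x,2r))/(2r)$, which is in fact a factor $2$ sharper than the stated bound. I do not expect a genuine obstacle here: the only technical point is the $Q$-measurability of $G$ and of $\gamma\mapsto\lebmeas.\left(\gamma^{-1}(B(x,2r))\right)$ with respect to the Fell topology on $\frags(X)$, which is routine and already implicit in the definition of an Alberti representation. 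The substantive content is entirely the elementary length estimate of the second paragraph.
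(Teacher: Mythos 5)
Your proof is correct and is essentially the paper's argument: both reduce the lemma to the observation that any unit-speed line meeting $B(x,r)$ must spend a definite amount of parameter time in $B(x,2r)$, and then integrate this lower bound over the set of such lines using the disintegration $\mu=\int\gamma_{\#}\left(\mathcal{L}^1\on\dom\gamma\right)dQ(\gamma)$. The only difference is quantitative: your triangle-inequality estimate gives $\mathcal{L}^1\left(\gamma^{-1}(B(x,2r))\right)\ge 2r$ where the paper contents itself with $\ge r$, so you obtain the stated bound with a spare factor of $2$.
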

\begin{proof}
  It suffices to observe that if $\gamma$ is a unit-speed geodesic
  line in $X$ then $\gamma^{-1}\left(B(x,r)\right)\ne\emptyset$
  implies:
  \begin{equation}
    \label{eq:reg_alb_est_s1}
    \lebmeas. \left(
      \gamma^{-1}\left(B(x,2r)\right)
    \right)\ge r.
  \end{equation}
\end{proof}
\begin{defn}
  \label{defn:regular_diff}
  A differentiability space $(X,\mu)$ is \textbf{regular} if:
  \begin{description}
  \item[(Reg1)] The measure $\mu$ is doubling and there is a unique
    differentiability chart $(X,\Phi=\{\varphi_i\}_{i=1}^n)$ such that
    $\Phi:X\to\real^n$ is $1$-Lipschitz;
  \item[(Reg2)] The local norm $\wfnrmname$ is constant;
  \item[(Reg3)] For each vector $v\in\wder\mu.$ (note that we can
    canonically identify a vector in the measurable tangent bundle
    with a derivation, as the chart is global) with $\wdnrm v.=1$
    there is an Alberti representation $[Q_v,1]$ of $\mu$ where $Q_v$
    is concentrated on the set of unit-speed lines $\gamma$ in $X$
    satisfying:
    \begin{equation}
      \label{eq:regular_diff_1}
      (\Phi\circ\gamma)'=v.
    \end{equation}
  \end{description}  
\end{defn}
\begin{lem}[Iterated blow-ups are regular]
  \label{lem:regular_diff}
  Let $(X,\mu)$ be a differentiability space. Then for $\mu$-a.e.~$x$
  there is an integer $N(x)$ such that, whenever
  $(Y,\nu,y)\in\tang(X,\mu,x)$, up to passing to at most $N(x)$
  iterated blow-ups, i.e.~up to replacing $(Y,\nu,y)$ with
  $(Y_l,\nu_l,y_l)$ where:
  \begin{equation}
    \label{eq:regular_diff_s1}
    \begin{aligned}
      (Y_i,\nu_i,y_i)&\in\tang(Y_{i-1},\nu_{i-1},y_{i-1})\quad 1\le
      i\le l\le N(x)\\
      (Y_0,\nu_0,y_0)&=(Y,\nu,y),
    \end{aligned}
  \end{equation}
  one can assume that $(Y,\nu,y)$ is a regular differentiability
  space. The integer $N(x)$ satisfies:
  \begin{equation}
    \label{eq:regular_diff_s2}
    N(x)\le\lfloor
    \log_2C_\mu(x)
    \rfloor,
  \end{equation}
  where $C_\mu(x)$ is the asymptotically doubling constant of $\mu$ at
  $X$. In particular ,if $X$ has finite Assoaud dimension $N$, then
  \begin{equation}
    \label{eq:regular_diff_s3}
    N(x)\le N.
  \end{equation}
\end{lem}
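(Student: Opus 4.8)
The plan is to induct on the analytic dimension of the blow-up, using Preiss' phenomenon (Theorem~\ref{thm:blow_ups_are_blow_ups}) to guarantee that every iterated blow-up is again a blow-up of $(X,\mu)$ at $x$, and using the structure theory of Corollary~\ref{cor:weaver_basis_blow} to control how the cotangent module $\wform\nu.$ degenerates under a single blow-up. The mechanism is that one blow-up either strictly lowers the analytic dimension or produces a regular differentiability space, so that after a number of steps bounded by the initial dimension one reaches regularity.

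First I would fix the $\mu$-full set of points $x$ at which: the conclusion of Theorem~\ref{thm:blow_up_diff} holds, so that every blow-up at $x$ is a differentiability space; $\mu$ is asymptotically doubling with constant $C_\mu(x)$ (Bate--Speight in Theorem~\ref{thm:diff_summary}); and the approximate-continuity hypotheses underlying Corollary~\ref{cor:weaver_basis_blow} are met. By Theorem~\ref{thm:blow_ups_are_blow_ups}(2) all of these properties are inherited by every iterated blow-up, and I am free to choose the successive blow-up centres to be generic for the space being blown up. An elementary computation with the rescaled measures shows that a blow-up $(Y,\nu,q)\in\tang(X,\mu,x)$ has doubling constant at most $C_\mu(x)$, since the ratio $\nu(B(q,2R))/\nu(B(q,R))$ is a limit of ratios $\mu(B(x,2Rr_n))/\mu(B(x,Rr_n))$ with $Rr_n\to 0$; hence by the index bound (IndBound) of Theorem~\ref{thm:der_alb_corr} its analytic dimension is at most $\lfloor\log_2 C_\mu(x)\rfloor$.

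The inductive claim is that a differentiability space $Z$ arising as a generic iterated blow-up, with analytic dimension $n$, can be made regular by at most $n$ further blow-ups. When $n=0$ the module of derivations is trivial and (Reg1)--(Reg3) hold vacuously. For $n\ge 1$, write $\{\psi_i\}_{i=1}^n$ for the global chart of $Z$, normalized to be $1$-Lipschitz, so that $\wform\nu.$ is free of rank $n$ on $\{d\psi_i\}$ by Theorem~\ref{thm:free_dec}; then Corollary~\ref{cor:weaver_basis_blow} applies to $Z$. Blow $Z$ up once at a generic centre to obtain $Z'$ of analytic dimension $n'\le n$, described through the blow-ups $\{\psi_i'\}$. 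If $n'<n$ I invoke the inductive hypothesis on $Z'$ and spend $1+n'\le n$ blow-ups in all; if $n'=n$ I claim $Z'$ is already regular, which closes the induction after a single blow-up. In either case the total is at most $n\le\lfloor\log_2 C_\mu(x)\rfloor$, giving the bound on $N(x)$.

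It remains to see that a dimension-preserving blow-up $Z'$ is regular, and this is where the real work sits. The decisive ingredient is that $\wform\nu'.$ is generated by $\{d\psi_i'\}$: by the blow-up construction for derivations and forms (Theorem~\ref{thm:weaver_blowup}) every form on $Z'$ is a limit of blow-ups of forms on $Z$, and since each such form is an $L^\infty$-combination of the chart differentials $\{d\psi_i\}$, its blow-up is a combination of $\{d\psi_i'\}$; equivalently, the first-order Taylor expansion of a Lipschitz function on $Z$ at the generic centre forces its blow-up to be a linear combination of the $\psi_i'$. Together with the freeness in (FormBlow1) and the hypothesis $n'=n$, this shows that $\{d\psi_i'\}$ is a basis of $\wform\nu'.$, so $\{\psi_i'\}$ is a global $1$-Lipschitz chart of $Z'$, which is (Reg1) (and, in passing, yields the non-increase of the analytic dimension advertised in the introduction). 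Condition (Reg2) is immediate from (FormBlow3): the local cotangent norm of $\sum_i a_i d\psi_i'$ equals the base-point-independent value it has at the centre for every rational $a$, hence is constant by density. Finally (Reg3) is assembled from the covector-adapted representations of (FormBlow2): given a unit vector $v$, one chooses a covector $a$ norming $v$; the representation $[Q_a,1]$ is concentrated on unit-speed lines along which $\sum_i a_i(\psi_i'\circ\gamma)'$ attains the (now constant) dual norm, and this extremality together with the unit-speed constraint forces $((\psi_1'\circ\gamma)',\dots,(\psi_n'\circ\gamma)')=v$ exactly. I expect the main obstacle to be precisely this generation statement $\wform\nu'.=\langle d\psi_1',\dots,d\psi_n'\rangle$, i.e. that the chart of $Z$ blows up to a chart of $Z'$ so that the analytic dimension cannot increase; the monotone induction, the constancy of the norm, and the duality producing (Reg3) are comparatively routine once this is secured. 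The Assouad bound~(\ref{eq:regular_diff_s3}) then follows since the analytic dimension is dominated by the Assouad dimension, which does not increase under blow-up, so that $N(x)\le n\le N$.
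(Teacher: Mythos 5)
Your overall skeleton --- iterate blow-ups and show that a step at which the analytic dimension does not increase produces a regular space via Corollary~\ref{cor:weaver_basis_blow} --- is the same as the paper's, but your argument hinges on a claim the paper neither makes nor knows how to prove: that a single generic blow-up never increases the analytic dimension ($n'\le n$), which you justify by asserting that every form on $Z'$ is a limit of blow-ups of forms on $Z$, hence an $L^\infty$-combination of the $d\psi_i'$. This is a genuine gap. Theorem~\ref{thm:weaver_blowup} blows up \emph{derivations} induced by Alberti representations of the measure downstairs; it contains no surjectivity statement onto $\wder{\nu'}.$ or $\wform{\nu'}.$. A Lipschitz function on the blow-up $Z'$ need not arise as a blow-up of a Lipschitz function on $Z$ (genuinely new functions can appear in the limit), and the differentials of such functions are precisely what could raise the dimension; the first-order Taylor expansion on $Z$ controls only the functions that come from $Z$. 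The introduction of the paper flags exactly this point: dispensing with iterated blow-ups --- which is essentially what your non-increase claim would deliver --- is stated to be provable only under additional hypotheses (biLipschitz embeddability into Carnot groups or ``nice'' Banach spaces) and to be otherwise open; the ``non-increase advertised in the introduction'' refers to the eventual stabilization along \emph{iterated} blow-ups proved in this lemma, not to a single blow-up step.

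The paper's proof sidesteps the issue entirely: by Theorem~\ref{thm:blow_ups_are_blow_ups} all iterated blow-ups at a generic $x$ already lie in $\tang(X,\mu,x)$, they are differentiability spaces, and they inherit doubling, so by \textbf{(IndBound)} of Theorem~\ref{thm:der_alb_corr} the index --- hence the analytic dimension --- of \emph{every} iterated blow-up is uniformly bounded by $N(x)\le\lfloor\log_2 C_\mu(x)\rfloor$. Consequently the dimension can strictly increase at most $N(x)$ times along any chain, so within at most $N(x)$ steps there is a non-increasing step, and at that step regularity follows (this is the part you and the paper share, via \textbf{(FormBlow1)}--\textbf{(FormBlow3)}); no claim about a single step is ever needed. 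Even granting your structure, two further points need repair: the doubling constant of a blow-up is controlled by a \emph{power} of $C_\mu(x)$ (the paper uses $C_\mu^4$, since limit balls are not centred at the basepoint), not by $C_\mu(x)$ itself; and your deductions that freeness plus $n'=n$ makes $\{d\psi_i'\}$ a basis, and that \textbf{(Reg3)} follows because ``extremality forces $(\Psi\circ\gamma)'=v$'', are not valid as stated --- a free rank-$n$ submodule of a free rank-$n$ $L^\infty$-normed module can be proper (consider the submodule generated by $d(x^3)$ in $\wform{\lebmeas.}.$ on $\real$), and a norming covector does not determine $v$ when the blown-up norm fails to be strictly convex.
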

\begin{proof}
  At $\mu$-a.e.~$x$ the conclusion of
  Theorem~\ref{thm:blow_ups_are_blow_ups} holds, and each
  $(Y,\nu,y)\in\tang(X,\mu,x)$ is a differentiability space. Now by
  \textbf{(IndBound)} in Theorem~\ref{thm:der_alb_corr} the index of $\wder\nu.$ is uniformly bounded by
  some $N(x)$ which satisfies~(\ref{eq:regular_diff_s2}),
  (\ref{eq:regular_diff_s3}). Thus in passing to iterated blow-ups as
  in~(\ref{eq:regular_diff_s1}), the differentiability dimension can
  increase at most $N(x)$ times. If $l$ is the smallest integer such
  that in passing from $(Y_{l-1},\nu_{l-1},y_{l-1})$ to
  $(Y_l,\nu_l,y_l)$ the differentiability dimension does not increase,
  then $(Y_l,\nu_l,y_l)$ is regular by Theorem~\ref{thm:weaver_blowup}
  (compare also \cite{cks_metric_diff}).
\end{proof}
\def\condec{_{\text{\normalfont cont}}}
\def\shdec{_{\text{\normalfont short}}}
\begin{thm}[The $p$-weak gradient does not depend on $p$]
  \label{thm:weak_grad_indep}
  Let $(X,\mu)$ be a regular differentiability space and $f$ a
  Lipschitz function on $X$; then for any $p\in(1,\infty)$:
  \begin{equation}
    \label{eq:weak_grad_indep_s1}
    \pwgradnrm f.=\biglip f\quad\text{\normalfont $\mu$-a.e.}
  \end{equation}
\end{thm}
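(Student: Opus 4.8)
The plan is to prove the two inequalities $\pwgradnrm f.\le\biglip f$ and $\pwgradnrm f.\ge\biglip f$ separately, reducing the second to a statement about the cotangent norm; by \textbf{(Schioppa)} in Theorem~\ref{thm:diff_summary} one has $\biglip f=\wfnrm df.$ $\mu$-a.e., so I may freely replace $\biglip f$ by $\wfnrm df.$. For the upper bound I would first note that $\biglip f=\limsup_{r\searrow0}\varlip f(\cdot,r)$ agrees with the pointwise slope $\limsup_{y\to x}|f(y)-f(x)|/d(x,y)$: the slope is $\le\biglip f$ since $|f(y)-f(x)|/d(x,y)\le\varlip f(x,d(x,y))$, and $\ge\biglip f$ on choosing, for each small $r$, an almost-maximizer $y_r\in\bar B(x,r)$ of $|f(\cdot)-f(x)|$ and using $d(x,y_r)\le r$. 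The slope of a Lipschitz function is an upper gradient along every rectifiable curve, hence a $p$-weak upper gradient, so minimality of the $p$-weak gradient (Definition~\ref{defn:weak_grad}) yields $\pwgradnrm f.\le\biglip f$ $\mu$-a.e.

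For the lower bound I would fix an arbitrary $p$-weak upper gradient $g$ of $f$ and show $g\ge\wfnrm df.$ $\mu$-a.e. Using the global $1$-Lipschitz chart $\Phi=\{\varphi_i\}_{i=1}^n$ of \textbf{(Reg1)} I write $df=\sum_i\frac{\partial f}{\partial\varphi^i}\,d\varphi_i$; by \textbf{(Reg2)} the fibre norms are a fixed pair of dual norms on $\real^n$, so at a.e.~$x$ one has $\wfnrm df.(x)=\sup_{\wdnrm v.=1}(vf)(x)$, where $vf=\sum_i(v\varphi_i)\frac{\partial f}{\partial\varphi^i}$ is the action of the derivation $v$. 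Since this unit sphere is a \emph{fixed} compact subset of $\real^n$, I would fix $\delta>0$, choose a finite $\delta$-net $\{v_1,\dots,v_m\}$ of it, and partition $X$ (up to a null set) into Borel pieces $X_j$ on which $v_jf\ge\wfnrm df.-L_f\delta$, the loss being controlled by a bound $L_f$ on $|\nabla_\Phi f|$. On each $X_j$ I apply \textbf{(Reg3)} in direction $v_j$ to produce a regular Alberti representation $[Q_{v_j},1]$ of $\mu$ concentrated on unit-speed geodesic lines $\gamma\colon\real\to X$ with $(\Phi\circ\gamma)'=v_j$, so that $(f\circ\gamma)'=(v_jf)\circ\gamma$.

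Finally I would convert these into test plans via Lemma~\ref{lem:test_plan_arep}. After restricting $Q_{v_j}$ to lines meeting $X_j$ inside a bounded window and normalizing to a probability (the represented measure stays $\le\mu$, so the hypothesis $\nu\le C_\nu\mu$ holds), for each closed interval $I=[a,b]$ and each $\varepsilon>0$ the measure $\pi=\slda_{I,\varepsilon,\#}\big(Q_{v_j}\times\tfrac1\varepsilon\lebmeas.\on[0,\varepsilon]\big)$ is a $q$-test plan, the lines being $1$-Lipschitz and defined on all of $\real$, hence lying in $\slda(I,\varepsilon)$. A curve in $\spt\pi$ has the form $s\mapsto\gamma(a+s|I|+t)$, so since $\gamma$ is unit-speed the inequality~(\ref{eq:weak_grad_1}) for $g$ reads, $\pi$-a.s.,
\begin{equation*}
\Big|\int_{a+t}^{b+t}(f\circ\gamma)'(\tau)\,d\tau\Big|\le\int_{a+t}^{b+t}g(\gamma(\tau))\,d\tau.
\end{equation*}
Letting $|I|\to0$ along a countable sequence and differentiating along each line (Lebesgue differentiation) turns this into $|(v_jf)(\gamma(\tau))|\le g(\gamma(\tau))$ for a.e.~$\tau$ and $Q_{v_j}$-a.e.~$\gamma$; as $[Q_{v_j},1]$ represents $\mu$ this means $|v_jf|\le g$ $\mu$-a.e. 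On $X_j$, where $v_jf\ge\wfnrm df.-L_f\delta\ge0$, it gives $g\ge\wfnrm df.-L_f\delta$ $\mu$-a.e., and summing over $j$ and letting $\delta\to0$ yields $g\ge\wfnrm df.$ $\mu$-a.e. Taking $g=\pwgradnrm f.$ and combining with the upper bound gives $\pwgradnrm f.=\wfnrm df.=\biglip f$.

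The delicate point I expect to be the main obstacle is the passage from the windowed integral inequality to the pointwise bound $g\ge v_jf$: it needs the sliding parameter to sweep a genuine interval while $|I|\to0$, which is precisely why \textbf{(Reg3)} must furnish honest geodesic lines defined on all of $\real$ (Definition~\ref{defn:reg_alb}) rather than mere fragments. Making this rigorous requires care in restricting and renormalizing the representations so that they remain admissible for Lemma~\ref{lem:test_plan_arep}, and in matching the fixed directional net $\{v_j\}$ to the $\mu$-a.e.~varying optimal direction of $df$.
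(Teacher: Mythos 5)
Your proposal is correct, and its engine is the same as the paper's: for the lower bound you feed the unit-speed lines of \textbf{(Reg3)} into the sliding construction of Lemma~\ref{lem:test_plan_arep} and then differentiate the resulting test-plan inequality along the lines, which is exactly what the paper does in its Steps 2--3. The difference is in how the a.e.~statement is organized. The paper works at a single generic point $x$: it picks the one direction $v$ at which $df(x)$ attains its norm, uses a measurable selection argument (conditions \textbf{(Test1)}--\textbf{(Test3)}) to extract short geodesic segments through $B(x,r)$ meeting the good set $A_x$ in positive measure, differentiates at Lebesgue points of $\gamma^{-1}(A_x)$, and transfers the resulting bound $g(\gamma(t_0))\ge\langle df(\gamma(t_0)),v\rangle$ back to the point $x$ by approximate continuity of $df$ and $g$. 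You instead exploit \textbf{(Reg2)} to fix a finite $\delta$-net of directions once and for all, partition $X$ into Borel pieces $X_j$ on which some $v_j$ nearly attains the norm, and obtain $|v_jf|\le g$ $\mu$-a.e.~directly by Fubini along the lines; this trades the paper's approximate-continuity and measurable-selection steps for net/partition bookkeeping, and is arguably cleaner since it never needs approximate continuity of $g$. You also spell out the upper bound ($\biglip f$ equals the slope, which is an upper gradient, so minimality gives $\pwgradnrm f.\le\biglip f$), which the paper leaves implicit. The two points you flag as delicate are exactly where care is needed, and they are resolved as in the paper's Step 3: (i) Lebesgue differentiation of the right-hand side requires $g\circ\gamma\in L^1_{\text{loc}}$ along $Q_{v_j}$-a.e.~line, which follows from $g\in L^p(\mu)$ together with the compression bound of the test plan and H\"older's inequality; (ii) to pass from ``a.e.~along $Q_{v_j}$-a.e.~line'' to ``$\mu$-a.e.~on $X_j$'' you should cover $X_j$ by countably many bounded windows, using Lemma~\ref{lem:reg_alb_est} to make each restricted $Q_{v_j}$ finite and noting that the restricted representation still represents $\mu$ on each window.
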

\begin{proof}
  \noindent\texttt{Step 1: Uniformization.}
  \par Let $g$ be a $p$-weak upper gradient of $f$ and let $A\condec$
  denote the set of differentiability points of $f$ and of
  approximate continuity points of $df$, $g$ and
  $\biglip f$. We fix $x\in A\condec$ and choose the parameter
  $\varepsilon\condec>0$. Up to rescaling $f$ and $g$ we can assume
  that:
  \begin{equation}
    \label{eq:weak_grad_indep_p1}
    \wfnrm df(x).=\biglip f(x)=1.
  \end{equation}
  We let $v\in\wder\mu.$ be a unit vector where $df(x)$ attains the
  norm and let:
  \begin{equation}
    \label{eq:weak_grad_indep_p2}
    A_x = \biggl\{
      y\in A\condec: \text{$\left|
        df(y) - df(x)
      \right|\le\varepsilon\condec$ and
      $
      \left|
        g(y)-g(x)
      \right| \le\varepsilon\condec
      $
    }
    \biggr\}.    
  \end{equation}
  Note that for $r$ sufficiently small we can assume:
  \begin{equation}
    \label{eq:weak_grad_indep_pins_+2}
    \mu\left(
      B(x,r)\setminus A_x
    \right)\le\varepsilon\condec\mu\left(
      B(x,r)
    \right).
  \end{equation}
  \par\noindent\texttt{Step 2: Construction of a $q$-test plan.}
  \par Let $q$ be the dual exponent of $p$. Let $[Q,1]$ be an Alberti
  representation of $\mu$ as in the definition of a regular
  differentiability space where we use the unit vector $v$. Let
  \begin{equation}
    \label{eq:weak_grad_indep_p3}
    \Gamma_{x,r}=\biggl\{
    \text{
      $\gamma$ is a unit speed line with $\gamma^{-1}(B(x,r))\ne\emptyset$
    }
    \biggr\};
  \end{equation}
  letting $Q_{x,r}=Q\on\Gamma_{x,r}$ we have by
  Lemma~\ref{lem:reg_alb_est}:
  \begin{equation}
    \label{eq:weak_grad_indep_p4}
    \|Q_{x,r}\|\le\frac{\mu\left(B(x,2r)\right)}{r}.
  \end{equation}
  Let $\mu_{x,r}$ be the measure corresponding to the Alberti
  representation $[Q_{x,r},1]$ and note that:
  \begin{equation}
    \label{eq:weak_grad_indep_p5}
    \begin{aligned}
      \mu_{x,r}&\le\mu\\
      \frac{d\mu_{x,r}}{d\mu}&=1\quad\text{on $B(x,r)$.}
    \end{aligned}
  \end{equation}
  We now let:
  \begin{equation}
    \label{eq:weak_grad_indep_p6}
    \Gamma\shdec=\biggl\{
    \text{
      $\gamma$ is a unit speed line with $\gamma^{-1}(A_x\cap
      B(x,r))\le\varepsilon\shdec r$
    }
    \biggr\};
  \end{equation}
  we then obtain the estimates:
  \begin{equation}
    \label{eq:weak_grad_indep_p7}
    \mu_{x,r}\left(A_x\cap B(x,r)\right)\ge
    (1-\varepsilon\condec)\mu\left(B(x,r)\right)
  \end{equation}
  \begin{equation}
    \label{eq:weak_grad_indep_p8}
    \begin{split}
      \mu_{x,r}\left(A_x\cap
        B(x,r)\right)&\le\int_{\Gamma\shdec}\lebmeas.\left(
        \gamma^{-1}(A_x\cap B(x,r))
      \right)\,dQ_{x,r}(\gamma)\\
      &+ \int_{\Gamma^{c}\shdec}\lebmeas.\left(
        \gamma^{-1}(A_x\cap B(x,r))
      \right)\,dQ_{x,r}(\gamma)\\
      &\le\varepsilon\shdec r\|Q_{x,r}\|\\
      &+\int_{\Gamma^{c}\shdec}\lebmeas.\left(
        \gamma^{-1}(A_x\cap B(x,r))
      \right)\,dQ_{x,r}(\gamma)\\
      &\le\varepsilon\shdec \mu(B(x,2r))\\
      &+\int_{\Gamma^{c}\shdec}\lebmeas.\left(
        \gamma^{-1}(A_x\cap B(x,r))
      \right)\,dQ_{x,r}(\gamma);
    \end{split}
  \end{equation}
  combining~(\ref{eq:weak_grad_indep_p7}),
  (\ref{eq:weak_grad_indep_p8}) we conclude that for
  $\varepsilon\shdec$ sufficiently small:
  \begin{equation}
    \label{eq:weak_grad_indep_p9}
    \int_{\Gamma^{c}\shdec}\lebmeas.\left(
      \gamma^{-1}(A_x\cap B(x,r))
    \right)\,dQ_{x,r}(\gamma)>0.
  \end{equation}
  Using a measurable selection principle (see
  \cite[Chap.~18]{kechris_desc}) 
  we can find an Alberti representation $[Q_{A_x},1]$ of a
  measure $\nu\ll\mu$ such that:
  \begin{description}
  \item[(Test1)] $Q_{A_x}$ is a finite Radon measure,
    $\frac{d\nu}{d\mu}\le 1$;
  \item[(Test2)] For $Q_{A_x}$-a.e.~$\gamma$, letting $I=[0,3r]$ we
    have for each $t\in[0,r]$:
    \begin{equation}
      \label{eq:weak_grad_indep_p10}
      \begin{aligned}
        I+t&\subset\dom\gamma\\
        \lebmeas.\left(
          \gamma^{-1}\left(
            A_x\cap B(x,r)
          \right) \cap (I+t)
        \right)&>0;
      \end{aligned}
    \end{equation}
  \item[(Test3)] $Q_{A_x}$-a.e.~$\gamma$ is a unit-speed geodesic
    segment of length at most $6r$.
  \end{description}
  Using Lemma~\ref{lem:test_plan_arep} (and setting $\varepsilon=r$) we can associate to
  $[Q_{A_x}/\|Q_{A_x}\|,1]$ a $q$-test plan $\pi$.
  \par\noindent\texttt{Step 3: Applying Lebesgue's differentiation
    along curves.}
  \par As $g\in L^p(\mu)$, conditions
  \textbf{(Test1)}--\textbf{(Test3)} imply
  that for $\pi$-a.e.~$\gamma$ there is a nondegenerate interval
  $J_\gamma\subset\dom\gamma$ such that $g\circ\gamma\in L^1(\lebmeas.\on J_\gamma)$
  and $J_\gamma$ meets $\gamma^{-1}(A_x)$ in positive Lebesgue
  measure. Moreover, for $\pi$-a.e.~$\gamma$ for each $(a,b)\subset
  J_\gamma$ we also have that:
  \begin{equation}
    \label{eq:weak_grad_indep_p11}
    \left|
      f\circ\gamma(b) - f\circ\gamma(b) 
    \right| \le\int_a^bg\circ\gamma\,d\lebmeas.;
  \end{equation}
  if $t_0\in J_\gamma$ is an interior point of $J_\gamma$, a differentiability point of $f\circ\gamma$,
  and a Lebesgue point of $\gamma^{-1}(A_x)$, applying Lebesgue's
  differentiation at $t_0$ we
  obtain:
  \begin{equation}
    \label{eq:weak_grad_indep_p12}
    g(\gamma(t_0))\ge\langle df(\gamma(t_0)),v\rangle,
  \end{equation}
  which implies
  \begin{equation}
    \label{eq:weak_grad_indep_p13}
    g(x)\ge 1-2\varepsilon\condec.
  \end{equation}
\end{proof}
\begin{rem}
  \label{rem:bv}
  Theorem~\ref{thm:weak_grad_indep} has a counterpart in the category
  of functions of bounded variations; i.e.~one can show that in a
  regular differentiability space the \textbf{total variation measure}
  of a Lipschitz function $f$ coincides with $\biglip f\cdot\mu$; we omit the details
  because they are mainly technical and do not add much mathematical
  substance to the paper.
\end{rem}
\bibliographystyle{alpha}
\bibliography{dim_blow_biblio}
\end{document}